\documentclass[10pt,reqno]{amsart}

\usepackage[dvipsnames]{xcolor}
\usepackage{amsmath,amssymb,amsthm,amsfonts,enumerate,tikz,bm}
\usepackage{mathrsfs}
\usetikzlibrary{matrix,arrows,positioning,calc}
\usepackage[all]{xy}
\usepackage[bookmarksnumbered,colorlinks]{hyperref}
\usepackage{url}
\numberwithin{equation}{section}
\usepackage{graphicx}
\usepackage[T1]{fontenc}
\usepackage{textcomp}
\usepackage{palatino,helvet}

\usepackage{footmisc}
\usepackage{float}
\usepackage{caption}

\newtheorem{definition}{Definition}[section]
\newtheorem{remark}[definition]{Remark}
\newtheorem{example}[definition]{Example}
\newtheorem{theorem}[definition]{Theorem}
\newtheorem{proposition}[definition]{Proposition}
\newtheorem{lemma}[definition]{Lemma}
\newtheorem{corollary}[definition]{Corollary}
\newtheorem{question}[definition]{Question}
\theoremstyle{remark}
\numberwithin{equation}{section}

\usepackage{paralist}

\usepackage{scrextend}
\deffootnote{2em}{0em}{\thefootnotemark\quad}

\newcommand{\Rmod}{{\rm Mod}(R)}
\newcommand{\modR}{{\rm Mod}(R^{\rm op})}
\newcommand{\Rch}{{\rm Ch}({\rm Mod}(R))}
\newcommand{\chR}{{\rm Ch}({\rm Mod}(R^{\rm op}))}

\newcommand{\pd}{{\rm pd}}
\newcommand{\id}{{\rm id}}
\newcommand{\fd}{{\rm fd}}

\newcommand{\Hom}{{\rm Hom}}
\newcommand{\Ext}{{\rm Ext}}
\newcommand{\Tor}{{\rm Tor}}

\makeatletter
\def\@seccntformat#1{%
  \protect\textup{\protect\@secnumfont
    \ifnum\pdfstrcmp{section}{#1}=0 \scshape\bfseries\fi% section # in \scshape and \bfseries
    \ifnum\pdfstrcmp{subsection}{#1}=0 \bfseries\fi% subsection # in \bfseries
    \csname the#1\endcsname
    \protect\@secnumpunct
  }%
}
\makeatother

\begin{document}

\title{$\bm{n}$-Absolutely pure and $\bm{n}$-flat modules, revisited}
\thanks{2020 MSC: 13D07; 13C11}
\thanks{Key Words: $n$-absolutely pure module, $n$-flat module, $n$-coherent rings, cotorsion pairs, duality pairs}

\author{Rafael Parra}
\address[R. Parra]{Instituto de Matem\'atica y Estad\'istica ``Prof. Ing. Rafael Laguardia''. Facultad de Ingenier\'ia. Universidad de la Rep\'ublica. CP11300. Montevideo, URUGUAY}
\email{rparra@fing.edu.uy}

\author{Marco A. P\'erez}
\address[M. A. P\'erez]{Instituto de Matem\'atica y Estad\'istica ``Prof. Ing. Rafael Laguardia''. Facultad de Ingenier\'ia. Universidad de la Rep\'ublica. CP11300. Montevideo, URUGUAY}
\email{mperez@fing.edu.uy}

\maketitle

\begin{abstract}
We revisit the concepts of $n$-absolutely pure and $n$-flat modules, and their relation with $n$-coherent rings in the sense of Lee. We provide new proofs for some known claims about these modules and rings, and show other new properties. We also give some corrections to the existing literature.  
%In the paper ``$n$-Flat and $n$-FP-injective modules'', the authors claim that an $R$-module $M$ is $n$-FP-injective if and only if $\Ext^1_R(R/I,M) = 0$ for any finitely generated left ideal $I$ with projective dimension at most $n-1$. We point out a mistake in their proof. On the other hand, we are not aware if this assertion is true, but we can at least assert its validity in the case where $R$ is left coherent. Other mistakes in this work are also detected and fixed. 
\end{abstract}

%\setcounter{tocdepth}{2}
%\tableofcontents

\pagestyle{myheadings}
\markboth{\rightline {\scriptsize R. Parra and M. A. P\'{e}rez}}
         {\leftline{\scriptsize $n$-Absolutely pure and $n$-flat modules, revisited}}

%%%%%%%%%%%%%%%%%%%%%%%%%%%%%%%%%%%%
%%%%%%%%%%%%%%%%%%%%%%%%%%%%%%%%%%%%
%%%%%%%%%%%%%%%%%%%%%%%%%%%%%%%%%%%%
%%%%%%%%%%%%%%%%%%%%%%%%%%%%%%%%%%%%

\setcounter{tocdepth}{1}
\tableofcontents

%%%%%%%%%%%%%%%%%%%%%%%%%%%%%%%%%%%%%
%%%%%%%%%%%%%%%%%%%%%%%%%%%%%%%%%%%%%
%%%%%%%%%%%%%%%%%%%%%%%%%%%%%%%%%%%%%
%%%%%%%%%%%%%%%%%%%%%%%%%%%%%%%%%%%%%

\section*{Introduction}         

Among the existing generalizations of coherent rings, one has the family of left $n$-coherent rings introduced by Lee in \cite{Lee}. These are defined as those rings $R$ over which if $K$ is a finitely generated submodule of a finitely generated and free left $R$-module with projective dimension $\leq n-1$, then $K$ is finitely presented. Here, $n$ is a positive integer or $n = \infty$. In particular, every ring is left $1$-coherent, and left coherent rings coincide with left $\infty$-coherent rings. 

The roles of absolutely pure (a.k.a. FP-injective) left $R$-modules and flat right $R$-modules are very well known in the study of left coherent rings. The absolutely pure and flat analogs in the study of left $n$-coherent rings are the $n$-absolutely pure and $n$-flat modules, also introduced in \cite{Lee}. The former are defined as the right orthogonal complement of $\mathsf{FP}^{\leq n}$ under Ext, and the latter as the left orthogonal complement of $\mathsf{FP}^{\leq n}$ under Tor, where $\mathsf{FP}^{\leq n}$ denotes the class of finitely presented left $R$-modules with projective dimension $\leq n$. These classes of modules, denoted by $\mathsf{A}_n$ and $\mathsf{F}_n$, are useful to provide equivalent descriptions of left $n$-coherent rings. For instance, it is proved in \cite{Lee} that $R$ is $n$-coherent if, and only if, direct products of $n$-flat right $R$-modules are $n$-flat. This equivalence is further extended by Yang and Liu in \cite{YangLiu11}. Specifically, they show that the following assertions are equivalent:
\begin{itemize}
\item $R$ is left $n$-coherent;

\item $\mathsf{F}_n$ is preenveloping;

\item $\mathsf{A}_n$ is closed under direct limits;

\item $\mathsf{A}_n$ is coresolving.
\end{itemize}

In the present paper we revisit the concepts of $n$-coherent rings, $n$-absolutely pure and $n$-flat modules, and the relation between them. In reviewing the literature, we note that certain arguments in some proofs of \cite{Lee,YangLiu11} are incomplete. Our main objective is to provide alternative proofs for most of the affected results in these references, thereby ensuring their validity. In particular, we confirm that the equivalent descriptions of left $n$-coherent rings mentioned above remain correct. By employing relatively recent techniques from the theories of duality pairs and cotorsion pairs, we not only recover these important results on $n$-coherency but also extend several of them. Our main contribution is an extension of the characterization of $n$-coherent rings, by adding more properties for the classes $\mathsf{FP}^{\leq n}$, $\mathsf{A}_n$ and $\mathsf{F}_n$ of modules over a ring $R$ that hold if, and only if, $R$ is $n$-coherent. Some of our findings are also related to a couple of open questions in the field, that lead us to define a new subclass of $n$-coherent rings and expansions of the classes $\mathsf{A}_n$ and $\mathsf{F}_n$, which turn out to be of interest by themselves. Concerning examples, we show some methods that allow us to generate more instances of $n$-absolutely pure and $n$-flat modules and chain complexes.

%%%%%%%%%%%%%%%%%%%%%%%%%%%%%%%%%%%%%
%%%%%%%%%%%%%%%%%%%%%%%%%%%%%%%%%%%%%
%%%%%%%%%%%%%%%%%%%%%%%%%%%%%%%%%%%%%
%%%%%%%%%%%%%%%%%%%%%%%%%%%%%%%%%%%%%

\subsection*{Organization}       

In Section \ref{sec:prelims} we recall the necessary preliminary notions on coherent rings, cotorsion pairs and duality pairs. 

Section \ref{sec:general_properties} is devoted to display the general properties of the classes $\mathsf{A}_n$ and $\mathsf{F}_n$. Specifically, in Proposition \ref{prop:basic_properties} we show that $(\mathsf{F}_n,\mathsf{A}_n)$ is always a perfect duality pair. In particular, this implies that $(\mathsf{F}_n,[\mathsf{F}_n]^{\perp_1})$ is a perfect cotorsion pair, and so that every right $R$-module has an $n$-flat cover. The class $\mathsf{A}_n$ is also the right half of a complete cotorsion pair, which yields the existence of special $n$-absolutely pure preenvelopes. We also define the classes of Baer $n$-absolutely pure and Baer $n$-flat modules as expansions of $\mathsf{A}_n$ and $\mathsf{F}_n$. These new classes have the same general properties of $\mathsf{A}_n$ and $\mathsf{F}_n$, and will be important to study the subclass of left $n$-coherent rings given by those rings $R$ in which every finitely generated left ideal with projective dimension $\leq n-1$ is finitely presented. These rings will be called \emph{weakly} left $n$-coherent. 

In Section \ref{sec:relations} we explore the relation between left $n$-coherent rings, $n$-absolutely pure left $R$-modules and $n$-flat right $R$-modules. Our main result is Theorem \ref{theo1}, where we extend the characterizations of $n$-coherent rings given in \cite{Lee,YangLiu11}. We give alternative proofs to the arguments in \cite{Lee,YangLiu11}, and show new characterizations of $n$-coherency. For instance, we prove that $R$ is left $n$-coherent if, and only if, $\mathsf{A}_n$ is $n$-tilting (or equivalently, $\mathsf{F}_n$ is $n$-cotilting). 

A characterization of weakly $n$-coherent rings analogous to Theorem \ref{theo1} is stated and proved in Section \ref{sec:ideals}, and certain differences between these two characterizations will be worth mentioning. On the other hand, we address the question of whether $n$-coherent and weakly $n$-coherent rings are the same class of rings. Indeed, this is the case for $n = \infty$. We give a partial answer to this question in Corollary \ref{cor:when_weak_is_strong}. This result, on the other hand, is a consequence of Proposition \ref{prop:An_vs_IAn}, which is related to an open question posed in \cite{Shen11}, where the author asserts that it is unknown whether a right $f$-injective ring $R$ is right FP-injective or not.

The existence of covers and preenvelopes by the classes of (Baer) $n$-absolutely pure and (Baer) $n$-flat modules is further explored in Section \ref{sec:approximations}. We show for instance in Proposition \ref{prop:ump} that the existence of (Baer) $n$-absolutely pure covers with the unique mapping property for any $R$-module is equivalent to the fact that the class of (Baer) $n$-absolutely pure $R$-modules has cokernels. Another noteworthy property is that the existence of monic $n$-absolutely pure covers only holds for $n = 1$ (see Proposition \ref{prop:injectives_closed_under_quotients}). 

Finally, Section \ref{sec:examples} is devoted to the construction of new $n$-absolutely pure and $n$-flat modules from old ones. For example, we give sufficient conditions that a ring homomorphism $R \to S$ needs to fulfill so that $n$-absolutely purity and $n$-flatness can be transferred in certain cases between $R$-modules and $S$-modules via extension, restriction and coextension of scalars. We also achieve some descriptions of $n$-absolutely pure and $n$-flat modules over triangular matrix rings. In the last part of this section, we propose the concept of $n$-absolutely pure and $n$-flat chain complexes, and show that a chain complex is $n$-absolutely pure (resp., $n$-flat) if, and only if, all of its module components are $n$-absolutely pure (resp., $n$-flat).

%%%%%%%%%%%%%%%%%%%%%%%%%%%%%%%%%%%%%
%%%%%%%%%%%%%%%%%%%%%%%%%%%%%%%%%%%%%
%%%%%%%%%%%%%%%%%%%%%%%%%%%%%%%%%%%%%
%%%%%%%%%%%%%%%%%%%%%%%%%%%%%%%%%%%%%

\section{Preliminaries}\label{sec:prelims}

Given an associative ring $R$ with identity, we denote by ${\rm Mod}(R)$ the category of (unital) left $R$-modules. By ``$R$-modules'' we shall mean ``left $R$-modules'', while right $R$-modules will be $R^{\rm op}$-modules, where $R^{\rm op}$ denotes the opposite ring. The category of right $R$-modules will be denoted by ${\rm Mod}(R^{\rm op})$. The \emph{projective}, \emph{injective} and \emph{flat} dimensions of a left (or right) $R$-module $M$ are denoted by $\pd(M)$, $\id(M)$ and $\fd(M)$, respectively. The \emph{Pontryagin dual} or \emph{character module} of $M$ is defined by $M^+ := \Hom_{\mathbb{Z}}(M,\mathbb{Q/Z})$. This construction defines an exact contravariant functor $(-)^+ \colon {\rm Mod}(R) \to {\rm Mod}(R^{\rm op})$ (or $(-)^+ \colon {\rm Mod}(R^{\rm op}) \to {\rm Mod}(R)$).

Lee in \cite{Lee} introduced the following class of rings.

\begin{definition}\label{def:Lee-n-coherent}
Given $n$ a positive integer or $n = \infty$, a ring $R$ is \textbf{(left) $\bm{n}$-coherent} if every finitely generated submodule of a finitely generated and free $R$-module with projective dimension at most $n-1$ is finitely presented. \\
\end{definition}

The reader must we warned that there is a namesake concept defined in \cite{Costa94}. According to Costa, a ring $R$ is (left) $n$-coherent\footnote{Costa defines this notion for commutative rings, but this assumption can be dropped if one is interested in general properties and relative homological algebra associated to these rings (see for instance \cite{BP17}).} if every $R$-module with a finite $n$-presentation has a finite $(n+1)$-presentation. Given $M \in \Rmod$, a \emph{finite $n$-presentation} of $M$ is an exact sequence
\[
P_n \to P_{n-1} \to \cdots \to P_1 \to P_0 \to M \to 0
\]
where each $P_k$ is finitely generated and free. From now on, $n$-coherency will be considered in the sense of \cite{Lee} (and the rings satisfying Definition \ref{def:Lee-n-coherent} will be called \emph{Lee $n$-coherent}), although in some occasions we shall consider the class $\mathsf{FP}_2$ of $R$-modules admitting a finite $2$-presentation, as in the following remark.

\begin{remark}\label{rmk:FP2}
A ring $R$ is a Lee $n$-coherent if, and only if, every finitely presented $R$-module with projective dimension $\leq n$ belongs to $\mathsf{FP}_2$. This can be deduced directly from Definition \ref{def:Lee-n-coherent} and \cite[Thm. 2.1.2]{Glaz89}. \\
\end{remark}

\begin{example} ~\
\begin{enumerate}[(1)]
\item Every ring is Lee $1$-coherent. 

\item Coherent rings are exactly those which are Lee $n$-coherent, where $n$ is the (left) global dimension of $R$. 

\item Every Lee $\infty$-coherent ring is left coherent.

\item Let $k$ be a field, and consider the local ring $R := k[x_1,x_2,\dots] / \mathfrak{m}^2$ where $\mathfrak{m}$ is the maximal ideal $\mathfrak{m} = (x_1, x_2, \dots)$. From \cite[Ex. 2.2]{ChristensenEstradaThompson26}, we have that $R$ is a non coherent ring over which non projective $R$-modules have infinite projective dimension. Then, every finitely presented $R$-module with projective dimension $\leq n$ (with $n < \infty$) must be projective, and hence in $\mathsf{FP}_2$. By the previous remark, we have that $R$ is Lee $n$-coherent for every positive integer $n$ (with $n < \infty$). 
\end{enumerate}
\end{example}

From these examples we have that, in particular, $n$-coherency (with $n$ finite) does not necessarily imply coherency. 

For the purposes of this paper, the following class of rings will be also important.

\begin{definition}\label{def:weakly-Lee-n-coherent}
Given $n$ a positive integer or $n = \infty$, a ring $R$ is \textbf{weakly Lee (left) $\bm{n}$-coherent} if every finitely generated (left) ideal with projective dimension at most $n-1$ is finitely presented.
\end{definition}

%%%%%%%%%%%%%%%%%%%%%%%%%%%%%%%%%%%%
%%%%%%%%%%%%%%%%%%%%%%%%%%%%%%%%%%%%

\subsection*{Cotorsion pairs}

Two classes $\mathcal{A,B} \subseteq {\rm Mod}(R)$ form a \emph{cotorsion pair} $(\mathcal{A,B})$ if
\begin{align*}
\mathcal{A} & = {}^{\perp_1}\mathcal{B} := \{ M \in {\rm Mod}(R) \ \text{:} \ \text{Ext}^1_R(M,B) = 0 \text{ for every } B \in \mathcal{B} \}, \\
\mathcal{B} & = \mathcal{A}^{\perp_1} := \{ N \in {\rm Mod}(R) \ \text{:} \ \text{Ext}^1_R(A,N) = 0 \text{ for every } A \in \mathcal{A} \}.
\end{align*}
Moreover, if $\text{Ext}^i(A,B) = 0$ for every $A \in \mathcal{A}$, $B \in \mathcal{B}$ and $i \geq 1$, then the cotorsion pair $(\mathcal{A,B})$ is said to be \emph{hereditary}. This is equivalent to saying that $\mathcal{A}$ is closed under kernels of epimorphisms between objects in $\mathcal{A}$ (or that $\mathcal{A}$ is closed under epikernels, for short), or that $\mathcal{B}$ is closed under monocokernels. %If $\mathsf{Inj}(R)$ and $\mathsf{Proj}(R)$ denote the classes of injective and projective $R$-modules
Furthermore, recall that a class is \emph{resolving} (resp., \emph{coresolving}) if it is closed under extensions, epikernels and contains the class of projective $R$-modules (resp., closed under extensions, monocokernels and contains the class of injective $R$-modules). Then, it is clear that a cotorsion pair $(\mathcal{A,B})$ is hereditary if, and only if, $\mathcal{A}$ is resolving (or equivalently, $\mathcal{B}$ is coresolving). 

A cotorsion pair $(\mathcal{A,B})$ is \emph{cogenerated} by $\mathcal{S} \subseteq {\rm Mod}(R)$ if $\mathcal{B} = \mathcal{S}^{\perp_1}$. If $\mathcal{S}$ is a set, then Eklof and Trlifaj's \cite[Thm. 10]{ET01} asserts that $(\mathcal{A,B})$ is \emph{complete}, which means that for every $M \in {\rm Mod}(R)$ there are exact sequences $0 \to B' \to A \to M \to 0$ and $0 \to M \to B \to A' \to 0$ with $A, A' \in \mathcal{A}$ and $B, B' \in \mathcal{B}$. The morphisms $A \to M$ and $M \to B$ are particular examples of approximations. Indeed, given a class $\mathcal{X} \subseteq {\rm Mod}(R)$ and $M \in {\rm Mod}(R)$, an $R$-homomorphism $\varphi \colon X \to M$ with $X \in \mathcal{X}$ is an \emph{$\mathcal{X}$-precover} (or a \emph{right $\mathcal{X}$-approximation}) of $M$ if for every $R$-homomorphism $\varphi' \colon X' \to M$ with $X' \in \mathcal{X}$ there exists an $R$-homomorphism $h \colon X' \to X$ such that $\varphi' = \varphi \circ h$. An $\mathcal{X}$-precover $\varphi \colon X \to M$ is an \emph{$\mathcal{X}$-cover} if for the case $X' = X$ and $\varphi' = \varphi$, the equality $\varphi = \varphi \circ h$ can only be completed by automorphisms of $X$. Finally, an $\mathcal{X}$-precover $\varphi \colon X \to M$ is \emph{special} if $\varphi$ is epic and ${\rm Ker}(\varphi) \in \mathcal{X}^{\perp_1}$. The concepts of (\emph{special}) (\emph{pre})\emph{envelopes} are dual.

%%%%%%%%%%%%%%%%%%%%%%%%%%%%%%%%%%%%
%%%%%%%%%%%%%%%%%%%%%%%%%%%%%%%%%%%%

\subsection*{Duality pairs}

These pairs were introduced by Holm and J{\o}rgensen in \cite{HJ09}. Concretely, two classes $\mathcal{M} \subseteq {\rm Mod}(R)$ and $\mathcal{C} \subseteq {\rm Mod}(R^{\rm op})$ form a \emph{duality pair} $(\mathcal{M,C})$ \emph{over $R$} if:
\begin{enumerate}
\item $M \in \mathcal{M}$ if, and only if, $M^+ \in \mathcal{C}$. 

\item $\mathcal{C}$ is closed under direct summands and finite coproducts. 
\end{enumerate}
Moreover, by \cite[Thm. 3.1]{HJ09} $\mathcal{M}$ is closed under pure submodules, pure quotients and pure extensions. One has a similar notion of duality pair in the case $\mathcal{M}$ is a class of $R^\textrm{op}$-modules, and $\mathcal{C}$ a class of $R$-modules. In this case one says that $(\mathcal{M,C})$ is a duality pair over $R^{\rm op}$. A duality pair $(\mathcal{M,C})$ over $R$ is called:
\begin{itemize}
\item \emph{(co)product closed} if $\mathcal{M}$ is closed under (co)products. 

\item \emph{perfect} if it is coproduct closed, $\mathcal{M}$ is closed under extensions and contains $R$ (regarded as an $R$-module). 
\end{itemize}
The previous types of duality pairs are important since they induce approximations by the classes $\mathcal{M}$ and $\mathcal{C}$. Indeed, it was proven in \cite[Thm. 3.1]{HJ09} that:
\begin{itemize}
\item If $(\mathcal{M,C})$ is coproduct closed, then $\mathcal{M}$ is covering.

\item If $(\mathcal{M,C})$ is product closed, then $\mathcal{M}$ is preenveloping.

\item If $(\mathcal{M,C})$ is perfect, then $(\mathcal{M},\mathcal{M}^{\perp_1})$ is a \emph{perfect cotorsion pair} (that is, a cotorsion pair such that $\mathcal{M}$ is covering and $\mathcal{M}^{\perp_1}$ is enveloping).
\end{itemize}
Duality pairs were also studied by Bravo, Hovey and Gillespie in \cite{BGH14,GillespieDuality}. We point out \cite[Prop. 2.3]{GillespieDuality}, which asserts that the containments $\mathsf{P} \subseteq \mathcal{M}$ and $\mathsf{I} \subseteq \mathcal{C}$ hold for every perfect duality pair $(\mathcal{M,C})$ over $R$, where $\mathsf{P}$ and $\mathsf{I}$ denote the classes of projective $R$-modules and injective $R^\textrm{op}$-modules, respectively. Furthermore, for such pairs the class $\mathcal{M}$ is closed under direct limits and so the class $\mathsf{F}$ of flat $R$-modules is contained in $\mathcal{M}$ (by Lazard-Govorov's Theorem \cite[Thm. 4.34]{Lam99}). 

In \cite[Appx. A]{BGH14} and \cite[Def. 2.4]{GillespieDuality}, a duality pair $(\mathcal{M,C})$ over $R$ is called \emph{symmetric} if $(\mathcal{C,M})$ is a duality pair over $R^{\rm op}$. A \emph{complete} duality pair is a symmetric duality pair $(\mathcal{M,C})$ such that $(\mathcal{M,C})$ is a perfect duality pair.

%%%%%%%%%%%%%%%%%%%%%%%%%%%%%%%%%%%%%
%%%%%%%%%%%%%%%%%%%%%%%%%%%%%%%%%%%%%
%%%%%%%%%%%%%%%%%%%%%%%%%%%%%%%%%%%%%
%%%%%%%%%%%%%%%%%%%%%%%%%%%%%%%%%%%%%

\section{General properties of $n$-absolutely pure and $n$-flat modules}\label{sec:general_properties}

In what follows, we denote by $\mathsf{FP}^{\leq n}$ the class of finitely presented $R$-modules with projective dimension at most $n \geq 1$. Thus, the classes of $n$-absolutely pure $R$-modules and $n$-flat $R^{\rm op}$-modules (introduced in \cite{Lee}), denoted by $\mathsf{A}_n$ and $\mathsf{F}_n$, respectively, are given by:
\begin{align*}
\mathsf{A}_n & = (\mathsf{FP}^{\leq n})^{\perp_1} = \{ M \in {\rm Mod}(R) \ \text{ : } \ \Ext^1_R(F,M) = 0 \text{ for every } F \in \mathsf{FP}^{\leq n} \}, \\
\mathsf{F}_n & = {}^{\top_1}(\mathsf{FP}^{\leq n}) = \{ N \in {\rm Mod}(R^{\rm op}) \ \text{ : } \ \Tor^1_R(N,F) = 0 \text{ for every } F \in \mathsf{FP}^{\leq n} \}. \\
\end{align*}

\begin{example} ~\
\begin{enumerate}[(1)]
\item Injective and FP-injective (a.k.a. absolutely pure) $R$-modules are $n$-absolutely pure. Similarly, projective and flat $R^{\rm op}$-modules are $n$-flat.

\item FP-injective $R$-modules coincide with the $\infty$-absolutely pure $R$-modules, and flat $R^{\rm op}$-modules coincide with $\infty$-flat $R$-modules.

\item If R is a commutative domain, then an $R$-module is $1$-flat (resp., $1$-absolutely pure) if and only if it is torsion free (resp., divisible). See \cite[Lems. 1 \& 3]{Lee}.
\end{enumerate}
\end{example}

In \cite[Lem. 5]{Lee}, the author shows that $N \in \mathsf{F}_n$ if, and only if, $N^+ \in \mathsf{A}_n$. Moreover, it is clear from their definitions that $\mathsf{A}_n$ is closed under isomorphisms, direct summands, pure submodules, and finite coproducts, and that $\mathsf{F}_n$ is closed under isomorphisms, coproducts, extensions and that $R \in \mathsf{F}_n$. Hence, we have the following.

\begin{proposition}\label{prop:basic_properties}
The following assertions hold:
\begin{enumerate}[(1)]
\item $(\mathsf{F}_n,\mathsf{A}_n)$ is a perfect duality pair. 
\item $(\mathsf{F}_n,[\mathsf{F}_n]^{\perp_1})$ is a perfect cotorsion pair. In particular, every $R^{\rm op}$-module has an $n$-flat cover. 
\item $\mathsf{F}_n$ is closed under pure submodules, pure quotients and pure extensions. 
\item $({}^{\perp_1}[\mathsf{A}_n],\mathsf{A}_n)$ is a cotorsion pair cogenerated by a set. In particular, every $R$-module has a $n$-absolutely pure special preenvelope. 
\item $\mathsf{A}_n$ is closed under arbitrary coproducts. 

\item ${}^{\perp_1}[\mathsf{A}_n] \subseteq \mathsf{P}^{\leq n}$, where $\mathsf{P}^{\leq n}$ denotes the class of $R$-modules with projective dimension at most $n$. 

\item $[\mathsf{F}_n]^{\perp_1} \subseteq \mathsf{I}^{\leq n}$, where $\mathsf{I}^{\leq n}$ denotes the class of $R^{\rm op}$-modules with injective dimension at most $n$. \\
\end{enumerate}
\end{proposition}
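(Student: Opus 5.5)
Item (1) is essentially a matter of assembling the facts stated just before the proposition. The duality condition ``$N \in \mathsf{F}_n$ iff $N^+ \in \mathsf{A}_n$'' is \cite[Lem. 5]{Lee}. It can be recovered from the standard isomorphism $\Ext^1_R(F,N^+) \cong \Tor^R_1(N,F)^+$, valid for any module $F$, together with the faithfulness of $(-)^+$. The class $\mathsf{A}_n$ is closed under direct summands and finite coproducts, because $\Ext^1_R(F,-)$ commutes with finite direct sums. For perfectness we need three things about $\mathsf{F}_n$:
\begin{itemize}
\item it is closed under coproducts, since $\Tor$ commutes with direct sums;
\item it is closed under extensions, by the long exact Tor sequence;
\item it contains $R$, because $R$ is flat.
\end{itemize}
Hence $(\mathsf{F}_n,\mathsf{A}_n)$ is a perfect duality pair over $R^{\rm op}$.

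Items (2) and (3) then follow directly from \cite[Thm. 3.1]{HJ09}, applied to the perfect duality pair from (1). That theorem gives that $(\mathsf{F}_n,\mathsf{F}_n^{\perp_1})$ is a perfect cotorsion pair, so $n$-flat covers exist. It also gives that $\mathsf{F}_n$ is closed under pure submodules, pure quotients and pure extensions.

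For (4), I would choose a set $\mathcal{S}$ of representatives of the isomorphism classes of modules in $\mathsf{FP}^{\leq n}$. This is a set, since every finitely presented module is a quotient of some $R^m$ by a finitely generated submodule. By definition $\mathsf{A}_n = \mathcal{S}^{\perp_1}$, so $({}^{\perp_1}\mathsf{A}_n,\mathsf{A}_n)$ is the cotorsion pair cogenerated by $\mathcal{S}$. By \cite[Thm. 10]{ET01} this pair is complete, and completeness gives the special preenvelopes.

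For (5), each $F \in \mathsf{FP}^{\leq n}$ is finitely presented, so it has a projective resolution whose first two terms are finitely generated free modules. Hence $\Hom_R(F,-)$ and $\Ext^1_R(F,-)$ commute with arbitrary direct sums. I expect this to be the only point needing care. One route is to compute $\Ext^1_R(F,M)$ via a presentation $0 \to K \to R^m \to F \to 0$ with $K$ finitely generated, as the cokernel of $\Hom(R^m,M) \to \Hom(K,M)$. Since both $\Hom(R^m,-)$ and $\Hom(K,-)$ commute with coproducts when $K$ is finitely generated, so does the cokernel. Alternatively, the coproduct of FP-injective-type objects can be seen as a pure submodule of the product, and $\mathsf{A}_n$ is closed under products and pure submodules.

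For (6), note that $\mathcal{S} \subseteq \mathsf{P}^{\leq n}$. By Eklof--Trlifaj, every module in ${}^{\perp_1}\mathsf{A}_n$ is a direct summand of an $\mathcal{S}\cup\{R\}$-filtered module. The class $\mathsf{P}^{\leq n}$ is closed under transfinite extensions (by Eklof's lemma, since $\mathsf{P}^{\leq n} = {}^{\perp_1}(\text{$n$-th cosyzygies})$) and under direct summands, so the inclusion follows. A more direct argument is also available. Let $X \in {}^{\perp_1}\mathsf{A}_n$ and let $C$ be any module with an injective coresolution whose $n$-th cosyzygy is $\Omega^{-n}C$. Then $\Omega^{-n}C$ is in $\mathsf{A}_n$, because $\Ext^1(F,\Omega^{-n}C) \cong \Ext^{n+1}(F,C) = 0$ for $\pd F \le n$. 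Hence $\Ext^{n+1}(X,C) \cong \Ext^1(X,\Omega^{-n}C) = 0$, and so $\pd X \le n$.

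Item (7) is dual. For $N$ any right module, the $n$-th syzygy $\Omega^n N$ lies in $\mathsf{F}_n$ by dimension shifting in Tor. So for $Y \in \mathsf{F}_n^{\perp_1}$ we get $\Ext^{n+1}(N,Y) \cong \Ext^1(\Omega^n N,Y) = 0$, which gives $\id Y \le n$.
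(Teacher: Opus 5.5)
Your proposal is correct. For (1)--(4) it follows the paper's proof. Your (7) is a slightly streamlined version of the paper's argument: you apply the Tor dimension shift directly to $\mathsf{F}_n$ instead of passing through ${}^{\top_1}[\mathsf{F}^{\leq n}]$.

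The differences are in (5) and (6), where the paper only cites \cite[Prop. 2.10]{BOPP22} and the cotorsion-pair facts in \cite{EJ00}. You instead give self-contained arguments. For (5) you use the cokernel description of $\Ext^1_R(F,-)$ with a finitely generated syzygy $K$. For (6) you use the cosyzygy dimension shift, which is exactly the dual of the paper's proof of (7).
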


\begin{proof} ~\
\begin{itemize}
\item Parts (1), (2) and (3) follow from the previous comments and the facts on duality pairs mentioned in the preliminaries. 

\item For part (4), in the equality $\mathsf{A}_n = (\mathsf{FP}^{\leq n})^{\perp_1}$ note that $\mathsf{FP}^{\leq n}$ is a subclass of the class of finitely generated $R$-modules, and so it is skeletally small. Eklof and Trlifaj theorem \cite[Thm. 10]{ET01} implies then that the cotorsion pair $({}^{\perp_1}[\mathsf{A}_n],\mathsf{A}_n)$ is complete. 

\item Part (5) follows by \cite[Prop. 2.10]{BOPP22}. 

\item For part (6), the containment $\mathsf{FP}^{\leq n} \subseteq \mathsf{P}^{\leq n}$ implies that ${}^{\perp_1}[\mathsf{A}_n] \subseteq \mathsf{P}^{\leq n}$ (see \cite[comments right after Def. 7.1.1 \& Thm. 7.4.6]{EJ00}).

\item Finally, for part (7), let $\mathsf{F}^{\leq n}$ denote the class of $R^{\rm op}$-modules with flat dimension at most $n$. Thus, note that 
\[
\ \ \ \ \ \ \ \ \ \ \ \ \mathsf{FP}^{\leq n} \subseteq \mathsf{F}^{\leq n} \ \Rightarrow \ {}^{\top_1}[\mathsf{F}^{\leq n}] \subseteq {}^{\top_1}[\mathsf{FP}^{\leq n}] = \mathsf{F}_n \ \Rightarrow \ [\mathsf{F}_n]^{\perp_1} \subseteq ({}^{\top_1}[\mathsf{F}^{\leq n}])^{\perp_1}. 
\]
It suffices to show that $({}^{\top_1}[\mathsf{F}^{\leq n}])^{\perp_1} \subseteq \mathsf{I}^{\leq n}$. So let $L \in ({}^{\top_1}[\mathsf{F}^{\leq n}])^{\perp_1}$ and $U \in \modR$. Denote by $\Omega^n(U)$ any projective $n$-syzygy of $U$, and consider $V \in \mathsf{F}^{\leq n}$. We have that
\[
\Tor^R_1(\Omega^n(U),V) \cong \Tor^R_{n+1}(U,V) = 0,
\] 
that is, $\Omega^n(U) \in {}^{\top_1}[\mathsf{F}^{\leq n}]$, and then
\[
\Ext^{n+1}_R(U,L) \cong \Ext^1_R(\Omega^n(U),L) = 0.
\]
Hence, ${\rm id}(L) \leq n$. 
\end{itemize}
\end{proof}

\begin{remark}\label{obs-initial} ~\
\begin{enumerate}
\item Part (3) of the previous proposition was originally proved in \cite[Prop. 2.1]{YangLiu11}. Our argument provides a shorter proof for this fact.

\item Part (4) was firstly claimed in \cite[Thm. 2.1 (2)]{YangLiu11}. Our argument fixes a gap we have found in the proof provided in  \cite{YangLiu11}, since it depends on \cite[Lem. 2.1 (2)]{YangLiu11}. This result, on the other hand, has also a gap in its proof (see Remark \ref{rmk0} below). 
\end{enumerate}
\end{remark}

It will be of interest to consider the subclass of $\mathsf{FP}^{\leq n}$ formed by its cyclic $R$-modules. Namely, consider the set 
\[
\mathsf{CFP}^{\leq n} := \{ R / I \ \text{ : } \ I \lhd R \text{ is a finitely generated left ideal of $R$ with } \text{pd}(I) \leq n-1 \},
\]
along with the Ext and Tor orthogonal complements\footnote{Not to be confused with \cite{Udhayakumar17}, where the author considers Ext and Tor orthogonal complements of the set $$\{ R / I \ \text{ : } \ I \lhd R \text{ is a finitely presented left ideal of $R$ with } \text{pd}(I) \leq n-1 \},$$ which are called $n$-IFP-injective and $n$-IFP-flat modules.} of $\mathsf{CFP}^{\leq n}$:
\begin{align*}
\mathsf{BA}_n & = (\mathsf{CFP}^{\leq n})^{\perp_1} & & \text{and} & \mathsf{BF}_n & = {}^{\top_1}(\mathsf{CFP}^{\leq n}).
\end{align*}
Modules in $\mathsf{BA}_n$ may be referred to as \emph{Baer $n$-absolutely pure $R$-modules}, while those in $\mathsf{BF}_n$ as \emph{Baer $n$-flat $R^{\rm op}$-modules}. It is clear that $\mathsf{BA}_n$ is closed under isomorphisms, direct summands, pure submodules, and finite coproducts, and that $\mathsf{BF}_n$ is closed under isomorphisms, coproducts, extensions and that $R \in \mathsf{BF}_n$. We have the following analog of Proposition \ref{prop:basic_properties}, which follows by a similar argument.

\begin{proposition}\label{prop:basic_properties_cyclic}
The following assertions hold:
\begin{enumerate}[(1)]
\item $(\mathsf{BF}_n,\mathsf{BA}_n)$ is a perfect duality pair. 

\item $(\mathsf{BF}_n,[\mathsf{BF}_n]^{\perp_1})$ is a perfect cotorsion pair. 

\item $\mathsf{BF}_n$ is closed under pure submodules, pure quotients and pure extensions. 

\item $({}^{\perp_1}[\mathsf{BA}_n],\mathsf{BA}_n)$ is a cotorsion pair cogenerated by a set.

\item $\mathsf{BA}_n$ is closed under arbitrary coproducts. 

\item ${}^{\perp_1}[\mathsf{BA}_n] \subseteq \mathsf{P}^{\leq n}$. 

\item $[\mathsf{BF}_n]^{\perp_1} \subseteq \mathsf{I}^{\leq n}$. \\
\end{enumerate}
\end{proposition}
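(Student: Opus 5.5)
The plan is to mirror the proof of Proposition \ref{prop:basic_properties}, replacing $\mathsf{FP}^{\leq n}$ by the set $\mathsf{CFP}^{\leq n}$. The one ingredient that does not transfer verbatim is the duality statement $N \in \mathsf{BF}_n$ if, and only if, $N^+ \in \mathsf{BA}_n$, the analog of \cite[Lem. 5]{Lee}. I would prove it first.

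Each $R/I \in \mathsf{CFP}^{\leq n}$ is finitely presented, since $I$ is finitely generated. For finitely presented $F$ and any $R^{\rm op}$-module $N$ there is the standard natural isomorphism
\[
\Ext^1_R(F,N^+) \cong \Tor^R_1(N,F)^+ .
\]
It comes from the Hom--tensor adjunction $\Hom_R(-,N^+) \cong (N \otimes_R -)^+$ together with exactness of $(-)^+$; for the degree-one statement one only needs a presentation by finitely generated free modules. Since $\mathbb{Q}/\mathbb{Z}$ is an injective cogenerator, $X^+ = 0$ if and only if $X = 0$. Hence $\Tor^R_1(N,R/I) = 0$ if and only if $\Ext^1_R(R/I,N^+) = 0$, which gives the duality condition.

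With this, part (1) follows. By the remarks preceding the statement, $\mathsf{BA}_n$ is closed under direct summands and finite coproducts, and $\mathsf{BF}_n$ is closed under coproducts and extensions and contains $R$. So $(\mathsf{BF}_n,\mathsf{BA}_n)$ is a perfect duality pair. Parts (2) and (3) then follow from \cite[Thm. 3.1]{HJ09} as recalled in the preliminaries.

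For part (4), $\mathsf{CFP}^{\leq n}$ is a set, indexed by finitely generated left ideals. By \cite[Thm. 10]{ET01}, the cotorsion pair $({}^{\perp_1}[\mathsf{BA}_n],\mathsf{BA}_n)$ it cogenerates is complete. For part (5), I would invoke \cite[Prop. 2.10]{BOPP22} as before. That result should apply because the cogenerating modules are finitely presented, hence of type $\mathsf{FP}_1$, so $\Ext^1_R(R/I,-)$ commutes with coproducts. Concretely, from $0 \to I \to R \to R/I \to 0$ with $I$ finitely generated, $\Ext^1_R(R/I,M)$ is the cokernel of $M \to \Hom_R(I,M)$, and both functors preserve coproducts.

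For part (6), note $\mathsf{CFP}^{\leq n} \subseteq \mathsf{FP}^{\leq n} \subseteq \mathsf{P}^{\leq n}$, since $\pd(R/I) \leq \pd(I)+1 \leq n$. By \cite[Thm. 7.4.6]{EJ00}, ${}^{\perp_1}[\mathsf{BA}_n]$ consists of direct summands of $\mathsf{CFP}^{\leq n}$-filtered modules, and these stay in $\mathsf{P}^{\leq n}$.

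For part (7), I would repeat the argument word for word. From $\mathsf{CFP}^{\leq n} \subseteq \mathsf{F}^{\leq n}$ we get ${}^{\top_1}[\mathsf{F}^{\leq n}] \subseteq \mathsf{BF}_n$, hence $[\mathsf{BF}_n]^{\perp_1} \subseteq ({}^{\top_1}[\mathsf{F}^{\leq n}])^{\perp_1} \subseteq \mathsf{I}^{\leq n}$, where the last containment was shown in the proof of Proposition \ref{prop:basic_properties}.

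The main obstacle is the character-module isomorphism for $\Ext^1$ and $\Tor_1$ against the finitely presented modules $R/I$. It is the only step not copied directly from Proposition \ref{prop:basic_properties}, though it is classical.
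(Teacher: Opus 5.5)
Your proposal is correct and follows essentially the paper's route. The paper proves this proposition by the same argument as Proposition \ref{prop:basic_properties}, with the natural isomorphism $\Ext^1_R(R/I,N^+) \cong [\Tor^R_1(N,R/I)]^+$ as the one new input. That isomorphism actually holds for arbitrary modules, so the finite presentation of $R/I$ is not needed at that step. Your explicit cokernel argument for (5) and your filtration argument for (6) simply spell out what the paper obtains by citing \cite{BOPP22} and \cite{EJ00}.
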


%\begin{proof}
%It is clear from their definitions that $\mathsf{BA}_n$ is closed under direct summands and finite direct sums, and that $\mathsf{BF}_n$ is closed under coproducts, extensions and that $R \in \mathsf{BF}_n$. Moreover, note that $N \in \mathsf{BF}_n$ if, and only if, $N^+ \in \mathsf{BA}_n$, since there is a natural isomorphism $\Ext^1_R(R/I, N^+) \cong [\Tor^R_1(N, R/I)]^+$ by \cite[Thm. 3.2.1]{EJ00}. Thus, we get parts (1) and (2). Part (4), on the other hand, is immediate
%\end{proof}

It is clear that the containments $\mathsf{A}_n \subseteq \mathsf{BA}_n$ and $\mathsf{F}_n \subseteq \mathsf{BF}_n$ always hold. We are not aware if the converse containments $\mathsf{A}_n \supseteq \mathsf{BA}_n$ and $\mathsf{F}_n \supseteq \mathsf{BF}_n$ hold in general. We shall comment more on this in Proposition \ref{prop:An_vs_IAn} and Corollary \ref{cor:when_weak_is_strong}.

%%%%%%%%%%%%%%%%%%%%%%%%%%%%%%%%%%%%%
%%%%%%%%%%%%%%%%%%%%%%%%%%%%%%%%%%%%%
%%%%%%%%%%%%%%%%%%%%%%%%%%%%%%%%%%%%%
%%%%%%%%%%%%%%%%%%%%%%%%%%%%%%%%%%%%%

\section{Relation between $n$-absolutely pure, $n$-flat modules and $n$-coherent rings}\label{sec:relations}

Under the assumption that the ground ring $R$ is Lee $n$-coherent, we can obtain more properties for the classes $\mathsf{F}_n$ and $\mathsf{A}_n$. Moreover, some of these properties are actually equivalent to the $n$-coherency of $R$. We start with the characterization of Lee $n$-coherent rings stated below. It will be convenient to recall that an $R$-module $T$ is \emph{$n$-tilting} if the following assertions hold:
\begin{itemize}
\item ${\rm pd}(T) \leq n$.
\item $\Ext^i_R(T,T^{(I)}) = 0$, for every $i \geq 1$ and every index set $I$, where $T^{(I)}$ denotes the coproduct of copies of $T$ indexed by $I$. 
\item There exist $r \geq 0$ and an exact sequence $0 \to R \to T_0 \to T_1 \to \cdots \to T_r \to 0$ such that each $T_i$ is isomorphic to a direct summand of a coproduct of copies of $T$.
\end{itemize}
Classes of $R$-modules of the form 
\[
T^\perp := \{ N \in \Rmod \, \text{:} \, \Ext^i_R(T,N) = 0 \, \text{for every} \, i > 0 \}
\]
are called \emph{$n$-tilting classes} if $T$ is an $n$-tilting $R$-module. Dually, an $R^{\rm op}$-module $C$ is \emph{$n$-cotilting} if the following assertions hold:
\begin{itemize}
\item ${\rm id}(C) \leq n$.
\item $\Ext^i_R(C^{I},C) = 0$, for every $i \geq 1$ and every index set $I$, where $C^{I}$ denotes the product of copies of $C$ indexed by $I$. 
\item There exist $r \geq 0$, an injective cogenerator $Q$ of $\modR$, and an exact sequence $0 \to C_r \to \cdots \to C_1 \to C_0 \to Q \to 0$ such that each $C_i$ is isomorphic to a direct summand of a product of copies of $C$.
\end{itemize}
Classes of $R^{\rm op}$-modules of the form 
\[
{}^{\perp}C := \{ M \in \modR \, \text{:} \, \Ext^i_R(M,C) = 0 \, \text{for every} \, i > 0 \}
\]
are called \emph{$n$-cotilting classes} if $C$ is an $n$-cotilting $R^{\rm op}$-module.

\begin{theorem}\label{theo1}
The following assertions are equivalent for any ring $R$ and $0 < n < \infty$:
\begin{enumerate}[(a)]
\item $R$ is Lee $n$-coherent;

\item $(\mathsf{F}_n,\mathsf{A}_n)$ is a complete duality pair;

\item the natural morphism $\varinjlim_{\lambda \in \Lambda} \Ext^1_R(F,M_\lambda) \to \Ext^1_R(F,\varinjlim_{\lambda \in \Lambda} M_\lambda)$ is an isomorphism for any $F \in \mathsf{FP}^{\leq n}$ and any direct system $(M_\lambda)_{\lambda \in \Lambda}$ of $R$-modules;

\item $\mathsf{A}_n$ is closed under direct limits;

\item the natural morphism $\Tor_1^R(\prod_{\lambda \in \Lambda} N_\lambda, F) \to \prod_{\lambda \in \Lambda} \Tor_1^R(N_\lambda,F)$ is an isomorphism for any $F \in \mathsf{FP}^{\leq n}$ and any family $(N_\lambda)_{\lambda \in \Lambda}$ of $R^{\rm op}$-modules;

\item $\mathsf{F}_n$ is closed under direct products;

\item any direct product of copies of $R$ belongs to $\mathsf{F}_n$ as an $R^{\rm op}$-module;

\item $\mathsf{F}_n$ is preenveloping; 

\item $\mathsf{FP}^{\leq n} \subseteq \mathsf{FP}_2$;

\item $\mathsf{A}_n$ is coresolving;

\item $({}^{\perp_1}\mathsf{A}_n,\mathsf{A}_n)$ is a hereditary cotorsion pair;

\item $\mathsf{A}_n$ is an $n$-tilting class;

\item $\mathsf{F}_n$ is an $n$-cotilting class;

\item every $R$-module has an $n$-absolutely pure cover. \\
\end{enumerate}
\end{theorem}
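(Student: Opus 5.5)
The plan is to organize the fourteen conditions into a few clusters, each tied to (a) through the finiteness condition (i). We first note (a)$\Leftrightarrow$(i), which is exactly Remark~\ref{rmk:FP2}. Everything then rests on two standard facts for $F \in \mathsf{FP}_2$, i.e.\ $F$ with a resolution $P_2 \to P_1 \to P_0 \to F \to 0$ by finitely generated free modules: $\Ext^1_R(F,-)$ commutes with direct limits, and $\Tor_1^R(-,F)$ commutes with direct products. Both are obtained by computing with this truncated resolution and using that $\Hom_R(P_i,-)$ and $-\otimes_R P_i$ commute with the relevant (co)limits.

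\emph{Flat side: (i)$\Rightarrow$(e)$\Rightarrow$(f)$\Rightarrow$(g)$\Rightarrow$(i), then (f)$\Leftrightarrow$(h).} The implication (i)$\Rightarrow$(e) is the product fact above, and (e)$\Rightarrow$(f)$\Rightarrow$(g) are immediate. For (g)$\Rightarrow$(i), take $F \in \mathsf{FP}^{\leq n}$ and $0 \to K \to P \to F \to 0$ with $P$ finitely generated free and $K$ finitely generated. Since $\Tor_1^R(R^I,F) = 0$, the map $R^I \otimes_R K \to R^I \otimes_R P \cong P^I$ is injective. Hence the canonical map $R^I \otimes_R K \to K^I$ is injective for every index set $I$. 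It is surjective because $K$ is finitely generated, so it is bijective, and Lenzing's criterion gives that $K$ is finitely presented; thus $F \in \mathsf{FP}_2$. For (f)$\Leftrightarrow$(h), recall that $\mathsf{F}_n$ is closed under pure submodules by Proposition~\ref{prop:basic_properties}(3). For such classes, being preenveloping is equivalent to closure under products (Rada--Saor\'in).

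\emph{Absolutely pure side: (i)$\Rightarrow$(c)$\Rightarrow$(d)$\Rightarrow$(c), plus a return to (i).} The implication (i)$\Rightarrow$(c) is the direct limit fact above, and (c)$\Rightarrow$(d) is clear. For (d)$\Rightarrow$(c), embed each $M_\lambda$ functorially into an injective module $E_\lambda$, for instance $E_\lambda = \Hom_{\mathbb{Z}}(R, (M_\lambda^{+})^{+\!+})$-type canonical injectives, so that the $E_\lambda$ form a direct system with cokernels $C_\lambda$. Then $\Ext^1_R(F,M_\lambda) = {\rm Coker}(\Hom(F,E_\lambda) \to \Hom(F,C_\lambda))$. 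In the limit, $\varinjlim E_\lambda \in \mathsf{A}_n$ by (d), and $\Hom_R(F,-)$ commutes with direct limits because $F$ is finitely presented. Comparing the two cokernel sequences gives the isomorphism in (c). To return to the other cluster I would prove (c)$\Rightarrow$(e) by Pontryagin duality, using $\Tor_1^R(N,F)^+ \cong \Ext^1_R(F,N^+)$ for finitely presented $F$. Alternatively, (c) applied to the direct system of finite subsums of a free module, combined with the standard characterization of $\mathsf{FP}_2$ via commutation of $\Ext^1(F,-)$ with direct limits, gives (i) directly. I expect this second route is the cleaner one to write up.

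\emph{Cotorsion, tilting and covers: (i)$\Rightarrow$(j)$\Leftrightarrow$(k)$\Rightarrow$(l)$\Rightarrow$(d), then (l)$\Leftrightarrow$(m) and (n).} For (i)$\Rightarrow$(j), the syzygy $K$ of $F \in \mathsf{FP}^{\leq n}$ is finitely presented with ${\rm pd}(K) \leq n-1$, so $K \in \mathsf{FP}^{\leq n}$ and $\Ext^2_R(F,M) \cong \Ext^1_R(K,M) = 0$. Iterating gives the vanishing of all higher Ext, which is hereditarity. The equivalence (j)$\Leftrightarrow$(k) is the remark in the preliminaries. For (k)$\Rightarrow$(l), the cotorsion pair is hereditary, cogenerated by a set, has left class in $\mathsf{P}^{\leq n}$ (Proposition~\ref{prop:basic_properties}(6)), and has right class closed under coproducts (Proposition~\ref{prop:basic_properties}(5)); the G\"obel--Trlifaj characterization of tilting cotorsion pairs then applies. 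For (l)$\Rightarrow$(d), tilting classes are of finite type (Bazzoni--\v{S}\v{t}ov\'\i\v{c}ek), hence closed under direct limits. For (l)$\Leftrightarrow$(m), I would use the standard duality between a finite-type $n$-tilting class $\mathcal{S}^{\perp}$ and the $n$-cotilting class $\mathcal{S}^{\top}$, with $\mathcal{S} = \mathsf{FP}^{\leq n}$. In the other direction, cotilting classes are closed under products, which gives (f). Finally, (d)$\Rightarrow$(n) holds because $\mathsf{A}_n$ is closed under coproducts, direct limits and pure quotients, hence covering (Holm--J{\o}rgensen, or Enochs' theorem on precovering classes closed under direct limits).

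The step I expect to be the main obstacle is (n)$\Rightarrow$(d), or more generally any implication back to coherence from (n). My first attempt will use $\mathsf{A}_n$'s closure under coproducts and pure submodules: the existence of $\mathsf{A}_n$-precovers for $\varinjlim M_\lambda$ factors the canonical pure epimorphism $\bigoplus M_\lambda \to \varinjlim M_\lambda$. From this I would try to exhibit $\varinjlim M_\lambda$ as a direct summand of a pure quotient lying in $\mathsf{A}_n$, possibly invoking a known result that precovering classes closed under coproducts and pure submodules are closed under direct limits. A secondary delicate point is (d)$\Rightarrow$(c): the embedding into injectives must be genuinely functorial so that the cokernels form a direct system.
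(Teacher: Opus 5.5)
Your proposal leaves two of the fourteen conditions outside the cycle of implications.

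First, condition (b) is never treated. There is neither (a) $\Rightarrow$ (b) nor any implication out of (b). The isomorphism you quote, $\Tor_1^R(N,F)^+ \cong \Ext^1_R(F,N^+)$, holds for every $F$. It only gives the always-valid duality pair $(\mathsf{F}_n,\mathsf{A}_n)$ of Proposition~\ref{prop:basic_properties}(1). What (b) adds is the symmetric statement $M \in \mathsf{A}_n \Leftrightarrow M^+ \in \mathsf{F}_n$.
\begin{itemize}
\item For (a) $\Rightarrow$ (b) you need the other natural isomorphism $\Tor_1^R(M^+,F) \cong \Ext^1_R(F,M)^+$. This one requires $F \in \mathsf{FP}_2$, and that is where (i) enters.
\item For the way back, symmetry makes $\mathsf{A}_n$ closed under pure quotients. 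Together with closure under coproducts and the pure epimorphism $\bigoplus_\lambda M_\lambda \to \varinjlim M_\lambda$, this gives (d).
\end{itemize}
Both steps are short, but they must be included.

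Second, and more seriously, (n) is entered (from (d)) but never left. The tool you suggest for (n) $\Rightarrow$ (d) is false: a precovering class closed under coproducts and pure submodules need not be closed under direct limits. The projective abelian groups form a precovering class closed under coproducts and under all subgroups, yet $\mathbb{Q} = \varinjlim \mathbb{Z}$ is not projective. Your precover argument can at best show that $\varinjlim M_\lambda$ is a pure quotient of a module in $\mathsf{A}_n$; the precover is a pure epimorphism because maps from finitely presented modules factor through some $M_\lambda$. But closure of $\mathsf{A}_n$ under pure quotients is itself equivalent to (d), given coproduct closure, so the argument is circular. The minimality of covers has to be used. The paper does this by citing \cite[Thm. 3.4]{DaiDing18}: a covering class closed under pure submodules, extensions and direct products is closed under direct limits, and $\mathsf{A}_n$ has these closure properties over any ring.

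The remaining steps are sound, and in places they differ usefully from the paper.
\begin{itemize}
\item You go (d) $\Rightarrow$ (c) $\Rightarrow$ (i) instead of using Wisbauer's criterion on direct limits of FP-injectives.
\item Your (l) $\Rightarrow$ (d), via Bazzoni--\v{S}\v{t}ov\'{\i}\v{c}ek finite type, returns from the (j)/(k)/(l) cluster to coherence. The paper's written proof has no arrow out of that cluster. A cheaper route is $\Ext^1_R(K,M) \cong \Ext^1_R(R^m/K,\Omega^{-1}(M)) = 0$ for FP-injective $M$, as in Remark~\ref{rmk:theo2}.
\item One correction in (d) $\Rightarrow$ (c): the module $\Hom_{\mathbb{Z}}(R,(M_\lambda^+)^{++})$ you suggest is not injective in general, since character groups need not be divisible. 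A valid functorial embedding into an injective is $M \to (R^+)^{\Hom_R(M,R^+)}$, $m \mapsto (f(m))_f$.
\end{itemize}
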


\begin{proof} ~\
\begin{itemize}
\item (a) $\Rightarrow$ (b): By Proposition \ref{prop:basic_properties}, $(\mathsf{F}_n,\mathsf{A}_n)$ is a perfect duality pair. Also, $\mathsf{F}_n$ is closed under direct summands and finite coproducts. On the other hand, by Remark \ref{rmk:FP2}, we have that $\mathsf{FP}^{\leq n} \subseteq \mathsf{FP}_2$, and so by \cite[Lem. 1.2.11 (d)]{GT} there is a natural isomorphism $\Tor^R_1(M^+,F) \cong [\Ext^1_R(F,M)]^+$ for every $F \in \mathsf{FP}^{\leq n}$ and $M \in {\rm Mod}(R)$. Since $\mathbb{Q/Z}$ is an injective cogenerator in ${\rm Mod}(\mathbb{Z})$, it follows that $M \in \mathsf{A}_n$ if, and only if, $M^+ \in \mathsf{F}_n$.

\item (b) $\Rightarrow$ (a): Suppose we are given a finitely generated submodule $K$ of a finitely generated and free $R$-module $R^m$, such that $\pd(K) \leq n-1$. In order to see that $K$ is finitely presented, since it is already finitely generated, it suffices to show that $\Hom_R(K,\sim)$ preserves direct limits of FP-injective $R$-modules (see \cite[\S 25.4]{Wisbauer}). So suppose we are given a direct system $(M_\lambda)_{\lambda \in \Lambda}$ where each $M_\lambda$ is an FP-injective $R$-module. Consider the canonical pure epimorphism
\[
\bigoplus_{\lambda \in \Lambda} M_\lambda \to \varinjlim_{\lambda \in \Lambda} M_\lambda \to 0.
\]
Since FP-injective $R$-modules are $n$-absolutely pure, and $\mathsf{A}_n$ is closed under coproducts by Proposition \ref{prop:basic_properties} (5), we have that $\oplus_{\lambda \in \Lambda} M_\lambda \in \mathsf{A}_n$. On the other hand, by (b) we have that $(\mathsf{A}_n,\mathsf{F}_n)$ is a duality pair, and so $\mathsf{A}_n$ is closed under pure quotients. Thus, $\varinjlim_{\lambda \in \Lambda} M_\lambda \in \mathsf{A}_n$, and hence 
\[
\Ext^1_R(R^m / K, \varinjlim_{\lambda \in \Lambda} M_\lambda) = 0 \ \ \text{and} \ \ \Ext^1_R(R^m / K, M_\lambda) = 0
\] 
for every $\lambda \in \Lambda$. It follows that there exists the following commutative diagram with exact rows, where the vertical arrows are the canonical morphisms induced by the universal property of direct limits:
\[
\begin{tikzpicture}[description/.style={fill=white,inner sep=2pt}] 
\matrix (m) [matrix of math nodes, row sep=2em, column sep=1em, text height=1.25ex, text depth=0.25ex] 
{ 
\varinjlim_{\lambda \in \Lambda} \Hom_R(R^m/K, M_\lambda) & \varinjlim_{\lambda \in \Lambda} \Hom_R(R^m, M_\lambda) & \varinjlim_{\lambda \in \Lambda} \Hom_R(K, M_\lambda) & 0 \\
\Hom_R(R^m/K, \varinjlim_{\lambda \in \Lambda} M_\lambda) & \Hom_R(R^m, \varinjlim_{\lambda \in \Lambda} M_\lambda) & \Hom_R(K, \varinjlim_{\lambda \in \Lambda} M_\lambda) & 0 \\
}; 
\path[->] 
(m-1-1) edge (m-1-2) edge node[left] {\footnotesize$\alpha$} (m-2-1)
(m-1-2) edge (m-1-3) edge node[left] {\footnotesize$\beta$} (m-2-2)
(m-1-3) edge (m-1-4) edge node[left] {\footnotesize$\gamma$} (m-2-3)
(m-2-1) edge (m-2-2) (m-2-2) edge (m-2-3) (m-2-3) edge (m-2-4)
;
\end{tikzpicture} 
\]
Since $R^m$ and $R^m / K$ are finitely presented, we have that $\alpha$ and $\beta$ are isomorphisms. Then, the 5 Lemma implies that $\gamma$ is also an isomorphism. 

\item (a) $\Rightarrow$ (c) and (a) $\Rightarrow$ (e): Follows by Remark \ref{rmk:FP2} and \cite[Thm. 2]{Brown}.

\item (c) $\Rightarrow$ (d) and (e) $\Rightarrow$ (f) $\Rightarrow$ (g): Immediate. 

\item (d) $\Rightarrow$ (a): Similar to (b) $\Rightarrow$ (a).

\item (f) $\Leftrightarrow$ (h): Since $\mathsf{F}_n$ is closed under direct summands and pure submodules, we obtain from \cite[Coroll. 3.5 (c)]{RadaSaorin98} that $\mathsf{F}_n$ is preenveloping if, and only if, $\mathsf{F}_n$ is closed under direct products.  

\item (g) $\Rightarrow$ (a): Let $K \subseteq R^m$ be a finitely generated submodule of a finitely generated and free $R$-module $R^m$ with ${\rm pd}(K) \leq n-1$. By \cite[\S 25.4]{Wisbauer}, $K$ is finitely presented if, and only if, the canonical morphism $\varphi \colon R^{\Lambda} \otimes_R K \to K^\Lambda$ is an isomorphism for every set $\Lambda$. By (g), we have that $R^\Lambda \in \mathsf{F}_n$, and so $\Tor^R_1(R^\Lambda,R^m / K) = 0$. Thus, we can obtain the following commutative diagram with exact rows:
\[
\begin{tikzpicture}[description/.style={fill=white,inner sep=2pt}] 
\matrix (m) [matrix of math nodes, row sep=2em, column sep=2em, text height=1.25ex, text depth=0.25ex] 
{ 
0 & R^\Lambda \otimes_R K & R^\Lambda \otimes_R R^m & R^\Lambda \otimes_R R^m/K & 0 \\ 0 & K^\Lambda & (R^m)^\Lambda & (R^m/K)^\Lambda & 0 \\
}; 
\path[->] 
(m-1-1) edge (m-1-2)
(m-1-2) edge (m-1-3) edge node[left] {\footnotesize$\varphi$} (m-2-2)
(m-1-3) edge (m-1-4) edge node[left] {\footnotesize$\cong$} (m-2-3)
(m-2-1) edge (m-2-2) (m-2-2) edge (m-2-3) (m-2-3) edge (m-2-4) (m-2-4) edge (m-2-5)
(m-1-4) edge node[left] {\footnotesize$\cong$} (m-2-4) edge (m-1-5)
;
\end{tikzpicture} 
\]
where the middle and right vertical arrows are isomorphisms by \cite[\S 25.4]{Wisbauer}, since $R^m$ and $R^m / K$ are finitely presented. Thus, Five Lemma implies that $\varphi$ is an isomorphism. 

\item (a) $\Leftrightarrow$ (i): See Remark \ref{rmk:FP2}.

\item (i) $\Rightarrow$ (j): By Proposition \ref{prop:basic_properties} (4), it is only left to show that $\mathsf{A}_n$ is closed under monocokernels. Thus, consider a short exact sequence of the form $0 \to M_1 \to M_2 \to M_3 \to 0$ where $M_1, M_2 \in \mathsf{A}_n$, and let $F \in \mathsf{FP}^{\leq n}$. We have the induced exact sequence of abelian groups $\Ext^1_R(F,M_2) \to \Ext^1_R(F,M_3) \to \Ext^2_R(F,M_1)$ where $\Ext^1_R(F,M_2) = 0$. On the other hand, $\Ext^2_R(F,M_1) \cong \Ext^1_R(K,M_1)$, where $K \in \mathsf{FP}^{\leq n}$ since $F \in \mathsf{FP}_2$, and so $\Ext^1_R(K,M_1) = 0$. Hence, $\Ext^1_R(F,M_3) = 0$ for every $F \in \mathsf{FP}^{\leq n}$, that is, $M_3 \in \mathsf{A}_n$. 

\item (j) $\Leftrightarrow$ (k): Immediate by Proposition \ref{prop:basic_properties} (4).

\item (j) $\Leftrightarrow$ (l): We already know from Proposition \ref{prop:basic_properties} that $\mathsf{A}_n$ is preenveloping, closed under coproducts and direct summands, and that ${}^{\perp_1}[\mathsf{A}_n] \subseteq \mathsf{P}^{\leq n}$. The equivalence between (j) and (l) then follows by \cite[Thm. 5.1.14]{GT}. 

\item (m) $\Rightarrow$ (f): It is an immediate consequence of \cite[Thm. 8.1.9]{GT}.

\item (f) $\Rightarrow$ (m): We already know from Proposition \ref{prop:basic_properties} that $\mathsf{F}_n$ is covering, closed under direct summands, and $[\mathsf{F}_n]^{\perp_1} \subseteq \mathsf{I}^{\leq n}$. By (f), $\mathsf{F}_n$ is also closed under direct products. Furthermore, since (f) and (a) are equivalent, we have that $R$ is Lee $n$-coherent, which in turn implies that $\mathsf{F}_n$ is resolving (following an argument similar to (i) $\Rightarrow$ (j), using the long exact sequence of Tor). Thus, again by \cite[Thm. 8.1.9]{GT}, we have that $\mathsf{F}_n$ is $n$-cotilting. 

\item (b) $\Rightarrow$ (n): If we assume (b), then $(\mathsf{A}_n,\mathsf{F}_n)$ is a coproduct closed duality pair by Proposition \ref{prop:basic_properties} (5). Hence, $\mathsf{A}_n$ is covering.

\item (n) $\Rightarrow$ (d): We know that the class $\mathsf{A}_n$ is closed under pure submodules, extensions and direct products. If we assume (n), we have that $\mathsf{A}_n$ is also covering, and so it is closed under direct limits by \cite[Thm. 3.4]{DaiDing18}.
\end{itemize}
\end{proof}

\begin{remark}\label{rmk:Thm31YangLiu11} ~\
\begin{enumerate}[(1)]
\item The equivalence between (a) and (b) in the previous proposition can be immediately deduced from \cite[Thm. 3]{Lee}, and also the equivalence between (a), (f) and (g) is stated and proved in \cite[Thm. 5]{Lee}. However, these results depend  on \cite[Thm. 2]{Lee}, which has a gap in the proof of (ii') $\Rightarrow$ (i). Namely, the short exact sequence $0 \to A \to E \to E/A \to 0$ considered by the author, with $E$ injective and $A$ $n$-absolutely pure, need not be pure in general. For this reason, we have offered a different argument for the equivalence between (a), (b), (f) and (g).  

\item The equivalence between (a), (c)\footnote{We have corrected the direction of the natural morphism $\varinjlim_{\lambda \in \Lambda} \Ext^1_R(F,M_\lambda) \to \Ext^1_R(F,\varinjlim_{\lambda \in \Lambda} M_\lambda)$ induced by the universal property of colimits.}, (d), (h), (j) and (k) is stated and proved in \cite[Thm. 3.1]{YangLiu11}. Parts of our proof for Theorem \ref{theo1} simplifies some arguments in \cite[Thm. 3.1]{YangLiu11}. On the other hand, we fill a gap in the proof of the implication (3) $\Rightarrow$ (1) provided in \cite[Thm. 3.1]{YangLiu11}. In this reference, the authors consider a finitely generated submodule $K$ of a free $R$-module $F$ whose projective dimension is $n-1$, and claim that there is an isomorphism $\gamma \colon \Hom_R(K, \varinjlim M_i) \to \varinjlim \Hom_R(K,M_i)$, where $(M_i)_{i \in I}$ is a direct system in $\Rmod$. Certainly, this implies that $K$ is finitely presented by \cite[Prop. 2.2]{Stenstrom70}, and so $R$ is Lee $n$-coherent. However, it is not clear from the assertion (3) and the arguments given in \cite[Thm. 3.1]{YangLiu11} why $\gamma$ is an isomorphism. This partially has to do with the fact that it is not specified in which class of $\Rmod$ the modules $M_i$ are taken. 

\item The assertions in the previous theorem are not necessarily equivalent to the following: 
\begin{enumerate}[(o)]
\item[(o)] $\mathsf{F}_n$ is a resolving class of ${\rm Mod}(R^{\rm op})$.

\item[(p)] $(\mathsf{F}_n,[\mathsf{F}_n]^{\perp_1})$ is a hereditary cotorsion pair. 
\end{enumerate}
These two assertions are clearly equivalent, and assertion (a) certainly implies (o). However, (o) does not imply (a) in general. Take for instance $n = \infty$. Then, $\mathsf{F}_n$ is the class of flat $R^{\rm op}$-modules, which is resolving for arbitrary (and so not necessarily coherent = $\infty$-coherent) rings $R$. 

\item Pinzon proves in \cite[Coroll. 2.7]{Pinzon08} that if R is coherent (i.e., $\infty$-coherent) then every $R$-module has an absolutely pure cover (i.e., $\infty$-absolutely pure). The converse of this result is proved in \cite[Coroll. 3.5]{DaiDing18} by Dai and Ding, solving in the positive an open question posed by Pinzon herself in \cite[Rmk. 2.8]{Pinzon08}. The equivalence between (a) and (n) in the previous theorem represents a generalization of this characterization of coherent rings. 
\end{enumerate}
\end{remark}

%\begin{remark}
%Moreover, $(\mathsf{F}_n,\mathsf{A}_n)$ and $(\mathsf{BF}_n,\mathsf{BA}_n)$ are bicomplete duality pairs if, and only if, $R$ is Lee $n$-coherent. {\color{red}{Can be apply the theory of relative Gorenstein flats and semi-dualizing modules (last two papers with Vítor)?}}
%\end{remark}

More properties of $\mathsf{F}_n$ and $\mathsf{A}_n$ over Lee $n$-coherent rings are given in \cite[Props. 3.4 \& 3.7]{YangLiu11}, concerning the relation between the classes $\mathsf{F}_n$ and $\mathsf{A}_n$, their orthogonal complements, and the classes of projective, flat, cotorsion and FP-injective modules. First, the authors show in \cite[Prop. 3.4]{YangLiu11} that, if $R$ is a Lee $n$-coherent ring, then the following conditions are equivalent:
\begin{enumerate}[(a)]
\item Every $n$-flat $R^{\rm op}$-module is flat.

\item Every cotorsion $R^{\rm op}$-module belongs to $[\mathsf{F}_n]^{\perp_1}$.

\item Every $n$-absolutely pure $R$-module is FP-injective.

\item Every finitely presented $R$-module belongs to ${}^{\perp_1}[\mathsf{A}_n]$\footnote{We just point out a typo in assertion (4) of \cite[Prop. 3.4]{YangLiu11}, it should say that every finitely presented $R$-module belongs to ${}^{\perp_1}[\mathsf{A}_n]$ (instead of $[\mathsf{A}_n]^{\perp_1}$, as originally stated).}.
\end{enumerate}

Over an arbitrary ring $R$, it is shown in \cite[Prop. 3.7]{YangLiu11} that the following conditions are equivalent:
\begin{enumerate}[(a)]
\item Every $R$-module is $n$-absolutely pure.

\item Every cotorsion $R$-module is $n$-absolutely pure.

\item Every $R^{\rm op}$-module is $n$-flat.

\item Every cotorsion $R^{\rm op}$-module is $n$-flat.

\item Every $R^{\rm op}$-module in $[\mathsf{F}_n]^{\perp_1}$ is injective.

\item Every $R$-module in ${}^{\perp_1}[\mathsf{A}_n]$ is projective.

\item Every nonzero $R^{\rm op}$-module contains a nonzero $n$-flat submodule.

\item The complete cotorsion pair $({}^{\perp_1}[\mathsf{A}_n],\mathsf{A}_n)$ is hereditary and every $R$-module in ${}^{\perp_1}[\mathsf{A}_n]$ is $n$-absolutely pure. 
\end{enumerate}

%%%%%%%%%%%%%%%%%%%%%%%%%%%%%%%%%%%%%
%%%%%%%%%%%%%%%%%%%%%%%%%%%%%%%%%%%%%
%%%%%%%%%%%%%%%%%%%%%%%%%%%%%%%%%%%%%
%%%%%%%%%%%%%%%%%%%%%%%%%%%%%%%%%%%%%

\section{$n$-Coherency at the level of ideals}\label{sec:ideals}

It is well known that coherent rings can be characterized by means of finiteness conditions on their ideals. Recall that a ring $R$ is left coherent if every finitely generated left ideal of $R$ is finitely presented. Then, the following question is natural:

\begin{question}\label{q:open1}
Given a ring $R$, is $R$ Lee $n$-coherent provided that every finitely generated left ideal with ${\rm pd}(I) \leq n-1$ is finitely presented?
\end{question}

In other words, are Lee $n$-coherent rings and weakly Lee $n$-coherent rings the same class of rings? We are not aware of an answer to this question. However, weakly Lee $n$-coherent rings will be of interest by themselves, and will be related to the classes $\mathsf{BA}_n$ and $\mathsf{BF}_n$ defined at the end of Section \ref{sec:general_properties}, as stated in the following result (whose proof is similar to that of Theorem \ref{theo1}).

\begin{theorem}\label{theo2}
The following assertions are equivalent:
\begin{enumerate}[(a)]
\item $R$ is weakly Lee $n$-coherent;

\item $(\mathsf{BF}_n,\mathsf{BA}_n)$ is a complete duality pair;

\item $\mathsf{BA}_n$ is closed under direct limits;

\item the natural morphism $\varinjlim_{\lambda \in \Lambda} \Ext^1_R(R/I,M_\lambda) \to \Ext^1_R(R/I,\varinjlim_{\lambda \in \Lambda} M_\lambda)$ is an isomorphism for any finitely generated left ideal $I \lhd R$ with ${\rm pd}(I) \leq n-1$, and any direct system $(M_\lambda)_{\lambda \in \Lambda}$ of $R$-modules;

\item $\mathsf{BF}_n$ is preenveloping; 

\item $\mathsf{BF}_n$ is closed under direct products;

\item any direct product of copies of $R$ belongs to $\mathsf{BF}_n$ as an $R^{\rm op}$-module;

\item the natural morphism $\Tor_1^R(\prod_{\lambda \in \Lambda} N_\lambda, R/I) \to \prod_{\lambda \in \Lambda} \Tor_1^R(N_\lambda,R/I)$ is an isomorphism for any finitely generated left ideal $I \lhd R$ with ${\rm pd}(I) \leq n-1$, and any family $(N_\lambda)_{\lambda \in \Lambda}$ of $R^{\rm op}$-modules;

\item every $R$-module has a Baer $n$-absolutely pure cover.
\end{enumerate}
\end{theorem}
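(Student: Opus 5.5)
We follow the blueprint of Theorem \ref{theo1}, with $\mathsf{FP}^{\leq n}$ replaced by the set $\mathsf{CFP}^{\leq n}$ and the submodule $K \subseteq R^m$ replaced by a finitely generated left ideal $I$ with ${\rm pd}(I) \leq n-1$. The starting point is the cyclic analogue of Remark \ref{rmk:FP2}: $R$ is weakly Lee $n$-coherent if, and only if, every $R/I \in \mathsf{CFP}^{\leq n}$ belongs to $\mathsf{FP}_2$. Indeed, $R/I \in \mathsf{FP}_2$ exactly when $I$ is finitely presented, by \cite[Thm. 2.1.2]{Glaz89}. All implications out of (a) rest on this observation.

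\textbf{Implications out of (a).}
\begin{itemize}
\item (a) $\Rightarrow$ (b): Since $R/I \in \mathsf{FP}_2$, \cite[Lem. 1.2.11 (d)]{GT} gives $\Tor^R_1(M^+,R/I) \cong [\Ext^1_R(R/I,M)]^+$. Hence $M \in \mathsf{BA}_n$ if, and only if, $M^+ \in \mathsf{BF}_n$. Together with Proposition \ref{prop:basic_properties_cyclic} (1), this makes $(\mathsf{BF}_n,\mathsf{BA}_n)$ symmetric and perfect, i.e. complete.
\item (a) $\Rightarrow$ (d) and (a) $\Rightarrow$ (h): These follow from \cite[Thm. 2]{Brown} applied to $R/I \in \mathsf{FP}_2$.
\item The chains (d) $\Rightarrow$ (c) and (h) $\Rightarrow$ (f) $\Rightarrow$ (g) are immediate.
\end{itemize}

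\textbf{Returning to (a).}
\begin{itemize}
\item (b) $\Rightarrow$ (a) and (c) $\Rightarrow$ (a): Copy the argument of Theorem \ref{theo1} with $R^m$ replaced by $R$ and $K$ by $I$. Take a direct system of FP-injective modules. FP-injective modules lie in $\mathsf{A}_n \subseteq \mathsf{BA}_n$, and $\mathsf{BA}_n$ is closed under coproducts by Proposition \ref{prop:basic_properties_cyclic} (5). Under (b), $\mathsf{BA}_n$ is closed under pure quotients, so the direct limit lies in $\mathsf{BA}_n$; under (c) this holds directly. Then $\Ext^1_R(R/I,-)$ vanishes on the whole system and on its limit. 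The five lemma shows that $\Hom_R(I,-)$ commutes with these direct limits, so $I$ is finitely presented by \cite[\S 25.4]{Wisbauer}.
\item (g) $\Rightarrow$ (a): Use the same diagram as in Theorem \ref{theo1}, built from $0 \to I \to R \to R/I \to 0$ tensored with $R^\Lambda$. The hypothesis $\Tor^R_1(R^\Lambda,R/I)=0$ makes the top row exact on the left. The five lemma then shows $R^\Lambda \otimes_R I \to I^\Lambda$ is an isomorphism, so $I$ is finitely presented.
\item (e) $\Leftrightarrow$ (f): This is \cite[Coroll. 3.5 (c)]{RadaSaorin98}, since $\mathsf{BF}_n$ is closed under direct summands and pure submodules by Proposition \ref{prop:basic_properties_cyclic} (3).
\end{itemize}

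\textbf{Covers.}
\begin{itemize}
\item (b) $\Rightarrow$ (i): Under (b), $(\mathsf{BA}_n,\mathsf{BF}_n)$ is a duality pair, and it is coproduct closed by Proposition \ref{prop:basic_properties_cyclic} (5). Hence $\mathsf{BA}_n$ is covering by \cite[Thm. 3.1]{HJ09}.
\item (i) $\Rightarrow$ (c): $\mathsf{BA}_n$ is closed under pure submodules, extensions and direct products, because it is a right Ext-orthogonal class. By \cite[Thm. 3.4]{DaiDing18}, a covering class with these closure properties is closed under direct limits.
\end{itemize}

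The main obstacle I expect is (c) $\Rightarrow$ (a) and (b) $\Rightarrow$ (a). There we must check that the test modules really lie in $\mathsf{BA}_n$ even though $\mathsf{A}_n \supseteq \mathsf{BA}_n$ is unknown. This is harmless because only the containment $\mathsf{A}_n \subseteq \mathsf{BA}_n$ is used, and FP-injective modules lie in $\mathsf{A}_n$. The other point to check is the application of \cite[Thm. 3.4]{DaiDing18}. Its hypotheses concern only closure properties, which $\mathsf{BA}_n$ has for the same reasons as $\mathsf{A}_n$.
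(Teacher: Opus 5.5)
Your proposal is correct and is essentially the paper's proof. The paper only says that Theorem \ref{theo2} is proved like Theorem \ref{theo1}, and you carry out exactly that adaptation: you replace $\mathsf{FP}^{\leq n}$ by $\mathsf{CFP}^{\leq n}$, use that $R/I \in \mathsf{FP}_2$ if and only if $I$ is finitely presented, and run the same cycle of implications with the same references (Brown, G\"obel--Trlifaj, Wisbauer, Rada--Saor\'in, Holm--J{\o}rgensen, Dai--Ding). The one detail you leave implicit is in (a) $\Rightarrow$ (b): symmetry of the pair also needs $\mathsf{BF}_n$ to be closed under direct summands and finite coproducts, which is immediate because it is a Tor-orthogonal class.
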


\begin{remark}\label{rmk:theo2}
If we compare the statements of Theorems \ref{theo1} and \ref{theo2}, we can note in the second one the absence of the following five assertions: 
\begin{enumerate}[(i)]
\item $\mathsf{CFP}^{\leq n} \subseteq \mathsf{FP}_\infty$.

\item $\mathsf{BA}_n$ is coresolving;

\item the complete cotorsion pair $({}^{\perp_1}[\mathsf{BA}_n],\mathsf{BA}_n)$ is hereditary;

\item $\mathsf{BA}_n$ is an $n$-tilting class (with $n < \infty$);

\item $\mathsf{BF}_n$ is an $n$-cotilting class (with $n < \infty$). 
\end{enumerate}
Assertions, (ii) and (iii) are clearly equivalent, while (ii) and (iv) are equivalent by \cite[Thm. 5.1.14]{GT} and Proposition \ref{prop:basic_properties_cyclic}. The equivalence between (ii) and (v) can be reduced to showing that $\mathsf{BA}_n$ is closed under monocokernels if, and only if, $\mathsf{BF}_n$ is closed under taking epikernels and direct products (see Proposition \ref{prop:basic_properties_cyclic}). If we assume the former, then $\mathsf{BF}_n$ is clearly closed under epikernels, since $(\mathsf{BF}_n,\mathsf{BA}_n)$ is a duality pair. We can also show that $R$ is weakly Lee $n$-coherent (which in turn will imply that $\mathsf{BF}_n$ is closed under direct products by Theorem \ref{theo2}). Indeed, let $I \lhd R$ be a finitely generated left ideal with ${\rm pd}(I) \leq n-1$, and consider an FP-injective $R$-module $M$ (and so $M \in \mathsf{BA}_n$), then 
\[
\Ext^1_R(I,M) \cong \Ext^2_R(R/I, M) \cong \Ext^1_R(R/I,\Omega^{-1}(M)) = 0,
\]
where $\Omega^{-1}(M)$ denotes any injective cosyzygy of $M$, and so belongs to $\mathsf{BA}_n$ by (ii). Then, by \cite[Thm. 2.1.10]{Glaz89} we obtain that $I$ is finitely presented. Now if we assume that $\mathsf{BF}_n$ is closed under epikernels and direct products, we have in particular that $R$ is weakly Lee $n$-coherent, and so $(\mathsf{BA}_n, \mathsf{BF}_n)$ is a duality pair by Theorem \ref{theo2}. The latter implies that $\mathsf{BA}_n$ is closed under monocokernels if $\mathsf{BF}_n$ is closed under epikernels. 

From the previous, any of the last four conditions implies in particular that $R$ is weakly Lee $n$-coherent. The latter can also be deduced from (i). For, if $I \lhd R$ is a finitely generated left ideal with ${\rm pd}(I) \leq n-1$, then $R/I \in \mathsf{CFP}^{\leq n} \subseteq \mathsf{FP}_\infty$, and so $I \in \mathsf{FP}_\infty$ by \cite[Prop. 2.3]{BGH14}. In particular, $I$ is finitely presented. \\
\end{remark}

\begin{question}\label{q:weaklyn-coh} ~\
\begin{enumerate}[(1)]
\item Does any of the five assertions from the previous remark holds if $R$ is weakly Lee $n$-coherent? 

\item Are these assertions equivalent under the latter assumption? 
\end{enumerate}
\end{question}

We now study the relation between $\mathsf{A}_n$ and $\mathsf{BA}_n$, and between $\mathsf{F}_n$ and $\mathsf{BF}_n$. \\

\begin{proposition}\label{prop:An_vs_IAn}
Consider the following assertions:
\begin{enumerate}[(a)] 
\item $\mathsf{A}_n = \mathsf{BA}_n$. 

\item $\mathsf{F}_n = \mathsf{BF}_n$. 
\end{enumerate}
The implication (a) $\Rightarrow$ (b) always holds, and both assertions are equivalent if $R$ is weakly Lee $n$-coherent.
\end{proposition}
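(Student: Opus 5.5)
The plan is to pass between $\mathsf{A}_n$ and $\mathsf{F}_n$ (resp. $\mathsf{BA}_n$ and $\mathsf{BF}_n$) via Pontryagin duals. Two duality statements will do all the work. By \cite[Lem. 5]{Lee}, $N \in \mathsf{F}_n$ if and only if $N^+ \in \mathsf{A}_n$. By Proposition \ref{prop:basic_properties_cyclic} (1), $N \in \mathsf{BF}_n$ if and only if $N^+ \in \mathsf{BA}_n$; this follows from the natural isomorphism $\Ext^1_R(R/I,N^+) \cong [\Tor^R_1(N,R/I)]^+$ together with the fact that $\mathbb{Q/Z}$ is an injective cogenerator.

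For (a) $\Rightarrow$ (b), the containment $\mathsf{F}_n \subseteq \mathsf{BF}_n$ always holds, so only the reverse inclusion needs proof. Let $N \in \mathsf{BF}_n$. Then $N^+ \in \mathsf{BA}_n$, and by (a) this gives $N^+ \in \mathsf{A}_n$. Lee's lemma then yields $N \in \mathsf{F}_n$. This direction uses no hypothesis on $R$.

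For (b) $\Rightarrow$ (a), assume $R$ is weakly Lee $n$-coherent. The key point is that the two duality pairs can then be reversed:
\begin{itemize}
\item By Theorem \ref{theo2} ((a) $\Rightarrow$ (b)), $(\mathsf{BF}_n,\mathsf{BA}_n)$ is a complete duality pair. In particular $(\mathsf{BA}_n,\mathsf{BF}_n)$ is a duality pair, so $M \in \mathsf{BA}_n$ if and only if $M^+ \in \mathsf{BF}_n$.
\item We also need the analogous statement for $\mathsf{A}_n$, namely $M \in \mathsf{A}_n$ if and only if $M^+ \in \mathsf{F}_n$. Theorem \ref{theo1} does not give this directly, since we only know weak coherence and not Lee $n$-coherence.
\end{itemize}
The way around this is to prove only the inclusion actually needed. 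Let $M \in \mathsf{BA}_n$. Then $M^+ \in \mathsf{BF}_n = \mathsf{F}_n$ by (b), and consequently $M^{++} \in \mathsf{A}_n$ by Lee's lemma. Now $M$ is a pure submodule of $M^{++}$ via the canonical evaluation map, and $\mathsf{A}_n$ is closed under pure submodules, as noted before Proposition \ref{prop:basic_properties}. Hence $M \in \mathsf{A}_n$, which gives $\mathsf{BA}_n \subseteq \mathsf{A}_n$. The other inclusion $\mathsf{A}_n \subseteq \mathsf{BA}_n$ is automatic, so (a) follows.

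The only delicate step is the first use of the weak coherence hypothesis, namely that $M \in \mathsf{BA}_n$ implies $M^+ \in \mathsf{BF}_n$; this is exactly where Theorem \ref{theo2} is needed. To justify it directly without citing the theorem, I would use that $R/I \in \mathsf{FP}_2$ when $I$ is finitely presented. Then \cite[Lem. 1.2.11 (d)]{GT} gives $\Tor^R_1(M^+,R/I) \cong [\Ext^1_R(R/I,M)]^+$, mirroring the argument for (a) $\Rightarrow$ (b) in Theorem \ref{theo1}.
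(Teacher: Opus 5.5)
Your proof is correct and follows the paper's argument essentially step for step. For (a) $\Rightarrow$ (b) you pass through the duality pairs $(\mathsf{BF}_n,\mathsf{BA}_n)$ and $(\mathsf{F}_n,\mathsf{A}_n)$; for (b) $\Rightarrow$ (a) you use Theorem \ref{theo2}, then $M^{++} \in \mathsf{A}_n$, the pure embedding $M \to M^{++}$, and closure of $\mathsf{A}_n$ under pure submodules. Your direct justification via $R/I \in \mathsf{FP}_2$ and \cite[Lem. 1.2.11 (d)]{GT} is also valid, and mirrors the (a) $\Rightarrow$ (b) argument of Theorem \ref{theo1}, whose analogue is what the paper says underlies Theorem \ref{theo2}.
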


\begin{proof}
Suppose that $\mathsf{A}_n = \mathsf{BA}_n$, and let $N \in \mathsf{BF}_n$. Since $(\mathsf{BF}_n,\mathsf{BA}_n)$ is a duality pair, we have that $N^+ \in \mathsf{BA}_n = \mathsf{A}_n$, and so $N \in \mathsf{F}_n$ since $(\mathsf{F}_n,\mathsf{A}_n)$ is also a duality pair. 

Now assume that $\mathsf{F}_n = \mathsf{BF}_n$ holds and that $R$ is weakly Lee $n$-coherent. Let $M \in \mathsf{BA}_n$. By Theorem \ref{theo2}, $(\mathsf{BA}_n,\mathsf{BF}_n)$ is a duality pair, and so $M^+ \in \mathsf{BF}_n = \mathsf{F}_n$. On the other hand, since $(\mathsf{F}_n,\mathsf{A}_n)$ is a duality pair, we have that $M^{++} \in \mathsf{A}_n$. Thus, we have a pure embedding $0 \to M \to M^{++}$ where $M^{++} \in \mathsf{A}_n$, and since $\mathsf{A}_n$ is closed under pure submodules, we obtain $M \in \mathsf{A}_n$. Hence, $\mathsf{BA}_n = \mathsf{A}_n$. \\
\end{proof}

The previous result, along with Theorems \ref{theo1} and \ref{theo2}, gives a sufficient condition under which every weakly Lee $n$-coherent ring is Lee $n$-coherent. \\

\begin{corollary}\label{cor:when_weak_is_strong}
Let $R$ be a weakly Lee $n$-coherent ring. If either $\mathsf{A}_n = \mathsf{BA}_n$ or $\mathsf{F}_n = \mathsf{BF}_n$, then $R$ is Lee $n$-coherent. \\
\end{corollary}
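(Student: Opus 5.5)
The plan is to reduce everything to Theorem \ref{theo1} and Theorem \ref{theo2}, using Proposition \ref{prop:An_vs_IAn} to unify the two hypotheses. First, since $R$ is weakly Lee $n$-coherent, Proposition \ref{prop:An_vs_IAn} tells us that the assertions $\mathsf{A}_n = \mathsf{BA}_n$ and $\mathsf{F}_n = \mathsf{BF}_n$ are equivalent. So in either case of the hypothesis we may assume that both equalities hold simultaneously.

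Next, apply Theorem \ref{theo2} to the weakly Lee $n$-coherent ring $R$. It gives several closure properties of the Baer classes, and any one of them can be transported to the $n$-classes through the equalities. I would use the cleanest one: by Theorem \ref{theo2} (c), $\mathsf{BA}_n$ is closed under direct limits. Hence $\mathsf{A}_n = \mathsf{BA}_n$ is closed under direct limits, and Theorem \ref{theo1} (d) $\Rightarrow$ (a) yields that $R$ is Lee $n$-coherent.

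As an alternative route with the same shape, Theorem \ref{theo2} (f) gives that $\mathsf{BF}_n = \mathsf{F}_n$ is closed under direct products. Then Theorem \ref{theo1} (f) $\Rightarrow$ (a) applies. This second route even avoids needing the harder direction of Proposition \ref{prop:An_vs_IAn} when the hypothesis is $\mathsf{F}_n = \mathsf{BF}_n$.

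The only point requiring care is that Theorem \ref{theo1} is stated for $0<n<\infty$. The implications (d) $\Rightarrow$ (a) and (f) $\Rightarrow$ (a) do not use finiteness of $n$ (they go through the arguments of (b) $\Rightarrow$ (a) and (g) $\Rightarrow$ (a)), so the case $n=\infty$ follows by the same arguments. In any case, for $n=\infty$ the conclusion is just the classical fact that coherence can be tested on ideals. I expect no genuine obstacle here; the corollary is a direct combination of the earlier results.
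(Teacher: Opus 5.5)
Your proposal is correct and matches the argument the paper intends: pass between the two equalities via Proposition \ref{prop:An_vs_IAn}, transport a closure property of the Baer classes from Theorem \ref{theo2} (direct limits for $\mathsf{BA}_n$, or direct products for $\mathsf{BF}_n$), and conclude with Theorem \ref{theo1}. As you partly observe, Proposition \ref{prop:An_vs_IAn} is not actually needed for either hypothesis, since each equality pairs directly with an item of Theorem \ref{theo2}: item (c) handles $\mathsf{A}_n = \mathsf{BA}_n$, and item (f) handles $\mathsf{F}_n = \mathsf{BF}_n$.
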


\begin{remark}\label{rmk0} ~\
\begin{enumerate}[(1)]
\item In \cite[Lem. 2.1]{YangLiu11}, the authors claim that assertions (a) and (b) in Proposition \ref{prop:An_vs_IAn} always hold (so in particular, they would be independent). However, there is a gap in their proof for assertion (a). Concretely, the morphisms $R \to L$ and $A \to R^{n-1}$ in the pullback construction appearing in their proof need not be epic in general.

\item Let us consider the case $n = \infty$. Then $\mathsf{A}_\infty$ is the class of FP-injective $R$-modules, and assertion (1) becomes: $M \in R\textrm{-}\mathrm{Mod}$ is FP-injective if, and only if, $\Ext^1_R(R/I,M) = 0$ for any finitely generated ideal $I$. This equivalence was proved to be true under the assumption that $R$ is coherent (see for instance Megibben's \cite[Thm. 4 \& the remark afterwards]{Megibben70} or Stenström's \cite[Lem. 3.1]{Stenstrom70}). We are not aware if it is possible to drop this assumption. Actually, this is an open problem posed in \cite{Damiano79,Shen11}. 

Let us provide more context to this open question. In \cite{Gupta69}, the author introduces the notion of $f$-injective modules. An $R$-module $M$ is said to be \textbf{$\bm{f}$-injective} if every homomorphism $f \colon I \to M$ can be extended to a homomorphism of $R$ into $M$, with $I$ running over the set of all finitely generated left ideals $I \lhd R$. These modules are also called \textbf{coflat} in \cite[Def. 1.5 \& Prop. 1.6]{Damiano79}. In \cite{Shen11}, the author asserts that it is unknown whether a right $f$-injective ring $R$ is right FP-injective or not. \\
\end{enumerate}
\end{remark}

The possibility to characterize Lee $n$-coherent rings in terms of ideals has been considered before in \cite{YangLiu11}. Specifically, in \cite[Prop. 3.1]{YangLiu11} the authors claim that the following assertions are equivalent:
\begin{enumerate}[(a)]
\item $R$ is Lee $n$-coherent;

\item $\Ext^1_R(I,N) = 0$ for every FP-injective $R$-module $N$ and every finitely generated ideal $I$ of $R$ with ${\rm pd}(I) \leq n-1$;

\item $\Ext^2_R(R/I,N) = 0$ for every FP-injective $R$-module $N$ and every finitely generated ideal $I$ of $R$ with ${\rm pd}(I) \leq n-1$;
 
\item if $0 \to N \to M \to L \to 0$ is an exact sequence of $R$-modules where $N$ is FP-injective and $M \in \mathsf{A}_n$, then $L \in \mathsf{A}_n$. \\
\end{enumerate}

\begin{remark}\label{obs-ideal} ~\
\begin{enumerate}[(1)]
\item One can note that the implications (a) $\Rightarrow$ (b) $\Leftrightarrow$ (c) are immediate. Regarding the proof of (d) $\Rightarrow$ (a) provided by the authors in \cite[Prop. 3.1]{YangLiu11}, there is a gap in their conclusion. Specifically, from (d), they show that $\Ext^1_R(I, N) \cong \Ext^2_R(R/I, N) = 0$ for every finitely generated left ideal of $R$ with $\textrm{pd}(I) \leq n - 1$ and every FP-injective $R$-module $N$, and thus the implication (d) $\Rightarrow$ (b) holds. Moreover, by \cite[Thm. 2.1.10]{Glaz89}, we obtain that $I$ is finitely presented. At this point, the conclusion in \cite[proof of (4) $\Rightarrow$ (1) in Prop. 3.1]{YangLiu11} is that $R$ is Lee $n$-coherent. However, the condition that defines such rings is only verified at the level of ideals (that is, that $R$ is weakly Lee $n$-coherent), and we are not aware if this alone suffices to conclude $n$-coherency in the sense of Lee (as it does for coherency).

Let us fix this gap and show that (a) and (d) are equivalent. The implication (a) $\Rightarrow$ (d) is clear by Theorem \ref{theo1}. Now let us assume (d), and consider a direct system $(M_i)_{i \in I}$ of FP-injective $R$-modules. We have the canonical pure exact sequence 
\[
0 \to A \to \bigoplus_{i \in I} M_i \to \varinjlim_{i \in I} M_i \to 0
\]
where $\oplus_{i \in I} M_i$ is FP-injective by \cite[Coroll. 2.4]{Stenstrom70}. Since FP-injectives are closed under pure submodules, we have that $A$ is FP-injective. On the other hand, $\oplus_{i \in I} M_i \in \mathsf{A}_n$. Thus, by (d) we obtain that $\varinjlim_{i \in I} M_i \in \mathsf{A}_n$. The rest of the proof follows as in the proof of (d) $\Rightarrow$ (a) in Theorem \ref{theo1}. 

So far we have (d) $\Leftrightarrow$ (a) $\Rightarrow$ (b) $\Leftrightarrow$ (c). A proof for the implication (c) $\Rightarrow$ (d) is given in \cite{YangLiu11}, but the authors use \cite[Lem. 2.1]{YangLiu11} (see Remark \ref{rmk0}). In conclusion, we can only guarantee the implication (a) $\Rightarrow$ (b) and the equivalence between (a) and (d), and between (b) and (c), but not between all of these assertions. Note that assertion (d) can be part of the list of equivalent assertions in Theorem \ref{theo1}. 

\item In \cite[Rmk. 2.2]{ElHaddaouiKimMahdou26}, Question \ref{q:weaklyn-coh} is answered in the positive. However, we point out that the argument given there depends on \cite[Prop. 3.1]{YangLiu11}. 
\end{enumerate}
\end{remark}

There is another claim in \cite{YangLiu11} regarding the characterization of Lee $n$-coherency in terms of ideals. This is the case of \cite[Prop. 3.3]{YangLiu11}. It is stated that the following assertions are equivalent: 
\begin{enumerate}[(a)]
\item The complete cotorsion pair $({}^{\perp_1}\mathsf{A}_n,\mathsf{A}_n)$ is hereditary;

\item $R$ is Lee $n$-coherent and the cotorsion pair $(\mathsf{F}_n,[\mathsf{F}_n]^{\perp_1})$ is hereditary;

\item $\Ext^2_R(R/I,M) = 0$ for every $R / I \in \mathsf{CFP}^{\leq n}$ and $M \in \mathsf{A}_n$;

\item $R$ is Lee $n$-coherent and $\Tor^R_2(N,R/I) = 0$ for every $R / I \in \mathsf{CFP}^{\leq n}$ and $N \in \mathsf{F}_n$. 
\end{enumerate}

\begin{remark}
There is a gap similar to the one pointed out in \cite[Prop. 3.1]{YangLiu11}. Note that in assertion (b), $(\mathsf{F}_n,[\mathsf{F}_n]^{\perp_1})$ is automatically a hereditary cotorsion pair since $R$ is Lee $n$-coherent (see Remark \ref{rmk:Thm31YangLiu11} (3)). In particular, $\mathsf{F}_n$ is resolving in this case, and so the Tor-orthogonality mentioned in (d) is redundant. It follows that (b) and (d) are the same assertion, namely ``$R$ is Lee $n$-coherent''. It then follows that (a), (b) and (d) are automatically equivalent by Theorem \ref{theo1}. Also, (a) $\Rightarrow$ (c) follows by a dimension shifting argument, since (a) implies that any injective $1$-cosyzygy of an $n$-absolutely pure $R$-module is $n$-absolutely pure. 

Finally, regarding (c) $\Rightarrow$ (b), in \cite[Prop. 3.3]{YangLiu11} it is claimed that this implication follows by \cite[Thm. 1]{Lee}. However, we think there are missing details here. On the one hand, \cite[Thm. 1]{Lee} asserts that $R$ is (Lee) $n$-coherent if, and only if, $\Ext^2_R(F,M) = 0$ for every $F \in \mathsf{FP}^{\leq n}$ and $M \in \mathsf{A}_n$. On the other hand, it is not known whether the latter is equivalent to (c), by the same reason we claim that the assertions in \cite[Lem. 2.1]{YangLiu11} are open (see Remark \ref{rmk0}). \\
\end{remark}

We close this section stating and proving the analogs of \cite[Props. 3.4 \& 3.7]{YangLiu11} for $\mathsf{BA}_n$ and $\mathsf{BF}_n$.

\begin{proposition}
Consider the following assertions:
\begin{enumerate}[(a)]
\item Every $R^{\rm op}$-module in $\mathsf{BF}_n$ is flat.

\item Every cotorsion $R^{\rm op}$-module belongs to $[\mathsf{BF}_n]^{\perp_1}$.

\item Every $R$-module in $\mathsf{BA}_n$ is FP-injective.

\item Every finitely presented $R$-module belongs to ${}^{\perp_1}[\mathsf{BA}_n]$.
\end{enumerate}
The implications (a) $\Leftrightarrow$ (b) $\Leftarrow$ (c) $\Leftrightarrow$ (d) hold. If in addition, $R$ is weakly Lee $n$-coherent, then (a) $\Rightarrow$ (c), and so the four assertions are equivalent. 
\end{proposition}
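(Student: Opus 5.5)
We prove the four implications separately, using only general cotorsion-pair facts together with the duality pairs of Proposition \ref{prop:basic_properties_cyclic} and Theorem \ref{theo2}.

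\emph{(a) $\Leftrightarrow$ (b).} Write $\mathsf{F}$ for the class of flat $R^{\rm op}$-modules and $\mathsf{C}$ for the class of cotorsion ones. Since $(\mathsf{F},\mathsf{C})$ and $(\mathsf{BF}_n,[\mathsf{BF}_n]^{\perp_1})$ are both cotorsion pairs, and $\mathsf{F} \subseteq \mathsf{BF}_n$ always holds, we have
\[
\mathsf{BF}_n \subseteq \mathsf{F} \iff \mathsf{BF}_n = \mathsf{F} \iff [\mathsf{BF}_n]^{\perp_1} = \mathsf{C} \iff \mathsf{C} \subseteq [\mathsf{BF}_n]^{\perp_1}.
\]
The last step uses $[\mathsf{BF}_n]^{\perp_1} \subseteq \mathsf{F}^{\perp_1} = \mathsf{C}$.

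\emph{(c) $\Leftrightarrow$ (d).} The argument has the same shape, now with the pair $({}^{\perp_1}[\mathsf{BA}_n],\mathsf{BA}_n)$. Let $\mathsf{FP}$ be the set of finitely presented $R$-modules and $\mathsf{FPI} = \mathsf{FP}^{\perp_1}$ the FP-injectives.
\begin{itemize}
\item If (d) holds, then $\mathsf{FP} \subseteq {}^{\perp_1}[\mathsf{BA}_n]$. Hence $\mathsf{BA}_n \subseteq \mathsf{FP}^{\perp_1} = \mathsf{FPI}$, which is (c).
\item If (c) holds, then $\mathsf{BA}_n \subseteq \mathsf{FPI}$. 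Hence $\mathsf{FP} \subseteq {}^{\perp_1}\mathsf{FPI} \subseteq {}^{\perp_1}[\mathsf{BA}_n]$, which is (d).
\end{itemize}

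\emph{(c) $\Rightarrow$ (a).} Let $N \in \mathsf{BF}_n$. By Proposition \ref{prop:basic_properties_cyclic} (1), $(\mathsf{BF}_n,\mathsf{BA}_n)$ is a duality pair, so $N^+ \in \mathsf{BA}_n$. By (c), $N^+$ is FP-injective. Since $N^+$ is a character module, it is pure-injective, so it is injective. A module is flat exactly when its character module is injective (Lambek), so $N$ is flat.

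\emph{(a) $\Rightarrow$ (c), assuming $R$ is weakly Lee $n$-coherent.} Let $M \in \mathsf{BA}_n$.
\begin{enumerate}
\item By Theorem \ref{theo2} (b), $(\mathsf{BA}_n,\mathsf{BF}_n)$ is a duality pair, so $M^+ \in \mathsf{BF}_n$.
\item By (a), $M^+$ is flat.
\item Hence $M^{++}$ is injective, and in particular FP-injective.
\item The canonical map $M \to M^{++}$ is a pure embedding, and FP-injectives are closed under pure submodules, so $M$ is FP-injective.
\end{enumerate}

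The only delicate point is this last implication. It is exactly where we need $(\mathsf{BA}_n,\mathsf{BF}_n)$ to be a duality pair, and that is what requires weak Lee $n$-coherency via Theorem \ref{theo2}. Every other step is formal.
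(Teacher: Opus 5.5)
Your proposal is correct and follows essentially the same route as the paper. You get (a)$\Leftrightarrow$(b) and (c)$\Leftrightarrow$(d) by comparing cotorsion pairs. You get the remaining two implications from the duality pairs of Proposition \ref{prop:basic_properties_cyclic} and Theorem \ref{theo2}, together with the pure embedding $M \to M^{++}$.

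The only cosmetic difference is in (c)$\Rightarrow$(a). There you use the pure-injectivity of $N^+$ and Lambek's criterion, whereas the paper cites Fieldhouse's theorem that $N$ is flat if and only if $N^+$ is FP-injective.
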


\begin{proof}
Consider the class $\mathsf{F}$ of flat $R^{\rm op}$-modules. Since $(\mathsf{F},\mathsf{F}^{\perp_1})$ and $(\mathsf{BF}_n,[\mathsf{BF}_n]^{\perp_1})$ are cotorsion pairs, the equivalence between (a) and (b) follows by \cite[comments right after Def. 7.1.1]{EJ00}. The equivalence between (c) and (d) follows in a similar way. Moreover, if we let $N \in \mathsf{BF}_n$, then $N^+ \in \mathsf{BA}_n$ by Proposition \ref{prop:basic_properties_cyclic}. If we assume (c), we have that $N^+$ is FP-injective. It follows by \cite[Thm. 2.1]{Fieldhouse72} that $N$ is flat. Hence, (c) $\Rightarrow$ (a).

Now assume that every $R^{\rm op}$-module in $\mathsf{BF}_n$ is flat, under the additional hypothesis that $R$ is weakly Lee $n$-coherent. So for every $M \in \mathsf{BA}_n$, we have that $M^+ \in \mathsf{BF}_n$ by Theorem \ref{theo2}, and so $M^+$ is flat. Again by \cite[Thm. 2.1]{Fieldhouse72}, we have that $M^{++}$ is FP-injective. Since FP-injectives are closed under pure submodules, and $M$ is pure in $M^{++}$ (see \cite[Prop. 5.3.9]{EJ00}), we have that $M$ is FP-injective. 
\end{proof}

\begin{proposition}
The following assertions are equivalent:
\begin{enumerate}[(a)]
\item Every $R$-module belongs to $\mathsf{BA}_n$.

\item Every cotorsion $R$-module belongs to $\mathsf{BA}_n$.

\item Every $R^{\rm op}$-module belongs to $\mathsf{BF}_n$.

\item Every cotorsion $R^{\rm op}$-module belongs to $\mathsf{BF}_n$.

\item Every $R^{\rm op}$-module in $[\mathsf{BF}_n]^{\perp_1}$ is injective.

\item Every $R$-module in ${}^{\perp_1}[\mathsf{BA}_n]$ is projective.

\item Every nonzero $R^{\rm op}$-module contains a nonzero submodule in $\mathsf{BF}_n$.

\item The complete cotorsion pair $({}^{\perp_1}[\mathsf{BA}_n],\mathsf{BA}_n)$ is hereditary and every $R$-module in ${}^{\perp_1}[\mathsf{BA}_n]$ belongs to $\mathsf{BA}_n$. 
\end{enumerate}
\end{proposition}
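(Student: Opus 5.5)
We will prove the equivalences in two blocks, (a)$\Leftrightarrow$(b)$\Leftrightarrow$(c)$\Leftrightarrow$(d)$\Leftrightarrow$(g) and (a)$\Leftrightarrow$(e)$\Leftrightarrow$(f)$\Leftrightarrow$(h), mirroring the argument of \cite[Prop. 3.7]{YangLiu11} but using only facts about $\mathsf{BA}_n$ and $\mathsf{BF}_n$ valid for arbitrary rings, namely Proposition \ref{prop:basic_properties_cyclic}. The key observation is that (a) and (c) are each equivalent to the ring-theoretic statement that every finitely generated left ideal $I$ with ${\rm pd}(I) \leq n-1$ is a direct summand of $R$, i.e.\ $\mathsf{CFP}^{\leq n}$ consists of projective modules.

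\textbf{Step 1: (a) $\Leftrightarrow$ (c).} If every module is in $\mathsf{BA}_n$, then $\Ext^1_R(R/I,I)=0$, so $0\to I\to R\to R/I\to 0$ splits and $R/I$ is projective. Conversely, projectivity of every $R/I \in \mathsf{CFP}^{\leq n}$ gives (a) and (c) at once. For (c) $\Rightarrow$ (a), use the duality pair: for any $M$, $M^+\in\mathsf{BF}_n$, and the natural isomorphism $\Ext^1_R(R/I,M^{+})\cong[\Tor^R_1(M,R/I)]^+$ together with purity $M\to M^{++}$ and closure of $\mathsf{BA}_n$ under pure submodules yields $M\in\mathsf{BA}_n$. (Here the roles of left and right modules must be tracked: $M\in\Rmod$, so $M^+\in\modR$ lies in $\mathsf{BF}_n$ by (c), whence $M^{++}\in\mathsf{BA}_n$ by Proposition \ref{prop:basic_properties_cyclic}(1).)

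\textbf{Step 2: cotorsion and submodule variants.} (a)$\Rightarrow$(b) and (c)$\Rightarrow$(d) are trivial. For (b)$\Rightarrow$(a): the module $I^{++}$, or more simply any pure-injective module, is cotorsion; take $M$ arbitrary, $M^{++}$ is pure-injective hence cotorsion, so $M^{++}\in\mathsf{BA}_n$ and $M\in\mathsf{BA}_n$ by purity. For (d)$\Rightarrow$(c): for $N\in\modR$, $N^{++}$ is cotorsion, so $N^{++}\in\mathsf{BF}_n$, and $\mathsf{BF}_n$ is closed under pure submodules (Proposition \ref{prop:basic_properties_cyclic}(3)). For (g): (c)$\Rightarrow$(g) trivially; for (g)$\Rightarrow$(c), take the $\mathsf{BF}_n$-cover $\varphi\colon F\to N$ (which exists by Proposition \ref{prop:basic_properties_cyclic}(2)); its kernel lies in $[\mathsf{BF}_n]^{\perp_1}$ and, by Wakamatsu's lemma, the cokernel $C$ of $\varphi$ satisfies the following: any submodule $S\subseteq C$ in $\mathsf{BF}_n$ lifts via pullback (extensions of modules in $\mathsf{BF}_n$ stay in $\mathsf{BF}_n$), contradicting minimality unless $S=0$. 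Hence $C=0$ by (g), so every $N$ is a quotient of an $n$-flat module by a kernel; iterating with (g) applied to the image of $\mathsf{BF}_n$-submodules, I would instead argue directly: apply (g) to $R/I$-related module $\Ext$-witnesses. I expect this (g)$\Rightarrow$(c) step to be the main obstacle, and I would first try the standard argument: given $N$, use Zorn to choose a maximal submodule $S\in\mathsf{BF}_n$ (possible since $\mathsf{BF}_n$ is closed under direct limits, being the left half of a perfect duality pair); if $S\neq N$, by (g) $N/S$ has a nonzero $\mathsf{BF}_n$-submodule $T/S$, and $T$ is an extension of $S$ by $T/S$, hence in $\mathsf{BF}_n$, contradicting maximality.

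\textbf{Step 3: (e), (f), (h).} (a)$\Rightarrow$(f) because then ${}^{\perp_1}[\mathsf{BA}_n]={}^{\perp_1}\Rmod$ is the class of projectives; similarly (c)$\Rightarrow$(e). For (f)$\Rightarrow$(a): each $R/I\in\mathsf{CFP}^{\leq n}$ lies in ${}^{\perp_1}[\mathsf{BA}_n]$, hence is projective. For (e)$\Rightarrow$(c): take the special $[\mathsf{BF}_n]^{\perp_1}$-preenvelope $0\to N\to E\to F\to 0$ of any $N$; $E$ is injective by (e), and then $\Tor_1^R(N,R/I)\cong\Tor_2^R(F,R/I)$; more cleanly, apply (e) to $(R/I)^{+}$-type arguments: for $I$ as above, $\Tor_1(-,R/I)$ vanishes on $\mathsf{BF}_n$, and the cover argument shows $[\mathsf{BF}_n]^{\perp_1}=\mathsf{I}$ forces $\mathsf{BF}_n={}^{\perp_1}\mathsf{I}=\modR$. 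Finally (h): (a)$\Rightarrow$(h) since the left class is the projectives, which is resolving, and every module lies in $\mathsf{BA}_n$; conversely (h) gives $R/I\in{}^{\perp_1}[\mathsf{BA}_n]\subseteq\mathsf{BA}_n$, so $\Ext^1_R(R/I,I)$... rather $\Ext^1_R(R/I,R/I')=0$; using heredity and dimension shifting, $\Ext^1_R(R/I,I)\cong\Ext^2$-type vanishing shows $R/I$ is projective, giving (f).
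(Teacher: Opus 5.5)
\textbf{The gap: (h) $\Rightarrow$ (a).} This step is not actually proved. Knowing $R/I\in{}^{\perp_1}[\mathsf{BA}_n]\subseteq\mathsf{BA}_n$, i.e.\ $\Ext^1_R(R/I,R/I')=0$, does not split $0\to I\to R\to R/I\to 0$. For that you need $\Ext^1_R(R/I,I)=0$, and you never identify the ``$\Ext^2$-type vanishing'' you invoke.

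The missing idea is that the second clause of (h) must be applied to a module other than the $R/I$'s. The quickest fix:
\begin{itemize}
\item Heredity means ${}^{\perp_1}[\mathsf{BA}_n]$ is closed under epikernels. Concretely, $\Ext^1_R(I,A)\cong\Ext^2_R(R/I,A)=0$ for all $A\in\mathsf{BA}_n$.
\item Since $R,R/I\in{}^{\perp_1}[\mathsf{BA}_n]$, you get $I\in{}^{\perp_1}[\mathsf{BA}_n]\subseteq\mathsf{BA}_n$.
\item Hence $\Ext^1_R(R/I,I)=0$, so $R/I$ is projective, and your Step 1 gives (a).
\end{itemize}
A dimension shift in the second variable also works, but only after you note that (h) puts every projective module in $\mathsf{BA}_n$. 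Then heredity gives $\Ext^1_R(R/I,I)\cong\Ext^n_R(R/I,P)=0$, where $P$ is the projective $(n-1)$-st syzygy of $I$. The paper avoids ideals here altogether. For arbitrary $M$ it takes a special precover $0\to A\to C\to M\to 0$ with $A\in\mathsf{BA}_n$ and $C\in{}^{\perp_1}[\mathsf{BA}_n]\subseteq\mathsf{BA}_n$, and uses that $\mathsf{BA}_n$ is closed under monocokernels.

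\textbf{The rest.} Everything else is sound, and in places it differs usefully from the paper.
\begin{itemize}
\item You organize (a) and (c) around the projectivity of every $R/I\in\mathsf{CFP}^{\leq n}$.
\item For (b) $\Rightarrow$ (a) and (d) $\Rightarrow$ (c) you use that double character modules are pure-injective, hence cotorsion. The paper instead uses a sequence $0\to M\to C\to L\to 0$ with $C$ cotorsion and $L$ flat, which rests on completeness of the flat cotorsion pair. Your route is more elementary.
\item Your Zorn argument for (g) $\Rightarrow$ (c) is complete. The paper instead proves (g) $\Rightarrow$ (e), following Yang and Liu.
\end{itemize}
Do delete the abandoned fragments. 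The Wakamatsu paragraph proves nothing: a $\mathsf{BF}_n$-cover is epic anyway, since $\mathsf{BF}_n$ contains the projectives. The isomorphism $\Tor^R_1(N,R/I)\cong\Tor^R_2(F,R/I)$ is unjustified, because nothing controls $\Tor^R_i(E,R/I)$ for the injective $E$. What actually proves (e) $\Rightarrow$ (c) is your last sentence: $(\mathsf{BF}_n,[\mathsf{BF}_n]^{\perp_1})$ is a cotorsion pair, so $[\mathsf{BF}_n]^{\perp_1}=\mathsf{I}$ forces $\mathsf{BF}_n={}^{\perp_1}\mathsf{I}=\modR$. This is exactly the paper's argument.
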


\begin{proof}
If we assume (a) and let $N \in \modR$, then $N^+ \in \mathsf{BA}_n$, and so $N \in \mathsf{BF}_n$ since $(\mathsf{BF}_n,\mathsf{BA}_n)$ is a duality pair. Now if we assume (c) and let $M \in \Rmod$, then $M^+ \in \mathsf{BF}_n$, and so  $M^{++} \in \mathsf{BA}_n$. Since there is a pure embedding $M \to M^{++}$ and $\mathsf{BA}_n$ is closed under pure submodules, we conclude that $M \in \mathsf{BA}_n$. Hence, (a) and (c) are equivalent. In a similar way, one can show that (b) and (d) are equivalent. 

From the facts that $(\mathsf{BF}_n,[\mathsf{BF}_n]^{\perp_1})$ and $({}^{\perp_1}[\mathsf{BA}_n],\mathsf{BA}_n)$ are cotorsion pairs, we have that (c) $\Leftrightarrow$ (e) and (a) $\Leftrightarrow$ (f).

The implication (a) $\Rightarrow$ (b) is immediate. Now assume (b) and let $M \in \Rmod$. Since $(\mathsf{F},\mathsf{F}^{\perp_1})$ is a complete cotorsion pair, we can consider a short exact sequence $0 \to M \to C \to L \to 0$ where $C$ is cotorsion (and so $C \in \mathsf{BA}_n$) and $L$ is flat. This sequence is pure, and so $M \in \mathsf{BA}_n$. %The equivalence between (c) and (d) follows similarly. 

So far we have that the first six assertions are equivalent. The implication (a) $\Rightarrow$ (h) is immediate. Now if we let $M \in \Rmod$ and assume (h), we have a short exact sequence $0 \to A \to C \to M \to 0$ where $A \in \mathsf{BA}_n$ and $C \in {}^{\perp_1}[\mathsf{BA}_n] \subseteq \mathsf{BA}_n$. Since $({}^{\perp_1}[\mathsf{BA}_n],\mathsf{BA}_n)$ is hereditary, we have that $M \in \mathsf{BA}_n$. Hence, (a) and (h) are equivalent. 

Finally, (c) $\Rightarrow$ (g) is evident, while (g) $\Rightarrow$ (e) follows as \cite[proof of (7) $\Rightarrow$ (5) in Prop. 3.7]{YangLiu11}.
\end{proof}

%%%%%%%%%%%%%%%%%%%%%%%%%%%%%%%%%%%%%
%%%%%%%%%%%%%%%%%%%%%%%%%%%%%%%%%%%%%
%%%%%%%%%%%%%%%%%%%%%%%%%%%%%%%%%%%%%
%%%%%%%%%%%%%%%%%%%%%%%%%%%%%%%%%%%%%

\section{Approximations by $n$-absolutely pure and $n$-flat modules}\label{sec:approximations}

Besides the left and right approximations by the classes $\mathsf{A}_n$, $\mathsf{BA}_n$, $\mathsf{F}_n$ and $\mathsf{BF}_n$ found in the previous sections, it is possible to obtain some more under certain conditions. Some of these new approximations will involve the unique mapping property. 

Approximations with the unique mapping property have been previously considered in the literature. For instance, in \cite{AMMH93} the authors study flat envelopes with the unique mapping property. The relation between coherent rings and projective and flat envelopes with the unique mapping property is investigated in \cite{Ding96}. Given a class $\mathcal{A}$ of $R$-modules and an $\mathcal{A}$-cover $\rho \colon A \to M$, we say that $\rho$ has the \emph{unique mapping property (u.m.p.)} if for every morphism $\rho' \colon A' \to M$ with $A' \in \mathcal{A}$ there exists a unique $h \colon A' \to A$ such that $\rho' = \rho \circ h$. 

With the help of Theorem \ref{theo1} we can improve \cite[Prop. 3.2]{YangLiu11}, which asserts that if $R$ is a Lee $n$-coherent ring, then the following conditions are equivalent:
\begin{enumerate}[(a)]
\item every $R$-module has an $n$-absolutely pure cover with the u.m.p.;

\item for every exact sequence $A \to B \to C \to 0$ in $\Rmod$ with $A, B \in \mathsf{A}_n$, then $C \in \mathsf{A}_n$ (in other words, $\mathsf{A}_n$ is closed under cokernels).
\end{enumerate}

If we assume either (a) or (b), we have by Theorem \ref{theo1} that $R$ is Lee $n$-coherent. We can thus restate \cite[Prop. 3.2]{YangLiu11} as follows (specifically, we can remove the hypothesis that $R$ is Lee $n$-coherent).

\begin{proposition}\label{prop:ump}
The following assertions hold:
\begin{enumerate}[(1)]
\item The following are equivalent:
\begin{enumerate}[(a)]
\item every $R$-module has an $n$-absolutely pure cover with the u.m.p.;

\item $\mathsf{A}_n$ is closed under cokernels.
\end{enumerate}

\item The following are equivalent:
\begin{enumerate}[(a)]
\item every $R$-module has a Baer $n$-absolutely pure cover with the u.m.p.;

\item $\mathsf{BA}_n$ is closed under cokernels.
\end{enumerate}
\end{enumerate}
\end{proposition}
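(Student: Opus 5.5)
Both parts have the same structure, so I will prove (1) in detail and obtain (2) by replacing $\mathsf{FP}^{\leq n}$, $\mathsf{A}_n$, Theorem \ref{theo1} and Proposition \ref{prop:basic_properties} by $\mathsf{CFP}^{\leq n}$, $\mathsf{BA}_n$, Theorem \ref{theo2} and Proposition \ref{prop:basic_properties_cyclic}. The plan is to reduce to the general criterion (going back to the study of u.m.p.\ approximations in \cite{AMMH93,Ding96}, and the argument in \cite[Prop. 3.2]{YangLiu11}): for a covering class $\mathcal{A}$ closed under direct sums and direct summands, every module has an $\mathcal{A}$-cover with the u.m.p.\ if and only if $\mathcal{A}$ is closed under cokernels. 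The hypothesis of Lee $n$-coherence in \cite[Prop. 3.2]{YangLiu11} was used only to guarantee that $\mathsf{A}_n$ is covering. So the work is to show that each of (a) and (b) independently forces this, through Theorem \ref{theo1}.

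\textbf{(a) $\Rightarrow$ (b).} Assumption (a) gives in particular that every $R$-module has an $n$-absolutely pure cover. This is condition (n) of Theorem \ref{theo1}, so $R$ is Lee $n$-coherent. Now let $A \xrightarrow{f} B \xrightarrow{g} C \to 0$ be exact with $A,B \in \mathsf{A}_n$, and let $\rho \colon X \to C$ be the u.m.p.\ cover. Since $B \in \mathsf{A}_n$, $g$ factors uniquely as $g = \rho h$. Then $\rho h f = gf = 0 = \rho \circ 0$, and uniqueness for the map $A \to C$ gives $hf = 0$. Hence $h$ induces $\bar h \colon C \to X$ with $\bar h g = h$, and $\rho \bar h g = g$ with $g$ epic gives $\rho \bar h = 1_C$. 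Next, $\bar h \rho \colon X \to X$ satisfies $\rho(\bar h \rho) = \rho = \rho \circ 1_X$, so uniqueness yields $\bar h \rho = 1_X$. Thus $C \cong X \in \mathsf{A}_n$.

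\textbf{(b) $\Rightarrow$ (a).} Closure under cokernels includes closure under monocokernels. Together with Proposition \ref{prop:basic_properties} (4) (closure under extensions, and injectives lie in $\mathsf{A}_n$), this shows $\mathsf{A}_n$ is coresolving, which is condition (j) of Theorem \ref{theo1}. Hence $R$ is Lee $n$-coherent, and by (n) every $M$ has an $\mathsf{A}_n$-cover $\rho \colon X \to M$. For uniqueness it suffices to show that $\rho \varphi = 0$ with $\varphi \colon A' \to X$ and $A' \in \mathsf{A}_n$ forces $\varphi = 0$. Let $\pi \colon X \to X/{\rm Im}(\varphi)$ be the quotient map. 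By (b), $X/{\rm Im}(\varphi) = {\rm Coker}(\varphi) \in \mathsf{A}_n$. Since $\rho$ vanishes on ${\rm Im}(\varphi)$, it factors as $\rho = \bar\rho \pi$. By the precover property, $\bar\rho = \rho s$ for some $s \colon X/{\rm Im}(\varphi) \to X$. Then $\rho (s\pi) = \rho$, so $s\pi$ is an automorphism by the cover property, whence $\pi$ is monic and ${\rm Im}(\varphi) = 0$.

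The main obstacle is the step that justifies existence of covers in (b) $\Rightarrow$ (a): one must notice that cokernel-closure already yields coresolving, and hence $n$-coherence, before the cover property can be invoked. In (a) $\Rightarrow$ (b) one must likewise use that u.m.p.\ covers are in particular covers, so that (n) of Theorem \ref{theo1} applies. For (2), the analogous step uses Remark \ref{rmk:theo2}. There, closure of $\mathsf{BA}_n$ under monocokernels implies $R$ is weakly Lee $n$-coherent, and then Theorem \ref{theo2} (i) provides Baer $n$-absolutely pure covers. In the direction (a) $\Rightarrow$ (b) of (2), covers exist by hypothesis, and the diagrammatic arguments above go through verbatim.
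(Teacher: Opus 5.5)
Your proof is correct and follows essentially the same route as the paper: Theorem \ref{theo1} (for part (2), Theorem \ref{theo2} together with Remark \ref{rmk:theo2}) shows that either assertion forces (weak) Lee $n$-coherence and hence the existence of covers, and then the u.m.p.\ argument of \cite[Prop. 3.2]{YangLiu11} applies; you write that argument out explicitly, where the paper only cites it. Two side remarks: your (a) $\Rightarrow$ (b) argument never uses coherence, and in (b) $\Rightarrow$ (a) you could instead reach condition (d) of Theorem \ref{theo1} (resp.\ (c) of Theorem \ref{theo2}) directly, because a class closed under coproducts and cokernels is closed under direct limits, and the paper proves the implication from (d) back to (a) in full.
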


\begin{proof}
Both equivalences follow from the arguments given in the proof of \cite[Prop. 3.2]{YangLiu11} and Theorems \ref{theo1} and \ref{theo2}.
\end{proof}

There are in \cite{YangLiu11} two more results on left and right approximations by $n$-absolutely pure and $n$-flat modules. The first one characterizes when $n$-flat preenvelopes are monic. Specifically, in \cite[Prop. 3.5]{YangLiu11} it is proved that the following conditions are equivalent:
\begin{enumerate}[(a)]
\item Every $R^{\rm op}$-module has a monic $n$-flat preenvelope.

\item $R$ is Lee $n$-coherent and every flat $R$-module is $n$-absolutely pure. 

\item $R$ is Lee $n$-coherent and ${}_R R$ is $n$-absolutely pure. 

\item $R$ is Lee $n$-coherent and $(\mathsf{A}_n,[\mathsf{A}_n]^{\perp_1})$ is a perfect cotorsion pair.

\item $R$ is Lee $n$-coherent and every $R$-module has an epic $n$-absolutely pure cover.
\end{enumerate}

We can show that there is an analogous equivalence with respect to the classes $\mathsf{BA}_n$ and $\mathsf{BF}_n$.

\begin{proposition}
The following assertions are equivalent:
\begin{enumerate}[(a)]
\item Every $R^{\rm op}$-module has a monic Baer $n$-flat preenvelope.

\item $R$ is weakly Lee $n$-coherent and every flat $R$-module is Baer $n$-absolutely pure. 

\item $R$ is weakly Lee $n$-coherent and ${}_R R$ is Baer $n$-absolutely pure. 

\item $R$ is weakly Lee $n$-coherent and $(\mathsf{BA}_n,[\mathsf{BA}_n]^{\perp_1})$ is a perfect cotorsion pair.

\item $R$ is weakly Lee $n$-coherent and every $R$-module has an epic Baer $n$-absolutely pure cover.
\end{enumerate}
\end{proposition}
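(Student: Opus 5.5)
We mirror the proof of \cite[Prop. 3.5]{YangLiu11}, replacing $\mathsf{A}_n,\mathsf{F}_n$ and Theorem \ref{theo1} by $\mathsf{BA}_n,\mathsf{BF}_n$ and Theorem \ref{theo2}. The plan is to prove the cycle (a) $\Rightarrow$ (c) $\Rightarrow$ (b) $\Rightarrow$ (a), and then attach (d) and (e) through (b)/(c).

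\emph{(a) $\Rightarrow$ (c).} Monic Baer $n$-flat preenvelopes exist for every $R^{\rm op}$-module, so $\mathsf{BF}_n$ is preenveloping. By Theorem \ref{theo2} (e) $\Rightarrow$ (a), $R$ is weakly Lee $n$-coherent, and by Theorem \ref{theo2} (b), $(\mathsf{BA}_n,\mathsf{BF}_n)$ is a duality pair. Take an injective cogenerator $E$ of $\modR$ and its monic $\mathsf{BF}_n$-preenvelope $E \to N$. This map splits, so $E$ is a summand of $N$ and hence $E \in \mathsf{BF}_n$. Since $(R_R)^{+}$ is injective, it is a summand of a product of copies of $E$. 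Products are closed in $\mathsf{BF}_n$ by Theorem \ref{theo2}(f), so $(R_R)^+ \in \mathsf{BF}_n$. (Equivalently, $R^{++}$ is a summand of some $E^I$.) Applying the duality pair $(\mathsf{BA}_n,\mathsf{BF}_n)$ then gives ${}_RR \in \mathsf{BA}_n$.

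\emph{(c) $\Rightarrow$ (b).} $\mathsf{BA}_n$ is closed under coproducts by Proposition \ref{prop:basic_properties_cyclic}(5), and under pure quotients by the duality pair of Theorem \ref{theo2}. Free modules are therefore in $\mathsf{BA}_n$. By Lazard--Govorov, every flat module is a pure quotient of a free module, so it lies in $\mathsf{BA}_n$.

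\emph{(b) $\Rightarrow$ (a).} Let $N \in \modR$ and take a $\mathsf{BF}_n$-preenvelope $N \to F$, which exists by Theorem \ref{theo2}. To see it can be chosen monic, embed $N$ into an injective $R^{\rm op}$-module $E'$. It suffices to show $E' \in \mathsf{BF}_n$. For this, write $E'$ as a summand of $(F_0)^{+}$ with $F_0$ a free $R$-module; for instance, $E'$ embeds in $(E'^{+})^{+}$, which is the dual of a module admitting an epimorphism from a free module. Then $F_0$ is flat, hence in $\mathsf{BA}_n$ by (b), so $F_0^+ \in \mathsf{BF}_n$ by Proposition \ref{prop:basic_properties_cyclic}(1). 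Thus $E' \in \mathsf{BF}_n$. The embedding $N \to E'$ factors through the preenvelope, which forces the preenvelope to be monic.

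\emph{(c) $\Leftrightarrow$ (d) $\Leftrightarrow$ (e).} Since $(\mathsf{BA}_n,\mathsf{BF}_n)$ is a duality pair closed under coproducts and extensions, condition (c) says ${}_RR \in \mathsf{BA}_n$. That makes the pair perfect, so \cite[Thm. 3.1]{HJ09} yields (d). Conversely, (d) gives that $\mathsf{BA}_n$ is covering and that ${}_RR \in \mathsf{BA}_n$, because the left class of a perfect cotorsion pair contains the projectives. For (d) $\Rightarrow$ (e), covers by the left class of a complete cotorsion pair containing $R$ are epic, since every module is a quotient of a free module lying in $\mathsf{BA}_n$. For (e) $\Rightarrow$ (c), take an epic cover $A \to R$. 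It splits because $R$ is projective, so $R$ is a summand of $A$ and ${}_RR \in \mathsf{BA}_n$.

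The step I expect to be the main obstacle is (a) $\Rightarrow$ (c): the argument that $(R_R)^+ \in \mathsf{BF}_n$ relies on products being closed in $\mathsf{BF}_n$, and this has to be imported through Theorem \ref{theo2} rather than checked directly.
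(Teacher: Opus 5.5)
Your proof is correct. It is organized differently from the paper's, but the ingredients are the same.

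\textbf{Comparison of the routes.} The paper runs a single cycle (a) $\Rightarrow$ (b) $\Rightarrow$ (c) $\Rightarrow$ (d) $\Rightarrow$ (e) $\Rightarrow$ (a).
\begin{itemize}
\item For (a) $\Rightarrow$ (b), it takes an arbitrary flat $L$ and notes that $L^+$ is injective. The monic $\mathsf{BF}_n$-preenvelope of $L^+$ therefore splits, so $L^+ \in \mathsf{BF}_n$ and $L \in \mathsf{BA}_n$.
\item For (e) $\Rightarrow$ (a), it dualizes an epic $\mathsf{BA}_n$-cover $A \to M^+$ to get $M \hookrightarrow M^{++} \hookrightarrow A^+ \in \mathsf{BF}_n$.
\end{itemize}
You instead close the loop with (b) $\Rightarrow$ (a), using a free module mapping onto $E'^+$ in place of the epic cover. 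You then attach (d) and (e) to (c), with (e) $\Rightarrow$ (c) obtained by splitting an epic cover of $R$. Your key step is really the paper's (e) $\Rightarrow$ (a) with free modules substituted for covers. Since it only needs ${}_RR \in \mathsf{BA}_n$ and the duality pair, it gives (c) $\Rightarrow$ (a) directly, without covers. You could even skip $E'$: embed $N \hookrightarrow N^{++} \hookrightarrow F_0^+$, where $F_0 \to N^+$ is an epimorphism from a free $R$-module. The paper's cycle is shorter because (b) $\Rightarrow$ (c) and (d) $\Rightarrow$ (e) are immediate.

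\textbf{Two points to fix.}
\begin{itemize}
\item In (b) $\Rightarrow$ (a), the step $F_0 \in \mathsf{BA}_n \Rightarrow F_0^+ \in \mathsf{BF}_n$ does not follow from Proposition \ref{prop:basic_properties_cyclic}(1). That result only gives $N \in \mathsf{BF}_n \iff N^+ \in \mathsf{BA}_n$ for $R^{\rm op}$-modules $N$. You need the duality pair $(\mathsf{BA}_n,\mathsf{BF}_n)$ from Theorem \ref{theo2}, which is available because (b) includes weak Lee $n$-coherence.
\item In (a) $\Rightarrow$ (c), the module to dualize is $({}_RR)^+$, not $(R_R)^+$. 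Also, the step you flag as the main obstacle is unnecessary. $({}_RR)^+$ is itself injective, so its own monic $\mathsf{BF}_n$-preenvelope splits and gives $({}_RR)^+ \in \mathsf{BF}_n$ directly. No cogenerator or closure under products is needed; this is exactly what the paper does with $L^+$.
\end{itemize}
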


\begin{proof}
We can follow the arguments given in \cite[Prop. 3.5]{YangLiu11}:
\begin{itemize}
\item (a) $\Rightarrow$ (b): Assertion (a) implies that $\mathsf{BF}_n$ is preenveloping, and so $R$ is weakly Lee $n$-coherent and $(\mathsf{BA}_n,\mathsf{BF}_n)$ is a duality pair by Theorem \ref{theo2}. Now let $L$ be a flat $R$-module. Then, $L^+$ is injective by \cite[Thm. 3.2.10]{EJ00}. If we consider a monic $\mathsf{BF}_n$-preenvelope $L^+ \to N$ (with $N \in \mathsf{BF}_n$), it follows that $L^+$ is a direct summand of $N$, and so $L^+ \in \mathsf{BF}_n$. Now since $(\mathsf{BA}_n,\mathsf{BF}_n)$ is a duality pair, we get that $L \in \mathsf{BA}_n$. 

\item (b) $\Rightarrow$ (c): Immediate.

\item (c) $\Rightarrow$ (d): Since $R$ is weakly Lee $n$-coherent and ${}_R R \in \mathsf{BA}_n$, we have by Theorem \ref{theo2} and Proposition \ref{prop:basic_properties_cyclic} that $(\mathsf{BA}_n,\mathsf{BF}_n)$ is a perfect duality pair, and hence $(\mathsf{BA}_n,[\mathsf{BA}_n]^{\perp_1})$ is a perfect cotorsion pair. 

\item (d) $\Rightarrow$ (e): Immediate. 

\item (e) $\Rightarrow$ (a): Since $R$ is weakly Lee $n$-coherent, we have that $\mathsf{BF}_n$ is preenveloping and $(\mathsf{BA}_n,\mathsf{BF}_n)$ is a duality pair by Theorem \ref{theo2}. Let $M \in \modR$ and consider a $\mathsf{BF}_n$-preenvelope $\psi \colon M \to N$ (with $N \in \mathsf{BF}_n$). If $A \to M^+$ is an epic $\mathsf{BA}_n$-cover, we have a monomorphism $M^{++} \to A^+$ where $A^+ \in \mathsf{BF}_n$. After composing with the pure embedding $M \to M^{++}$, we get a monomorphism $f \colon M \to A^+$. Note that there is $g \colon N \to A^+$ such that $f = g \, \psi$, and since $f$ is monic, we have that $\psi$ is monic. 
\end{itemize}
\end{proof}

The following result extends \cite[Prop. 3.6 ]{YangLiu11}, where the authors study when modules have epic $n$-flat preenvelopes. \\

\begin{proposition}\label{prop:injectives_closed_under_quotients} 
The following assertions are equivalent:
\begin{enumerate}[(a)]
\item ${\rm pd}(F) \leq 1$ for every $F \in \mathsf{FP}^{\leq n}$.

\item $\mathsf{A}_n$ is closed under quotients.

\item For every $M \in \Rmod$ there is an exact sequence $$0 \to M \to A^0 \to A^1 \to 0$$ with $A^0, A^1 \in \mathsf{A}_n$.

\item $\mathsf{A}_n = \mathsf{A}_1$.

\item $\mathsf{A}_n$ is a $1$-tilting class.

\item Every $R$-module has a monic $n$-absolutely pure cover. 

\item $R$ is Lee $n$-coherent and ${\rm fd}(F) \leq 1$ for every $F \in \mathsf{FP}^{\leq n}$.

\item $R$ is Lee $n$-coherent and for every $N \in \modR$ there is an exact sequence $$0 \to F_1 \to F_0 \to N \to 0$$ with $F_0, F_1 \in \mathsf{F}_n$.

\item $R$ is Lee $n$-coherent and $\mathsf{F}_n$ is closed under submodules.

\item $R$ is Lee $n$-coherent and $\mathsf{F}_n = \mathsf{F}_1$.

\item $R$ is Lee $n$-coherent and $\mathsf{F}_n$ is a $1$-cotilting class.

\item Every $R^{\rm op}$-module has an epic $n$-flat preenvelope. 
\end{enumerate}
\end{proposition}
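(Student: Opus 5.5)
The plan is to organize the twelve assertions around the ``absolutely pure'' side (a)--(f) and the ``flat'' side (g)--(l), and link the two sides through Lee $n$-coherency and the duality pair $(\mathsf{F}_n,\mathsf{A}_n)$.

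\emph{The $\mathsf{A}_n$-side.} For (a) $\Rightarrow$ (d), note that (a) gives $\mathsf{FP}^{\leq n}=\mathsf{FP}^{\leq 1}$, so $\mathsf{A}_n=\mathsf{A}_1$. For (d) $\Rightarrow$ (a), observe that $\mathsf{FP}^{\leq n}\subseteq {}^{\perp_1}[\mathsf{A}_n] = {}^{\perp_1}[\mathsf{A}_1]$, and the latter is contained in $\mathsf{P}^{\leq 1}$ by Proposition \ref{prop:basic_properties} (6) with $n=1$. For (a) $\Rightarrow$ (b), take $0\to K\to M\to L\to 0$ with $M\in\mathsf{A}_n$ and use the exact sequence $\Ext^1_R(F,M)\to\Ext^1_R(F,L)\to\Ext^2_R(F,K)=0$. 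For (b) $\Rightarrow$ (c), take an injective envelope $0\to M\to E(M)\to E(M)/M\to 0$.

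The step (c) $\Rightarrow$ (a) is the one I expect to be the main obstacle, because dimension shifting along $0\to M\to A^0\to A^1\to 0$ only gives the exact sequence $0=\Ext^1_R(F,A^1)\to\Ext^2_R(F,M)\to\Ext^2_R(F,A^0)$. So I must first show that $\Ext^{\geq 2}_R(F,-)$ vanishes on $\mathsf{A}_n$. I would try to get this by showing that (c) forces $\mathsf{A}_n$ to be coresolving, that is, closed under monocokernels. Then Theorem \ref{theo1} ((j) $\Rightarrow$ (a)) makes $R$ Lee $n$-coherent, and the proof of (i) $\Rightarrow$ (j) gives $\Ext^2_R(F,\mathsf{A}_n)=0$. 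Iterating with syzygies in $\mathsf{FP}^{\leq n}$ then gives $\Ext^{\geq 2}_R(F,\mathsf{A}_n)=0$, hence $\Ext^2_R(F,M)=0$ for every $M$. If this route fails, I would instead use a maximal-index argument. Let $k\leq n$ be maximal with $\Ext^k_R(F,-)\neq 0$. Then $\Ext^k_R(F,-)$ is right exact, and I would apply it to the sequences supplied by (c) to derive a contradiction when $k\geq 2$.

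The remaining assertions on this side go as follows. (d) $\Leftrightarrow$ (e) follows from \cite[Thm. 5.1.14]{GT} together with Proposition \ref{prop:basic_properties} applied with $n=1$; note that $\mathsf{A}_1$ is always coresolving, because $\mathsf{FP}^{\leq 1}$ is closed under taking syzygies up to projectives. For (b) $\Rightarrow$ (f), (b) makes $\mathsf{A}_n$ coresolving, so $R$ is Lee $n$-coherent, so $\mathsf{A}_n$ is covering by Theorem \ref{theo1} (n). An $\mathsf{A}_n$-cover of $M$ factors through an injective envelope of $M$, and closure under quotients lets one replace the cover by its image, which is monic. For (f) $\Rightarrow$ (b), suppose $E\to E/K$ is a quotient of some $E\in\mathsf{A}_n$, and let $A\to E/K$ be a monic cover. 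The map $E\to E/K$ factors through $A$, so $A\to E/K$ is also epic, hence $E/K\cong A\in\mathsf{A}_n$.

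\emph{The $\mathsf{F}_n$-side and the bridge.} From now on assume $R$ is Lee $n$-coherent, so that $(\mathsf{F}_n,\mathsf{A}_n)$ is a complete duality pair and $\Tor^R_k(N,F)^+\cong\Ext^k_R(F,N^+)$ for $F\in\mathsf{FP}^{\leq n}$, by \cite[Lem. 1.2.11]{GT}. Then (g) $\Leftrightarrow$ (a): for $F$ finitely presented with $\pd(F)\leq n$, $\fd(F)\leq 1$ means $\Tor_2^R(-,F)=0$, which by the isomorphism above is $\Ext^2_R(F,-)=0$ on all character modules $N^+$. Passing to $\Ext^2_R(F,M)$ for arbitrary $M$ uses the pure embedding $M\to M^{++}$ and the fact that $\Ext^2_R(F,-)$ commutes with direct limits for $F\in\mathsf{FP}_2$. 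The statements (h), (i), (j), (k) are the Tor/character duals of (c), (b), (d), (e). For (i) $\Leftrightarrow$ (a) I would use $\Tor^R_1(N',F)\hookleftarrow\Tor^R_2(N/N',F)$, and for (k) I would use \cite[Thm. 8.1.9]{GT} exactly as in (f) $\Rightarrow$ (m) of Theorem \ref{theo1}. Finally, (l) is dual to (f): given (a) or (i), $\mathsf{F}_n$ is preenveloping and closed under submodules, so the image of an $\mathsf{F}_n$-preenvelope is an epic preenvelope. Conversely, (l) implies (h) from Theorem \ref{theo1}, hence coherency, and closure under submodules (every submodule $N'\subseteq N\in\mathsf{F}_n$ is an epic image of its preenvelope, which is factored through by the inclusion), which gives (i).
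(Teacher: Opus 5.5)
\paragraph{The gap: (c) $\Rightarrow$ (a) is never proved.} The problem is where you suspected it, and neither of your two routes can be completed by the formal means you indicate. From $0\to M\to A^0\to A^1\to 0$ you only get $\Ext^2_R(F,M)\hookrightarrow\Ext^2_R(F,A^0)$. What is missing is $\Ext^2_R(F,\mathsf{A}_n)=0$, i.e.\ closure of $\mathsf{A}_n$ under monocokernels.

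Your first route is not a reduction. Given (c), closure under monocokernels already implies closure under quotients: push out $0\to K\to A\to C\to 0$ along $K\to A^0$ to get $0\to A^0\to X\to C\to 0$ with $X\in\mathsf{A}_n$. So that route is equivalent to what you want to prove, and you give no mechanism for it.

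Your second route only yields the exact sequence $\Ext^{k-1}_R(F,A^1)\to\Ext^k_R(F,M)\to\Ext^k_R(F,A^0)\to\Ext^k_R(F,A^1)\to 0$. This contradicts nothing unless $\Ext^k_R(F,\mathsf{A}_n)=0$ is already known.

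In fact, dimension shifting together with the general properties that $\mathsf{A}_n$ shares with any class $\mathcal{S}^{\perp_1}$ ($\mathcal{S}$ a set of finitely presented modules of bounded projective dimension) cannot suffice. Take $\Lambda=k(1\xrightarrow{a}2\xrightarrow{b}3)/(ba)$ and $\mathcal{S}=\{S_1\}$.
\begin{itemize}
\item $\pd(S_1)=2$ and $\Ext^1_\Lambda(S_1,M)\cong\ker M_b/{\rm Im}\,M_a$.
\item Hence $\mathcal{S}^{\perp_1}$ contains every indecomposable except $S_2$.
\item $\Lambda$ is representation-finite, so the sequence $0\to S_2\to P_1\to S_1\to 0$ (with $P_1,S_1\in\mathcal{S}^{\perp_1}$), summed appropriately, gives every $\Lambda$-module a coresolution $0\to M\to A^0\to A^1\to 0$ in $\mathcal{S}^{\perp_1}$.
\item Yet $\pd(S_1)=2$.
\end{itemize}
So any proof must use something specific to $\mathsf{FP}^{\leq n}$, in practice extracting Lee $n$-coherency from (c). Once $R$ is Lee $n$-coherent, the syzygy $\Omega F$ lies in $\mathsf{FP}^{\leq n}$, and $\Ext^2_R(F,A^0)\cong\Ext^1_R(\Omega F,A^0)=0$ finishes the argument. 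This is exactly why (h), the flat analogue of (c), carries the hypothesis that $R$ is Lee $n$-coherent, and why your dimension shifting on the flat side does go through.

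\paragraph{Comparison with the paper and minor points.} The paper's own proof does not close this gap either. It handles (a) $\Leftrightarrow$ (b) $\Leftrightarrow$ (c) by ``dimension shifting'' and the containment of injectives in $\mathsf{A}_n$. That proves (a) $\Rightarrow$ (b) $\Rightarrow$ (c) and (b) $\Rightarrow$ (a), but tacitly uses $\Ext^2_R(F,A^0)=0$ for (c) $\Rightarrow$ (a). So (c) needs either an added coherency hypothesis or a genuinely new argument.

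Apart from this, your proposal is sound and mostly tracks the paper. Your (d) $\Rightarrow$ (a) via Proposition \ref{prop:basic_properties} (6) with $n=1$ is a clean alternative to the paper's (d) $\Rightarrow$ (b). There are two small slips:
\begin{itemize}
\item $\Ext^2_R(F,-)$ commuting with direct limits needs $F\in\mathsf{FP}_3$, not $\mathsf{FP}_2$. This is harmless here, since under Lee $n$-coherency $F$ has a finite resolution by finitely generated projectives. The paper's ``$\Omega F$ is finitely presented and flat, hence projective'' is shorter anyway.
\item The remark that an $\mathsf{A}_n$-cover ``factors through an injective envelope'' does not make sense as stated and is not needed.
\end{itemize}
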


\mbox{}

\begin{proof} ~\
\begin{itemize}
\item (a) $\Leftrightarrow$ (b) $\Leftrightarrow$ (c): Follows by dimension shifting and the fact that $\mathsf{A}_n$ contains the class of injective $R$-modules. 

\item (a) $\Rightarrow$ (d): Immediate. 

\item (d) $\Rightarrow$ (b): Follows by noticing that $\mathsf{A}_1$ is always closed under quotients, since it is the right Ext-orthogonal complement of $\mathsf{FP}^{\leq 1}$ (see \cite[Prop. 2.11]{BOPP22}). 

\item (b) $\Leftrightarrow$ (e): One can note from the definition that 1-tilting classes are always closed under quotients. On the other hand, if $\mathsf{A}_n$ is closed under quotients, it follows by \cite[Thm. 6.1.4]{GT} and Proposition \ref{prop:basic_properties} that $\mathsf{A}_n$ is $1$-tilting. 

\item (b) $\Rightarrow$ (f): If $\mathsf{A}_n$ is closed under quotients, it is in particular closed under monocokernels, and so $R$ is Lee $n$-coherent. It follows by Theorem \ref{theo1} that $\mathsf{A}_n$ is covering. Now let $M \in \Rmod$ and $\varphi \colon A \to M$ be an $n$-absolutely pure cover of $M$. Since $\mathsf{A}_n$ is closed under quotients, we have that ${\rm Im}(\varphi) \in \mathsf{A}_n$. On the other hand, if we consider the restriction $\hat{\varphi} \colon A \to {\rm Im}(\varphi)$ and the inclusion $\iota \colon {\rm Im}(\varphi) \to M$ (so that $\varphi = \iota \, \hat{\varphi}$), one can note that $\iota$ is an $n$-absolutely pure precover of $M$, and so an $n$-absolutely pure cover by \cite[Ex. 2 of \S 5.2]{EJ00}. 

\item (f) $\Rightarrow$ (b): Consider an epimorphism $g \colon A \to M$ in $\Rmod$ where $A \in \mathsf{A}_n$. By (f), consider also a monic $n$-absolutely pure cover $\varphi \colon A' \to M$ (with $A' \in \mathsf{A}_n$). There exists a morphism $f \colon A \to A'$ such that $\varphi \, f = g$. Since $g$ is epic, so is $\varphi$. Thus, $\varphi$ is an isomorphism, and hence $M$ is $n$-absolutely pure.  

\item (a) $\Rightarrow$ (g): Immediate. 

\item (g) $\Rightarrow$ (a): Let $0 \to K \to R^m \to F \to 0$ be an exact sequence where $F \in \mathsf{FP}^{\leq n}$ and $K$ is finitely generated with ${\rm pd}(K) \leq n-1$. Since $R$ is Lee $n$-coherent, we have that $K$ is finitely presented. On the other hand, since ${\rm fd}(F) \leq 1$, it follows that $K$ is also flat, and hence $K$ is projective by \cite[Prop. 3.2.12]{EJ00}.

\item (g) $\Leftrightarrow$ (h) $\Leftrightarrow$ (i): Follows by dimension shifting and the fact that $\mathsf{F}_n$ contains the class of flat $R^{\rm op}$-modules. 

\item (g) $\Rightarrow$ (j): Immediate.

\item (j) $\Rightarrow$ (i): Follows by noticing that $\mathsf{F}_1$ is always closed under submodules, since it is the left Tor-orthogonal complement of $\mathsf{FP}^{\leq 1}$. 

\item (i) $\Leftrightarrow$ (k): $\mathsf{F}_n$ is always covering, and if $R$ is Lee $n$-coherent, we also have that $\mathsf{F}_n$ is closed under direct products. From its definition, note also that $1$-cotilting classes are always closed under submodules. Hence, the equivalence between (i) and (k) follows by \cite[Thm. 8.2.3]{GT}. 

\item (i) $\Leftrightarrow$ (l): Similar to (b) $\Leftrightarrow$ (f).
\end{itemize}
\end{proof}

\begin{remark} ~\
\begin{enumerate}[(1)]
\item In \cite[Prop. 3.6]{YangLiu11}, the authors claim that assertions (b), (f), (i) and (l) are equivalent. Regarding \cite[proof of (4) $\Rightarrow$ (1) in Prop. 3.6]{YangLiu11}, one can find the same gap pointed out in Remark \ref{rmk0} on \cite[Lem. 2.1]{YangLiu11}, concerning pullback diagrams from two monomorphisms (concretely, the morphism $M \to E^+$ appearing in their pullback diagram is not necessarily epic). The proof given for Proposition \ref{prop:injectives_closed_under_quotients} fixes this gap. 

\item An analog of Proposition \ref{prop:injectives_closed_under_quotients} is also valid if we replace the classes $\mathsf{FP}^{\leq n}$, $\mathsf{A}_n$ and $\mathsf{F}_n$ by $\mathsf{CFP}^{\leq n}$, $\mathsf{BA}_n$ and $\mathsf{BF}_n$, and Lee $n$-coherent rings by weakly Lee $n$-coherent rings, respectively. 
\end{enumerate}
\end{remark}

%%%%%%%%%%%%%%%%%%%%%%%%%%%%%%%%%%%%%
%%%%%%%%%%%%%%%%%%%%%%%%%%%%%%%%%%%%%
%%%%%%%%%%%%%%%%%%%%%%%%%%%%%%%%%%%%%
%%%%%%%%%%%%%%%%%%%%%%%%%%%%%%%%%%%%%

\section{Induced $n$-absolutely pure and $n$-flat modules and chain complexes} \label{sec:examples}

The purpose of this section is to show how to obtain new finitely presented modules with bounded projective dimension, new $n$-absolutely pure and new $n$-flat modules or complexes from old ones. 

%%%%%%%%%%%%%%%%%%%%%%%%%%%%%%%%%%%%%
%%%%%%%%%%%%%%%%%%%%%%%%%%%%%%%%%%%%%

\subsection*{Transfer properties}

Let us consider a ring homomorphism $\varphi \colon R \to S$, where $R$ and $S$ are commutative rings. In this subsection we study how to transfer modules from the classes $\mathsf{FP}^{\leq n}$, $\mathsf{A}_n$ and $\mathsf{F}_n$ in $\Rmod$ to their corresponding classes in ${\rm Mod}(S)$, and viceversa. To avoid confusion, we will use the notations $\mathsf{FP}^{\leq n}_R$, $\mathsf{A}^R_n$ and $\mathsf{F}^R_n$ whenever we consider these classes in $\Rmod$, or $\mathsf{FP}^{\leq n}_S$, $\mathsf{A}^S_n$ and $\mathsf{F}^S_n$ when considered in ${\rm Mod}(S)$.

From $\varphi \colon R \to S$, there is an associated adjoint triple $(\varphi^\ast,\varphi_\ast,\varphi^!)$ of additive functors, where:
\begin{itemize}
\item $\varphi^\ast \colon \Rmod \to {\rm Mod}(S)$ is the \emph{extension of scalars} functor given by
\[
\varphi^\ast(M) := S \otimes_R M,
\]
for every $M \in \Rmod$;

\item $\varphi_\ast \colon {\rm Mod}(S) \to \Rmod$ is the \emph{restriction of scalars} functor, where for every $N \in {\rm Mod}(S)$, $\varphi_\ast(N)$ is the abelian group $N$ with the $R$-module structure given by 
\[
r \cdot x := \varphi(r) \cdot x
\]
for every $r \in R$ and $x \in N$; and 

\item $\varphi^! \colon \Rmod \to {\rm Mod}(S)$ is the \emph{coextension of scalars} functor given by
\[
\varphi^!(M) := \Hom_R(S,M),
\]
for every $M \in \Rmod$.
\end{itemize}
Note that $\varphi^\ast$ (resp., $\varphi^!$) is right (resp., left) exact since it is a left (resp., right) adjoint. Then, $\varphi_\ast$ is exact. Moreover, $\varphi_\ast$ is always faithful, and it is also full if, and only if, $\varphi$ is a ring epimorphism (see for instance \cite[Ex. 4.5.14 (iv)]{Riehl16}).

\begin{proposition}\label{prop:transfer1}
The following assertions hold for any ring homomorphism $\varphi \colon R \to S$:
\begin{enumerate}[(1)]
\item If $\varphi$ makes $S$ a faithfully flat $R$-module, then $\varphi^\ast(F) \in \mathsf{FP}^{\leq n}_S$ for every $F \in \mathsf{FP}^{\leq n}_R$.

\item If $\varphi$ makes $S$ a finitely generated projective $R$-module, then $\varphi_\ast(F) \in \mathsf{FP}^{\leq n}_R$ for every $F \in \mathsf{FP}^{\leq n}_S$. 

\item If $\varphi$ is an epimorphism that makes $S$ a (nonzero) finitely generated free $R$-module, then $F \in \mathsf{FP}^{\leq n}_S$ whenever $\varphi_\ast(F) \in \mathsf{FP}^{\leq n}_R$. \\
\end{enumerate}
\end{proposition}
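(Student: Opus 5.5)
Each part is handled by tracking a finite presentation and a finite projective resolution through the relevant functor. Throughout, $R$ and $S$ are commutative and $F$ denotes the module under consideration.

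For part (1), take $F \in \mathsf{FP}^{\leq n}_R$. It has a finite presentation $R^m \to R^k \to F \to 0$ and a projective resolution $0 \to P_n \to \cdots \to P_0 \to F \to 0$. Since $\varphi^\ast$ is right exact, $S^m \to S^k \to S\otimes_R F \to 0$ is exact, so $\varphi^\ast(F)$ is finitely presented over $S$. Flatness of $S$ over $R$ makes $S \otimes_R -$ exact. Hence $0 \to S\otimes_R P_n \to \cdots \to S \otimes_R P_0 \to S\otimes_R F \to 0$ is an $S$-projective resolution of length $\leq n$, because extension of scalars preserves projectives as it is left adjoint to the exact functor $\varphi_\ast$. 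Only flatness is used here; faithfulness is not needed.

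For part (2), take $F \in \mathsf{FP}^{\leq n}_S$ with an $S$-presentation $S^m \to S^k \to F \to 0$. The functor $\varphi_\ast$ is exact, and $\varphi_\ast(S)$ is finitely generated projective over $R$. Since $R$ is commutative, finitely generated projective modules are finitely presented, and an extension of finitely presented modules is finitely presented. So $F$ is a cokernel of a map between finitely presented $R$-modules and is therefore finitely presented over $R$. For the dimension, $\varphi_\ast$ sends $S$-projectives (summands of free $S$-modules) to summands of direct sums of copies of $\varphi_\ast(S)$, which are $R$-projective. Applying $\varphi_\ast$ to an $S$-projective resolution of length $\leq n$ thus gives an $R$-projective resolution of length $\leq n$.

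Part (3) is the main obstacle. Here $S \cong R^d$ with $d \geq 1$, and $\varphi$ is a ring epimorphism, so $\varphi_\ast$ is fully faithful and $S \otimes_R S \cong S$.

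For finite presentation, I would prefer to argue directly rather than through $\varphi^\ast\varphi_\ast(F) \cong F$. Since $S \otimes_R S \cong S$ via the multiplication map, for any $S$-module $N$ we get $S \otimes_R N \cong S\otimes_R S \otimes_S N \cong N$. Thus $F \cong \varphi^\ast\varphi_\ast(F)$, and $\varphi^\ast$ preserves finite presentation by right exactness.

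For projective dimension, $\varphi^\ast$ is exact because $S$ is $R$-free. Applying it to an $R$-projective resolution of $\varphi_\ast(F)$ of length $\leq n$ gives an $S$-projective resolution of $\varphi^\ast\varphi_\ast(F) \cong F$ of length $\leq n$.

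I should double check that the counit $S\otimes_R N \to N$ is an isomorphism; this is the standard characterization of ring epimorphisms. In fact a free ring epimorphism with $d\geq1$ forces $d=1$ and $\varphi$ to be an isomorphism, which would make part (3) trivial, but the argument above does not need this.
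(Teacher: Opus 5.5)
Your proof is correct and follows essentially the same route as the paper. In (1) you base-change a finite presentation and a length-$n$ projective resolution along the exact functor $\varphi^\ast$, with projectivity preserved because $\varphi^\ast$ is left adjoint to the exact functor $\varphi_\ast$; in (2) you restrict a presentation and a resolution along $\varphi_\ast$; and in (3) you use $\varphi^\ast\varphi_\ast \cong \mathrm{id}$, coming from $S \otimes_R S \cong S$, to reduce to the argument of (1). Your side remarks are also right: finite presentation in (1) needs only right exactness, where the paper cites Glaz instead, and the hypotheses of (3) do force $\varphi$ to be an isomorphism. Note, though, that despite saying you would avoid it, your argument for (3) does go through $F \cong \varphi^\ast\varphi_\ast(F)$, exactly as the paper's does.
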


\begin{proof} ~\
\begin{enumerate}[(1)]
\item By \cite[Thm. 2.1.9]{Glaz89}, $\varphi^\ast(F)$ is finitely presented in ${\rm Mod}(S)$. On the other hand, there is an exact sequence
\[
0 \to P_n \to P_{n-1} \to \cdots \to P_1 \to P_0 \to F \to 0
\]
in $\Rmod$ with $P_k$ a projective $R$-module for every $0 \leq k \leq n$. Since $S$ is flat as an $R$-module, we get an exact sequence 
\[
0 \to S \otimes_R P_n \to S \otimes_R P_{n-1} \to \cdots \to S \otimes_R P_1 \to S \otimes_R P_0 \to S \otimes_R F \to 0.
\]
Moreover, note that each $S \otimes_R P_k$ is a projective $S$-module since $\Hom_S(S \otimes_R P_k,\sim)$ is an exact functor, being naturally isomorphic to $\Hom_R(P_k,\varphi_\ast(\sim))$ (a composition of two exact functors). Hence, ${\rm pd}(S \otimes_R F) \leq n$. 

\item We first show that $\varphi_\ast(F)$ is finitely presented over $R$. Since $F$ is finitely presented over $S$, we have an exact sequence
\[
S^{m_1} \to S^{m_0} \to F \to 0,
\]
with $m_0$ and $m_1$ nonnegative integers. Since $\varphi_\ast$ is an additive exact functor, we get the exact sequence
\[
[\varphi_\ast (S)]^{m_1} \to [\varphi_\ast (S)]^{m_0} \to \varphi_\ast (F) \to 0
\]
in $\Rmod$, where $[\varphi_\ast (S)]^{m_0}$ and $[\varphi_\ast (S)]^{m_1}$ are finitely generated and projective, since so is $\varphi_\ast (S)$. Hence, $\varphi_\ast (F)$ is finitely presented over $R$. 

Now consider an exact sequence
\[
0 \to P_n \to P_{n-1} \to \cdots \to P_1 \to P_0 \to F \to 0
\]
in ${\rm Mod}(S)$ with $P_k$ projective for every $0 \leq k \leq n$. This yields the following exact sequence in $\Rmod$:
\[
0 \to \varphi_\ast (P_n) \to \varphi_\ast (P_{n-1}) \to \cdots \to \varphi_\ast (P_1) \to \varphi_\ast (P_0) \to \varphi_\ast (F) \to 0.
\]
We can note that each $\varphi_\ast (P_k)$ is a projective $R$-module. Indeed, the functor $\Hom_R(\varphi_\ast (P_k), \sim)$ is naturally isomorphic to $\Hom_S(P_k,\Hom_R(S,\sim))$, a composition of exact functors. Hence, ${\rm pd}(\varphi_\ast (F)) \leq n$. 

\item Now assume that $\varphi$ is a ring epimorphism and that $S$ is finitely generated and free as an $R$-module. Then, $\varphi_\ast$ is full and faithful, and hence by \cite[Prop. 3.4.1]{BorceuxI} the functors $\varphi^\ast \circ \varphi_\ast$ and ${\rm id}_{S\mbox{-}\textrm{Mod}}$ are naturally isomorphic. So if we let $F \in S\mbox{-}\textrm{Mod}$ such that $\varphi_\ast(F) \in \mathsf{FP}^{\leq n}_R$, then by (1) we get $F \simeq \varphi^\ast(\varphi_\ast(F)) \in \mathsf{FP}^{\leq n}_S$ (note that $S$ is faithfully flat over $R$, since $S$ is a nonzero free $R$-module). 
\end{enumerate}
\end{proof}

\begin{proposition}\label{prop:transfer2}
The following assertions hold for any ring homomorphism $\varphi \colon R \to S$:
\begin{enumerate}[(1)]
\item If $\varphi$ makes $S$ a faithfully flat $R$-module, then:
\begin{enumerate}[(a)]
\item $\varphi_\ast(N) \in \mathsf{F}^R_n$ for every $N \in \mathsf{F}^S_n$. 

\item $\varphi_\ast(M) \in \mathsf{A}^R_n$ for every $M \in \mathsf{A}^S_n$.
\end{enumerate}

\item If $\varphi$ makes $S$ a finitely generated and projective $R$-module, then: 
\begin{enumerate}[(a)]
\item $\varphi^\ast(N) \in \mathsf{F}^S_n$ for every $N \in \mathsf{F}^R_n$. 

\item $\varphi^!(M) \in \mathsf{A}^S_n$ for every $M \in \mathsf{A}^R_n$. 
\end{enumerate}

\item If $\varphi$ is a ring epimorphism that makes $S$ a finitely generated and projective $R$-module, then: 
\begin{enumerate}[(a)]
\item $N \in \mathsf{F}^S_n$ whenever $\varphi_\ast(N) \in \mathsf{F}^R_n$. 

\item $M \in \mathsf{A}^S_n$ whenever $\varphi_\ast(M) \in \mathsf{A}^R_n$. \\
\end{enumerate}
\end{enumerate}
\end{proposition}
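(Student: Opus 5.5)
The plan is to reduce every assertion to a change-of-rings isomorphism for $\Ext^1$ or $\Tor_1$, combined with Proposition \ref{prop:transfer1}, which moves the test modules in $\mathsf{FP}^{\leq n}$ between $R$ and $S$. Throughout, $R$ and $S$ are commutative, as fixed at the start of this subsection, so no left/right bookkeeping is needed.

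For (1), let $F \in \mathsf{FP}^{\leq n}_R$ and take a projective resolution $P_\bullet \to F$ in $\Rmod$. Since $S$ is flat over $R$, the complex $S \otimes_R P_\bullet$ is a projective resolution of $S \otimes_R F$ in ${\rm Mod}(S)$; projectivity of each term was already checked in the proof of Proposition \ref{prop:transfer1} (1). For $N \in {\rm Mod}(S)$ there are natural isomorphisms of complexes
\[
N \otimes_S (S \otimes_R P_\bullet) \cong \varphi_\ast(N) \otimes_R P_\bullet \quad \text{and} \quad \Hom_S(S \otimes_R P_\bullet, N) \cong \Hom_R(P_\bullet, \varphi_\ast(N)).
\]
Taking homology gives $\Tor^R_1(\varphi_\ast(N),F) \cong \Tor^S_1(N, S \otimes_R F)$ and $\Ext^1_R(F,\varphi_\ast(M)) \cong \Ext^1_S(S \otimes_R F, M)$. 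By Proposition \ref{prop:transfer1} (1), $S \otimes_R F \in \mathsf{FP}^{\leq n}_S$. Hence the right-hand sides vanish when $N \in \mathsf{F}^S_n$ and $M \in \mathsf{A}^S_n$, which proves (1a) and (1b).

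For (2), the roles are reversed. Let $F \in \mathsf{FP}^{\leq n}_S$ and take a projective resolution $Q_\bullet \to F$ in ${\rm Mod}(S)$. Because $S$ is finitely generated projective over $R$, each $\varphi_\ast(Q_k)$ is $R$-projective, as shown in the proof of Proposition \ref{prop:transfer1} (2). Since $\varphi_\ast$ is exact, $\varphi_\ast(Q_\bullet)$ is an $R$-projective resolution of $\varphi_\ast(F)$. The adjunction isomorphisms
\[
(S \otimes_R N) \otimes_S Q_\bullet \cong N \otimes_R \varphi_\ast(Q_\bullet) \quad \text{and} \quad \Hom_S(Q_\bullet, \Hom_R(S,M)) \cong \Hom_R(\varphi_\ast(Q_\bullet), M)
\]
then give $\Tor^S_1(\varphi^\ast(N),F) \cong \Tor^R_1(N,\varphi_\ast(F))$ and $\Ext^1_S(F,\varphi^!(M)) \cong \Ext^1_R(\varphi_\ast(F),M)$. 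By Proposition \ref{prop:transfer1} (2), $\varphi_\ast(F) \in \mathsf{FP}^{\leq n}_R$, so both groups vanish. Faithful flatness is not needed for this part.

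For (3), I would use that $\varphi$ is a ring epimorphism, so $\varphi_\ast$ is fully faithful. This gives natural isomorphisms $\varphi^\ast\varphi_\ast \cong {\rm id}$, as already used in Proposition \ref{prop:transfer1} (3), and also $\varphi^!\varphi_\ast \cong {\rm id}$. The second one holds because, for an $S$-module $M$,
\[
\Hom_R(S,\varphi_\ast(M)) = \Hom_R(\varphi_\ast(S),\varphi_\ast(M)) \cong \Hom_S(S,M) \cong M
\]
by fullness, and one checks that the induced $S$-structure is the original one. Then (3a) follows from $N \cong \varphi^\ast(\varphi_\ast(N)) \in \mathsf{F}^S_n$ by (2a). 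Likewise, (3b) follows from $M \cong \varphi^!(\varphi_\ast(M)) \in \mathsf{A}^S_n$ by (2b).

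The main obstacle is verifying that the change-of-rings isomorphisms are natural isomorphisms of complexes compatible with the differentials, so that they pass to homology, and that the relevant resolutions stay projective. For the counit $\varphi^!\varphi_\ast \cong {\rm id}$, the delicate point is checking that it is $S$-linear. I would do this directly from the identification $S \otimes_R S \cong S$ for ring epimorphisms.
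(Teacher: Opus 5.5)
Your proposal is correct and follows essentially the paper's route: you combine the change-of-rings isomorphisms for $\Tor_1$ and $\Ext^1$ with Proposition \ref{prop:transfer1}, deriving those isomorphisms from resolutions and adjunctions where the paper cites Rotman, and for (3) you use $\varphi^\ast\varphi_\ast \cong {\rm id}$ and $\varphi^!\varphi_\ast \cong {\rm id}$. The one small difference is in (3a): the paper computes $\Tor^S_1(N,F) \cong \Tor^R_1(\varphi_\ast(N),\varphi_\ast(F))$ via $F \cong \varphi^\ast\varphi_\ast(F)$, whereas you apply $\varphi^\ast\varphi_\ast \cong {\rm id}$ to $N$ and invoke (2a), and both arguments work.
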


\begin{proof} {} ~\
\begin{enumerate}[(1)]
\item Part (a) (resp., (b)) follows by \cite[Coroll. 10.72]{RotmanBook} (resp., \cite[Coroll. 10.74]{RotmanBook}) and Proposition \ref{prop:transfer1} (1). 

\item Part (a) (resp., (b)) follows by \cite[Coroll. 10.72]{RotmanBook} (resp., \cite[Coroll. 10.75]{RotmanBook}) and Proposition \ref{prop:transfer1} (2). 

\item For part (a), if $N \in {\rm Mod}(S)$ is such that $\varphi_\ast(N) \in \mathsf{F}^R_n$ and $F \in \mathsf{FP}^{\leq n}_S$, then 
\[
\Tor^S_1(N,F) \cong \Tor^S_1(N,\varphi^\ast(\varphi_\ast(F))) \cong  \Tor^R_1(\varphi_\ast(N),\varphi_\ast(F))
\] 
since $\varphi$ is epic, and also by \cite[Coroll. 10.72]{RotmanBook}. By Proposition \ref{prop:transfer1} (2), we have that $\varphi_\ast(F) \in \mathsf{FP}^{\leq n}_R$, and so $\Tor^R_1(\varphi_\ast(N),\varphi_\ast(F)) = 0$. Hence, $N \in \mathsf{F}^S_n$. 

Regarding part (b), note that since $(\varphi_\ast,\varphi^!)$ is an adjoint pair where $\varphi_\ast$ is full and faithful, we have by the dual of \cite[Prop. 3.4.1]{BorceuxI} (see also \cite[Ex. III 36. (a)]{AssemBook}) that $\varphi^! \circ \varphi_\ast$ and ${\rm id}_{{\rm Mod}(S)}$ are naturally isomorphic. Hence, since $\varphi_\ast(M) \in \mathsf{A}^R_n$, we obtain by part (2) (b) that $M \simeq \varphi^!(\varphi_\ast(M)) \in \mathsf{A}^S_n$. 
\end{enumerate}
\end{proof}

%%%%%%%%%%%%%%%%%%%%%%%%%%%%%%%%%%%%%
%%%%%%%%%%%%%%%%%%%%%%%%%%%%%%%%%%%%%

\subsection*{$\bm{n}$-Absolutely pure and $\bm{n}$-flat modules over triangular matrix rings}

We aim to describe, over a triangular matrix ring 
\[
T = \left( \begin{array}{cc} A & 0 \\ U & B \end{array} \right),
\] 
how are the $T$-modules in $\mathsf{FP}^{\leq n}_T$, $\mathsf{A}_n^T$ and $\mathsf{F}_n^T$ in terms of the $A$-modules and $B$-modules in the corresponding classes $\mathsf{FP}^{\leq n}_A$, $\mathsf{FP}^{\leq n}_B$, $\mathsf{A}_n^A$, $\mathsf{A}_n^B$, $\mathsf{F}_n^A$ and $\mathsf{F}_n^B$. Here, $A$ and $B$ are associative rings with identity, and $U$ is a $(B,A)$-bimodule. It will be helpful to recall that the category ${\rm Mod}(T)$ of $T$-modules is equivalent to the category whose objects are given by pairs $M = (M_1, M_2)_\varphi$ where $M_1 \in {\rm Mod}(A)$, $M_2 \in {\rm Mod}(B)$ and $\varphi = \varphi^M \colon U \otimes_A M_1 \to M_2$ is a $B$-homomorphism. A morphism $f \colon (M_1, M_2)_\varphi \to (N_1, N_2)_\psi$ is given by a pair $f = (f_1,f_2)$ where $f_1 \colon M_1 \to N_1$ is an $A$-homomorphism and $f_2 \colon M_2 \to N_2$ is a $B$-homomorphism, such that the following diagram commutes:
\[
\begin{tikzpicture}[description/.style={fill=white,inner sep=2pt}] 
\matrix (m) [matrix of math nodes, row sep=2em, column sep=4em, text height=1.25ex, text depth=0.25ex] 
{ 
U \otimes_A M_1 & U \otimes_A N_1 \\ M_2 & N_2 \\
}; 
\path[->] 
(m-1-1) edge node[above] {\footnotesize$U \otimes f_1$} (m-1-2) edge node[left] {\footnotesize$\varphi^M$} (m-2-1) (m-1-2) edge node[right] {\footnotesize$\psi^N$} (m-2-2) (m-2-1) edge node[below] {\footnotesize$f_2$} (m-2-2)
;
\end{tikzpicture} 
\]

\begin{proposition}
Let $M = (M_1,M_2)_{\varphi} \in {\rm Mod}(T)$. The following assertions hold:
\begin{enumerate}[(1)]
\item If $M \in \mathsf{FP}^{\leq n}_T$, then $M_1$ is a finitely presented $A$-module with ${\rm pd}(M_1) \leq n$ and $M_2 / {\rm Im}(\varphi)$ is a finitely presented $B$-module. If in addition $\varphi$ is monic, ${}_B U$ is projective and $U_A$ is flat, then ${\rm pd}(M_2 / {\rm Im}(\varphi)) \leq n+1$ (and so $M_2 / {\rm Im}(\varphi) \in \mathsf{FP}^{\leq n+1}_B$).

\item If $\varphi$ is monic, ${}_B U$ is projective, $U_A$ is flat, $M_1 \in \mathsf{FP}^{\leq n}_A$ and $M_2 / {\rm Im}(\varphi) \in \mathsf{FP}^{\leq n}_B$, then $M \in \mathsf{FP}^{\leq n+1}_T$. 

\item If $M_2 \in \mathsf{FP}^{\leq n}_B$, then $(0,M_2)_{0} \in \mathsf{FP}^{\leq n}_T$. 

\item If $M_1 \in \mathsf{FP}^{\leq n}_A$, then $(M_1, U \otimes_A M_1)_{{\rm id}} \in \mathsf{FP}^{\leq n+1}_T$, provided that ${}_B U$ is projective and $U_A$ is flat. \\
\end{enumerate}
\end{proposition}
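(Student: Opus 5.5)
We will work throughout with the standard description of projective $T$-modules and of the functors relating ${\rm Mod}(T)$ with ${\rm Mod}(A)$ and ${\rm Mod}(B)$. The projective $T$-modules are exactly the pairs $(P, (U \otimes_A P) \oplus Q)_{\iota}$ with $P$ projective over $A$, $Q$ projective over $B$, and $\iota$ the canonical inclusion. Two functors will be used repeatedly:
\begin{itemize}
\item $M \mapsto M_1$, which is exact and sends projectives to projectives (it is restriction along the idempotent $e_1$, and $e_1 T = A$ as a right module, so it corresponds to $\Hom_T(Te_1,-)$);
\item the right exact functor $M \mapsto {\rm Coker}(\varphi^M)$, which sends the projective above to $Q$.
\end{itemize}
A finitely generated free $T$-module is $(A^k, U^k \oplus B^k)$, where $U^k$ denotes $U \otimes_A A^k$.

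For part (1), we start from a finite presentation $T^{m_1} \to T^{m_0} \to M \to 0$. Applying the exact functor $(-)_1$ gives $A^{m_1} \to A^{m_0} \to M_1 \to 0$, so $M_1$ is finitely presented. Applying the right exact cokernel functor gives $B^{m_1} \to B^{m_0} \to M_2/{\rm Im}(\varphi) \to 0$, so $M_2/{\rm Im}(\varphi)$ is finitely presented. Next take a projective resolution $0 \to P^n \to \cdots \to P^0 \to M \to 0$ of length $n$. Applying $(-)_1$ yields ${\rm pd}(M_1) \leq n$.

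For the bound on $M_2/{\rm Im}(\varphi)$, we use the short exact sequence $0 \to U \otimes_A M_1 \to M_2 \to M_2/{\rm Im}(\varphi) \to 0$, which is exact because $\varphi$ is monic. The second components of the $P^k$ have the form $(U \otimes_A P^k_1) \oplus Q^k$, which is projective over $B$ since ${}_BU$ is projective. Hence ${\rm pd}_B(M_2) \leq n$. Flatness of $U_A$ keeps $U \otimes_A (\text{resolution of } M_1)$ exact, so ${\rm pd}_B(U \otimes_A M_1) \leq n$. The short exact sequence then gives ${\rm pd}(M_2/{\rm Im}(\varphi)) \leq n+1$.

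For part (2), we use the short exact sequence of $T$-modules
\[
0 \to (0, M_2/{\rm Im}(\varphi))_0 \to M \to (M_1, U\otimes_A M_1)_{\rm id} \to 0,
\]
whose first map is induced by $M_2 \to M_2/{\rm Im}(\varphi)$ read backwards. More precisely, the exact sequence is $0 \to (M_1, U \otimes M_1)_{\rm id} \to M \to (0, M_2/{\rm Im}\varphi)_0 \to 0$, and this is exact because $\varphi$ is monic. Finitely presented modules are closed under extensions, and ${\rm pd}$ of an extension is at most the maximum of the projective dimensions of the ends. So (2) follows from (3) and (4) applied to $M_1$ and $M_2/{\rm Im}(\varphi)$, which gives ${\rm pd}(M) \leq n+1$.

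For part (3), we apply the exact functor $Z \colon {\rm Mod}(B) \to {\rm Mod}(T)$, $N \mapsto (0,N)_0$. It preserves finite presentation, because $Z(B) = (0,B)$ is the projective $Te_2$, a finitely generated $T$-module. It sends projectives to projectives, since $Z(Q) = (0,Q)$ has the form above with $P=0$. Applying $Z$ to a finite projective resolution of $M_2$ therefore gives $(0,M_2)_0 \in \mathsf{FP}^{\leq n}_T$.

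For part (4), we apply the functor $L \colon N \mapsto (N, U \otimes_A N)_{\rm id}$, which is the left adjoint of $(-)_1$. Since $U_A$ is flat, $L$ is exact, and it is right exact in any case. It sends $A$ to $Te_1$, so it preserves finite presentations and projectives. This gives $L(M_1) \in \mathsf{FP}^{\leq n}_T$, which is in fact stronger than the claimed bound $n+1$; the weaker bound suffices for part (2).

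The main obstacle is bookkeeping: verifying that the two exact sequences are exact in ${\rm Mod}(T)$, and making sure the projective dimension bound in (1) uses the hypotheses on $U$ exactly where needed. The rest is routine.
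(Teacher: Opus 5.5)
Your proof is correct, but it is organized differently from the paper's.

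\textbf{What the paper does.} It argues componentwise from quoted results: Goodearl's description of finitely generated $T$-modules, the Haghany--Varadarajan description of projective $T$-modules, and Mao's bounds $\max\{{\rm pd}(F_1),{\rm pd}(F_2)\}\le{\rm pd}(F)\le\max\{{\rm pd}(F_1)+1,{\rm pd}(F_2)\}$. In (2) and (4) it gets ${\rm pd}\le n+1$ from Mao's upper bound, using ${}_BU$ projective and $U_A$ flat to control ${\rm pd}(U\otimes_A M_1)$. It checks finite presentation in (2)--(4) by hand: it takes a finitely generated projective cover and uses the snake lemma to see that the kernel is finitely generated.

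\textbf{What you do instead.} You use the functors $(-)_1$, ${\rm Coker}$, $Z$ and $L=Te_1\otimes_A-$, together with the sequence $0\to(M_1,U\otimes_A M_1)_{\rm id}\to M\to(0,M_2/{\rm Im}(\varphi))_0\to 0$. This reduces (2) to (3), (4) and closure of $\mathsf{FP}^{\leq n}_T$ under extensions.

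\textbf{What your route buys.} It is more self-contained, and it proves more. Since $L$ is exact when $U_A$ is flat and sends $A$ to the projective $Te_1$, you get $(M_1,U\otimes_A M_1)_{\rm id}\in\mathsf{FP}^{\leq n}_T$ in (4). Hence $M\in\mathsf{FP}^{\leq n}_T$ in (2), and neither part needs ${}_BU$ projective. Finite presentation in (4) needs no hypothesis on $U$ at all, since $L$ is always right exact. Part (1) is essentially the paper's argument. Your explicit derivation of ${\rm pd}_B(M_2)\le n$ from the shape $(U\otimes_A P^k_1)\oplus Q^k$ of the second components makes precise a step the paper leaves implicit.

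\textbf{Two cosmetic repairs.}
\begin{enumerate}
\item In (2), delete the first displayed sequence. There is in general no monomorphism $(0,M_2/{\rm Im}(\varphi))_0\to M$; your corrected sequence is the right one.
\item Give a line for the right exactness of ${\rm Coker}$. Note that ${\rm Hom}_T(M,(0,N)_0)\cong{\rm Hom}_B(M_2/{\rm Im}(\varphi),N)$. So ${\rm Coker}$ is left adjoint to the exact functor $Z$, which makes it right exact and projective-preserving, as you use in (1).
\end{enumerate}
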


\begin{proof} 
In what follows, it will be important to keep in mind the following results:
\begin{itemize}
\item \cite[Ex. 1C-(b)]{Goodearl}: $(F_1,F_2)_{\varphi^F}$ is finitely generated in ${\rm Mod}(T)$ if, and only if, $F_1$ is finitely generated in ${\rm Mod}(A)$ and $F_2 / {\rm Im}(\varphi^F)$ is finitely generated in ${\rm Mod}(B)$. 

\item \cite[Thm. 3.1]{HaghanyVaradarajan99}: $(F_1,F_2)_{\varphi^F}$ is projective in ${\rm Mod}(T)$ if, and only if, $F_1$ is projective in ${\rm Mod}(A)$, $F_2 / {\rm Im}(\varphi^F)$ is projective in ${\rm Mod}(B)$ and $\varphi^F$ is monic.

\item \cite[Coroll. 3.3]{Mao20}: For every $(F_1,F_2)_{\varphi^F}$ in ${\rm Mod}(T)$, one has that
\[
\max \{ \pd(F_1), \pd(F_2) \} \leq \pd((F_1,F_2)_{\varphi^F}) \leq \max\{ \pd(F_1) + 1, \pd(F_2) \}.
\]
\end{itemize}
\begin{enumerate}[(1)]
\item Let $(F^1_1,F^1_2)_{\varphi^1} \xrightarrow{(f^1_1,f^1_2)} (F^0_1,F^0_2)_{\varphi^0} \xrightarrow{(f^0_1,f^0_2)} (M_1,M_2) \to 0$ be an exact sequence in ${\rm Mod}(T)$ where $(F^i_1,F^i_2)_{\varphi^i}$ is finitely generated and projective for $i = 0,1$. Then each $F^i_1$ is a finitely generated and projective $A$-module, $F^i_2 / {\rm Im}(\varphi^i)$ is a finitely generated and projective $B$-module, and $\varphi^i$ is monic. In particular, the exact sequence $F^1_1 \to F^0_1 \to M_1 \to 0$ implies that $M_1$ is a finitely presented $A$-module. We also have the following commutative diagram with exact rows and columns:
\[
\begin{tikzpicture}[description/.style={fill=white,inner sep=2pt}] 
\matrix (m) [matrix of math nodes, row sep=2em, column sep=4em, text height=1.25ex, text depth=0.25ex] 
{ 
0 & 0 & {} & {} \\
U \otimes_A F^1_1 & U \otimes_A F^0_1 & U \otimes_A M_1 & 0 \\
F^1_2 & F^0_2 & M_2 & 0 \\ 
F^1_2 / {\rm Im}(\varphi^1) & F^0_2 / {\rm Im}(\varphi^0) & M_2 / {\rm Im}(\varphi) & 0 \\
0 & 0 & 0 & {} \\
}; 
\path[->] 
(m-1-1) edge (m-2-1) (m-1-2) edge (m-2-2)
(m-2-1) edge node[above] {\footnotesize$U \otimes f^1_1$} (m-2-2) edge node[left] {\footnotesize$\varphi^1$} (m-3-1) (m-2-2) edge node[above] {\footnotesize$U \otimes f^0_1$} (m-2-3) edge node[left] {\footnotesize$\varphi^0$} (m-3-2) (m-2-3) edge (m-2-4) edge node[left] {\footnotesize$\varphi$} (m-3-3)
(m-3-1) edge node[above] {\footnotesize$f^1_2$} (m-3-2) edge (m-4-1) (m-3-2) edge node[above] {\footnotesize$f^0_2$} (m-3-3) edge (m-4-2) (m-3-3) edge (m-3-4) edge (m-4-3)
(m-4-1) edge (m-4-2) edge (m-5-1) (m-4-2) edge (m-4-3) edge (m-5-2) (m-4-3) edge (m-4-4) edge (m-5-3)
;
\end{tikzpicture} 
\]
This implies that $M_2 / {\rm Im}(\varphi)$ is finitely presented. On the other hand, there is an exact sequence
\[
0 \to (P^n_1, P^n_2)_{\varphi^n} \to \cdots \to (P^1_1,P^1_2)_{\varphi^1} \to (P^0_1,P^0_2)_{\varphi^0} \to (M_1,M_2) \to 0
\]
in ${\rm Mod}(T)$ with $(P^i_1,P^i_2)$ projective for every $0 \leq i \leq n$. Then, we have the exact sequence
\[
0 \to P^n_1 \to \cdots \to P^1_1\to P^0_1 \to M_1 \to 0
\]
where each $P^i_1$ is a projective $A$-module, and so, ${\rm pd}(M_1) \leq n$. Hence, $M_1 \in \mathsf{FP}^{\leq n}_A$. 

Now suppose that $\varphi$ is monic, ${}_B U$ is projective and $U_A$ is flat. From the previous diagram and the assumptions, we have that an exact sequence
\[
0 \to U \otimes_A M_1 \xrightarrow{\varphi} M_2 \to M_2 / {\rm Im}(\varphi) \to 0,
\]
where ${\rm pd}(M_2) \leq n$. Also, we have the following exact sequence in ${\rm Mod}(B)$:
\[
0 \to U \otimes_A P^n_1 \to \cdots \to U \otimes_A P^1_1 \to U \otimes_A P^0_1 \to U \otimes_A M_1 \to 0
\]
We claim that each $U \otimes_A P^i_1$ is a projective $B$-module. Indeed, we have a section $P^i_1 \to A^{(J_i)}$ where $A^{(J_i)}$ is a free $A$-module, which yields a section $U \otimes_A P^i_1 \to U \otimes_A A^{(J_i)}$. On the other hand, $U \otimes_A A^{(J_i)} \simeq U^{(J_i)}$ where ${}_B U$ is projective, and so $U \otimes_A P^i_1$ is projective in ${\rm Mod}(B)$. It follows that $\pd(U \otimes_A M_1) \leq n$. Therefore, $\pd(M_2 / {\rm Im}(\varphi)) \leq n + 1$.

\item We first show that $\pd(M) \leq n+1$. Note that it suffices to show that $\pd(M_2) \leq n$, but this follows from the short exact sequence 
\[
0 \to U \otimes_A M_1 \xrightarrow{\varphi} M_2 \to M_2 / {\rm Im}(\varphi) \to 0.
\]
Indeed, as in the previous part, we can note that $\pd(U \otimes_A M_1) \leq n$, and so $\pd(M_2) \leq n$ since $\pd(M_2 / {\rm Im}(\varphi)) \leq n$. 

Now let us show that $M$ is finitely presented. Note that $M$ is finitely generated, and so there is a short exact sequence
\[
0 \to (K_1,K_2)_{\varphi^K} \to (F_1,F_2)_{\varphi^F} \to (M_1,M_2)_{\varphi^M} \to 0
\]
where $(F_1,F_2)_{\varphi^F}$ is finitely generated and projective. On the one hand, we obtain a short exact sequence
\[
0 \to K_1 \to F_1 \to M_1 \to 0
\]
where $F_1$ is finitely generated and projective. Since $M_1$ is finitely presented, we have that $K_1$ is finitely generated. On the other hand, since $U_A$ is flat, we have the following commutative diagram with exact rows:
\[
\begin{tikzpicture}[description/.style={fill=white,inner sep=2pt}] 
\matrix (m) [matrix of math nodes, row sep=2em, column sep=2em, text height=1.25ex, text depth=0.25ex] 
{ 
0 & U \otimes_A K_1 & U \otimes_A F_1 & U \otimes_A M_1 & 0 \\
0 & K_2 & F_2 & M_2 & 0 \\ 
}; 
\path[->] 
(m-1-1) edge (m-1-2) (m-1-2) edge (m-1-3) edge node[right] {\footnotesize$\varphi^K$} (m-2-2) (m-1-3) edge (m-1-4) edge node[right] {\footnotesize$\varphi^F$} (m-2-3) (m-1-4) edge (m-1-5) edge node[right] {\footnotesize$\varphi^M$} (m-2-4)
(m-2-1) edge (m-2-2) (m-2-2) edge (m-2-3) (m-2-3) edge (m-2-4) (m-2-4) edge (m-2-5)
;
\end{tikzpicture} 
\]
Since $\varphi^F$ and $\varphi^M$ are monic, the snake lemma implies that $\varphi^K$ is monic and that there is a short exact sequence of the form
\[
0 \to K_2 / {\rm Im}(\varphi^K) \to F_2 / {\rm Im}(\varphi^F) \to M_2 / {\rm Im}(\varphi^M) \to 0,
\]
where $M_2 / {\rm Im}(\varphi^M)$ is finitely presented and $F_2 / {\rm Im}(\varphi^F)$ is finitely generated and projective. It follows that $K_2 / {\rm Im}(\varphi^K)$ is finitely generated, and hence $(K_1,K_2)_{\varphi^K}$ is finitely generated. Therefore, $(M_1,M_2)_{\varphi^M}$ is finitely presented. 

\item First, note that since $n$ is a positive integer, we have that 
\[
\pd((0,M_2)_0) \leq \max\{ 1, \pd(M_2) \} \leq n.
\]
Note also that $(0,M_2)_0$ is finitely generated, and so there is an exact sequence
\[
0 \to (K_1,K_2)_{\varphi^K} \to (F_1,F_2)_{\varphi^F} \to (0,M_2)_{0} \to 0
\]
where $(F_1,F_2)_{\varphi^F}$ is finitely generated and projective, and $K_1 \simeq F_1$. Thus, we have the following commutative diagram with exact rows:
\[
\begin{tikzpicture}[description/.style={fill=white,inner sep=2pt}] 
\matrix (m) [matrix of math nodes, row sep=2em, column sep=2em, text height=1.25ex, text depth=0.25ex] 
{ 
0 & U \otimes_A K_1 & U \otimes_A F_1 & 0 & 0 \\
0 & K_2 & F_2 & M_2 & 0 \\ 
}; 
\path[->] 
(m-1-1) edge (m-1-2) (m-1-2) edge node[above] {\footnotesize$\simeq$} (m-1-3) edge node[right] {\footnotesize$\varphi^K$} (m-2-2) (m-1-3) edge (m-1-4) edge node[right] {\footnotesize$\varphi^F$} (m-2-3) (m-1-4) edge (m-1-5) edge (m-2-4)
(m-2-1) edge (m-2-2) (m-2-2) edge (m-2-3) (m-2-3) edge (m-2-4) (m-2-4) edge (m-2-5)
;
\end{tikzpicture} 
\]
Since $\varphi^F$ is monic, so is $\varphi^K$, and so we get the exact sequence
\[
0 \to K_2 / {\rm Im}(\varphi^K) \to F_2 / {\rm Im}(\varphi^F) \to M_2 \to 0
\]
where $F_2 / {\rm Im}(\varphi^F)$ is finitely generated and projective and $M_2$ is finitely presented. Then, $K_2 / {\rm Im}(\varphi^K)$ and $K_1 \simeq F_1$ are finitely generated, that is, $(K_1,K_2)_{\varphi^K}$ is finitely generated, and hence $(0,M_2)_{0}$ is finitely presented. 

\item Firstly, note that
\[
\pd((M_1, U \otimes_A M_1)_{{\rm id}}) \leq \max\{\pd(M_1) + 1, \pd(U \otimes_A M_1)\},
\]
where $\pd(U \otimes_A M_1) \leq n$ by the arguments used in the proof of part (1). Then, $\pd((M_1, U \otimes_A M_1)_{{\rm id}}) \leq n+1$. On the other hand, we can also note that $(M_1, U \otimes_A M_1)_{{\rm id}}$ is finitely generated. Then, there is an exact sequence 
\[
0 \to (K_1,K_2)_{\varphi^K} \to (F_1,F_2)_{\varphi^F} \to (M_1, U \otimes_A M_1)_{{\rm id}} \to 0
\]
where $(F_1,F_2)_{\varphi^F}$ is finitely generated and projective, from which we can obtain the following commutative diagram with exact rows:
\[
\begin{tikzpicture}[description/.style={fill=white,inner sep=2pt}] 
\matrix (m) [matrix of math nodes, row sep=2em, column sep=2em, text height=1.25ex, text depth=0.25ex] 
{ 
0 & U \otimes_A K_1 & U \otimes_A F_1 & U \otimes_A M_1 & 0 \\
0 & K_2 & F_2 & U \otimes_A M_1 & 0 \\ 
}; 
\path[->] 
(m-1-1) edge (m-1-2) (m-1-2) edge (m-1-3) edge node[right] {\footnotesize$\varphi^K$} (m-2-2) (m-1-3) edge (m-1-4) edge node[right] {\footnotesize$\varphi^F$} (m-2-3) (m-1-4) edge (m-1-5) edge node[right] {\footnotesize${\rm id}$} (m-2-4)
(m-2-1) edge (m-2-2) (m-2-2) edge (m-2-3) (m-2-3) edge (m-2-4) (m-2-4) edge (m-2-5)
;
\end{tikzpicture} 
\]
By the snake lemma, we have that $K_2 / {\rm Im}(\varphi^K) \simeq F_2 / {\rm Im}(\varphi^F)$, and so $K_2 / {\rm Im}(\varphi^K)$ is finitely generated. Note that the exact sequence $0 \to K_1 \to F_1 \to M_1 \to 0$ implies that $K_1$ is also finitely generated. It then follows that $(K_1,K_2)_{\varphi^K}$ is finitely generated, and hence $(M_1,U \otimes_A M_1)_{{\rm id}}$ is finitely presented. 
\end{enumerate}
\end{proof}

The following is a consequence of the previous proposition and \cite[Lems. 3.2 \& 3.5]{Mao20}.

\begin{proposition}
The following assertions hold:
\begin{enumerate}[(1)]
\item $(N_1,N_2)_{\varphi^N} \in \mathsf{A}^T_n$ $\Rightarrow$ $N_2 \in \mathsf{A}^B_n$.

\item $(L_1,L_2)_{\varphi^L} \in \mathsf{F}^T_n$ $\Rightarrow$ $L_2 \in \mathsf{F}^B_n$.

\item $N_1 \in \mathsf{A}^A_n$ $\Rightarrow$ $(N_1,0)_{0} \in \mathsf{A}^T_n$.

\item $L_1 \in \mathsf{F}^{A^{\rm op}}_n$ $\Rightarrow$ $(L_1,0)_{0} \in \mathsf{F}^{T^{\rm op}}_n$.
\end{enumerate}
If in addition $U_A$ is flat, the following implication also holds:
\begin{enumerate}[(1)]
\setcounter{enumi}{4}
\item $L_1 \in \mathsf{F}^{A}_n$ $\Rightarrow$ $(L_1,U \otimes_A L_1)_{{\rm id}} \in \mathsf{F}^{T}_n$.
\end{enumerate}
If in addition ${}_BU$ is projective and $U_A$ is flat, the following implications also hold:
\begin{enumerate}[(1)]
\setcounter{enumi}{5}
\item $(N_1,N_2)_{\varphi^N} \in \mathsf{A}^T_{n+1}$ $\Rightarrow$ $N_1 \in \mathsf{A}^A_n$.

\item $(L_1,L_2)_{\varphi^L} \in \mathsf{F}^T_{n+1}$ $\Rightarrow$ $L_1 \in \mathsf{F}^A_n$.
\end{enumerate}
\end{proposition}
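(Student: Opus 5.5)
The plan is to reduce every item to an Ext/Tor computation over $T$ by means of the standard adjunctions relating ${\rm Mod}(T)$ to ${\rm Mod}(A)$ and ${\rm Mod}(B)$. These are exactly what \cite[Lems. 3.2 \& 3.5]{Mao20} record. For $X=(X_1,X_2)_{\varphi}$ one has Ext-isomorphisms
\[
\Ext^i_T((0,X_2)_0,N)\cong\Ext^i_B(X_2,N_2)\quad\text{and}\quad\Ext^i_T(X,(N_1,0)_0)\cong\Ext^i_A(X_1,N_1),
\]
and the analogous Tor-isomorphisms for right $T$-modules. The underlying reason is that $(0,-)$ is exact and preserves projectives, $(-,0)$ is right adjoint to the exact functor $X\mapsto X_1$, and $(L_1,U\otimes_A L_1)_{\rm id}$ is extension of scalars along $A\to T$, which is exact when $U_A$ is flat. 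I would invoke these isomorphisms directly and then combine them with the previous proposition, which tells us how $\mathsf{FP}^{\leq n}$ behaves under passage between $T$ and $A$, $B$.

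For (1) and (2), take $F\in\mathsf{FP}^{\leq n}_B$. Part (3) of the previous proposition gives $(0,F)_0\in\mathsf{FP}^{\leq n}_T$. Then
\[
\Ext^1_B(F,N_2)\cong\Ext^1_T((0,F)_0,N)=0,
\]
and the Tor version yields (2). For (3) and (4), take $X\in\mathsf{FP}^{\leq n}_T$. Part (1) of the previous proposition gives $X_1\in\mathsf{FP}^{\leq n}_A$, so
\[
\Ext^1_T(X,(N_1,0)_0)\cong\Ext^1_A(X_1,N_1)=0,
\]
and similarly $\Tor^T_1((L_1,0)_0,X)\cong\Tor^A_1(L_1,X_1)=0$. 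For (5), flatness of $U_A$ makes $T\otimes_A-$ exact, so $\Tor^T_1(T\otimes_A L_1,X)\cong\Tor^A_1(L_1,X|_A)$ by \cite[Coroll. 10.72]{RotmanBook}. Here one must check that the restriction of $X\in\mathsf{FP}^{\leq n}$ lies in $\mathsf{FP}^{\leq n}_A$, which again comes from part (1) of the previous proposition.

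For (6) and (7), take $F\in\mathsf{FP}^{\leq n}_A$. Part (4) of the previous proposition gives $(F,U\otimes_A F)_{\rm id}\in\mathsf{FP}^{\leq n+1}_T$; this is where the hypotheses that ${}_BU$ is projective and $U_A$ is flat are used. Since $(F,U\otimes_AF)_{\rm id}=T\otimes_A F$ is left adjoint to restriction and is exact,
\[
\Ext^1_T((F,U\otimes_AF)_{\rm id},N)\cong\Ext^1_A(F,N_1),
\]
so $N\in\mathsf{A}^T_{n+1}$ forces $\Ext^1_A(F,N_1)=0$. The Tor version gives (7).

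The main obstacle I expect is the left/right bookkeeping in the Tor statements (2), (4), (5), (7). For right $T$-modules the roles of $A$ and $B$ and of the bimodule $U$ swap: right $T$-modules look like $(L_1,L_2)$ with a map $L_2\otimes_B U\to L_1$. I would therefore carefully match the Tor-isomorphisms of \cite[Lem. 3.5]{Mao20} with the correct side before applying the previous proposition.
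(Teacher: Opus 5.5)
Your strategy is the paper's own: its proof of this proposition is just ``the previous proposition plus Mao's Ext/Tor lemmas''. Items (1)--(4), (6) and (7) go through as you describe, once $T\otimes_A F$ is read as $Te_1\otimes_A F=(F,U\otimes_AF)_{\rm id}$ and $X|_A$ as $e_1X=X_1$, with $e_1,e_2$ the diagonal idempotents of $T$.

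The genuine gap is in (5), exactly at the left/right bookkeeping you postponed. $U\otimes_AL_1$ only makes sense for a \emph{left} $A$-module $L_1$, so $(L_1,U\otimes_AL_1)_{\rm id}=Te_1\otimes_AL_1$ is a \emph{left} $T$-module. Your $\Tor^T_1(T\otimes_AL_1,X)$ puts it in the wrong slot. The test objects are therefore right $T$-modules $G\in\mathsf{FP}^{\leq n}$, and flatness of $U_A$ gives $\Tor^T_1(G,Te_1\otimes_AL_1)\cong\Tor^A_1(Ge_1,L_1)$. Part (1) of the previous proposition says nothing about $Ge_1$. For right $T$-modules the roles of $A$ and $B$ are interchanged, so the right-module analogue of that part only yields $Ge_2\in\mathsf{FP}^{\leq n}$. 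For $Ge_1$ you only get a length-$n$ resolution by direct summands of direct sums of copies of $A_A\oplus U_A$. This gives $\fd(Ge_1)\leq n$, but not $Ge_1\in\mathsf{FP}^{\leq n}$; already $G=e_2T$ has $Ge_1=U_A$.

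This cannot be patched, because (5) is false as stated; the paper's one-line derivation breaks at the same point. Let $A=k[[x,y,z]]/(x^2,xy,xz)$, $\mathfrak{p}=(x,y)$, $B=U=A_{\mathfrak p}$ and $n=1$.
\begin{enumerate}[(i)]
\item $U_A$ is flat, and ${}_BU$ is even free.
\item $A_{\mathfrak p}\cong k[[y,z]]_{(y)}$ is a discrete valuation ring with uniformizer $y$.
\item $A$ is local of depth $0$, since the maximal ideal annihilates $x$. By the Auslander--Buchsbaum formula every module in $\mathsf{FP}^{\leq 1}_A$ is free, so every $A$-module is $1$-flat; in particular $L_1=A/\mathfrak p\in\mathsf{F}^A_1$.
\end{enumerate}
Left multiplication by $y\in B$ is a monomorphism $e_2T\to e_2T$ of right $T$-modules, so its cokernel $G$ lies in $\mathsf{FP}^{\leq 1}$. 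Since $e_2T\otimes_T(L_1,U\otimes_AL_1)_{\rm id}\cong U\otimes_AA/\mathfrak p=\kappa(\mathfrak p)$, we get
\[
\Tor^T_1\bigl(G,(L_1,U\otimes_AL_1)_{\rm id}\bigr)\cong{\rm Ker}\bigl(y\colon\kappa(\mathfrak p)\to\kappa(\mathfrak p)\bigr)=\kappa(\mathfrak p)\neq 0 .
\]
So (5) needs a stronger hypothesis. For instance, if $U_A$ is finitely generated and projective, then $Ge_1\in\mathsf{FP}^{\leq n}_{A^{\rm op}}$ for every right $T$-module $G\in\mathsf{FP}^{\leq n}$, and your argument then proves (5).
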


%%%%%%%%%%%%%%%%%%%%%%%%%%%%%%%%%%%%%
%%%%%%%%%%%%%%%%%%%%%%%%%%%%%%%%%%%%%

\subsection*{$\bm{n}$-Absolutely pure and $\bm{n}$-flat modules and chain complexes}

Let $\Rch$ and $\chR$ denote the categories of complexes of $R$-modules and $R^{\rm op}$-modules, and their corresponding chain maps, respectively. Objects in $\Rch$ are denoted as 
\[
X_\bullet = \cdots \to X_{m+1} \xrightarrow{\partial_{m+1}} X_m \xrightarrow{\partial_m} X_{m-1} \to \cdots
\]
where $X_m \in \Rmod$ and $\partial_{m} \circ \partial_{m+1} = 0$ for every $m \in \mathbb{Z}$. The kernels and images of the differential maps $\partial_m$ will be denoted by $Z_m(X_\bullet)$ and $B_{m-1}(X_\bullet)$, respectively. We use the same notations for complexes of $R^{\rm op}$-modules. The definitions given below will be stated mostly for $\Rch$, since they have their corresponding counterparts in $\chR$.

The definitions of finitely generated and finitely presented complexes can be traced back to Enochs and García Rozas work \cite[Def. 2.1]{EnochsGarciaRozas98}. Moreover, they characterized these complexes in \cite[Lem. 2.2]{EnochsGarciaRozas98}. Concretely, a chain complex $X_\bullet$ is finitely generated (resp., finitely presented) if, and only if, $X_\bullet$ is bounded, above and below, and each $X_m$ is a finitely generated (resp., finitely presented) $R$-module.

We recall also from García Rozas \cite{GarciaRozas99} that a chain complex $X_\bullet \in \Rch$ has projective dimension at most $n \in \mathbb{Z}_{\geq 0}$ if there exists an exact sequence 
\[
0 \to P^n_\bullet \to P^{n-1}_\bullet \to \cdots \to P^1_\bullet \to P^0_\bullet \to X_\bullet \to 0
\]
in $\Rch$ where each $P^k_\bullet$ is a projective complex. By \cite[dual of Thm. 3.1.3]{GarciaRozas99}, $\textrm{pd}(X_\bullet) \leq n$ if, and only if, $X_\bullet$ is exact and $\textrm{pd}(Z_m(X_\bullet)) \leq n$ for every $m \in \mathbb{Z}$. 

From now on, let us denote by $\mathsf{FP}^{\leq n}_{\textrm{Ch}}$ the class of finitely presented chain complexes with projective dimension at most $n$. From the previous descriptions, we can get the following characterization of $\mathsf{FP}^{\leq n}_{\textrm{Ch}}$.

\begin{proposition}\label{prop:FPn-complex}
A chain complex $F_\bullet$ belongs to $\mathsf{FP}^{\leq n}_{{\rm Ch}}$ if, and only if, $F_\bullet$ is bounded, exact and $F_m \in \mathsf{FP}^{\leq n}$ for every $m \in \mathbb{Z}$. 
\end{proposition}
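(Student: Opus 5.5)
We combine the two characterizations recalled just before the statement: the Enochs--García Rozas description of finitely presented complexes (\cite[Lem. 2.2]{EnochsGarciaRozas98}) and García Rozas' description of complexes of projective dimension at most $n$ (\cite[dual of Thm. 3.1.3]{GarciaRozas99}). Thus $F_\bullet \in \mathsf{FP}^{\leq n}_{\rm Ch}$ if and only if $F_\bullet$ is bounded, each $F_m$ is finitely presented, $F_\bullet$ is exact, and ${\rm pd}(Z_m(F_\bullet)) \leq n$ for every $m$. It therefore suffices to show the following. For a bounded exact complex with finitely presented components, ${\rm pd}(Z_m(F_\bullet)) \leq n$ for all $m$ holds if and only if ${\rm pd}(F_m) \leq n$ for all $m$.

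For the forward implication, exactness gives short exact sequences $0 \to Z_m(F_\bullet) \to F_m \to Z_{m-1}(F_\bullet) \to 0$, using $B_{m-1}(F_\bullet) = Z_{m-1}(F_\bullet)$. Since the two end terms have projective dimension at most $n$, the horseshoe lemma (or the long exact Ext sequence) gives ${\rm pd}(F_m) \leq n$. Hence $F_m \in \mathsf{FP}^{\leq n}$.

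The converse is the main obstacle, because in a short exact sequence $0 \to Z \to F \to Z' \to 0$ the projective dimension of $Z'$ can be one more than that of $F$. The plan is to use boundedness and induct from the bottom. Suppose $F_m = 0$ for $m < a$. Then $Z_{a-1}(F_\bullet) = 0$, so $Z_a(F_\bullet) = F_a$ has projective dimension at most $n$. Inductively, in $0 \to Z_m \to F_m \to Z_{m-1} \to 0$ we know ${\rm pd}(Z_{m-1}) \leq n$ and ${\rm pd}(F_m) \leq n$. From the long exact sequence, $\Ext^{n+1}_R(F_m,-) \to \Ext^{n+1}_R(Z_m,-) \to \Ext^{n+2}_R(Z_{m-1},-)$, and both outer terms vanish. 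Therefore ${\rm pd}(Z_m) \leq n$, since the kernel side only loses at most one and here does not even increase. This yields ${\rm pd}(Z_m(F_\bullet)) \leq n$ for all $m$, and also $Z_m = 0$ above the top degree.

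Finally, we check that no finiteness issue arises. The cycles $Z_m(F_\bullet)$ need not be finitely presented for the statement, since only the components enter the finite-presentation criterion, so nothing more is required. Assembling the two characterizations with these dimension computations gives the equivalence.
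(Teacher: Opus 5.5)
Your proposal is correct and follows the paper's proof essentially step for step. Both reduce the statement to the Enochs--Garc\'ia Rozas and Garc\'ia Rozas characterizations, obtain ${\rm pd}(F_m)\leq n$ from the short exact sequences $0 \to Z_m(F_\bullet) \to F_m \to Z_{m-1}(F_\bullet) \to 0$, and for the converse climb up from the lowest nonzero degree to get ${\rm pd}(Z_m(F_\bullet)) \leq n$; your explicit Ext computation just spells out what the paper calls ``repeating this procedure.''
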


\begin{proof}
Let us first suppose that $F_\bullet \in \mathsf{FP}^{\leq n}_{{\rm Ch}}$. In particular, $F_\bullet$ is finitely presented, and so it is bounded with each $F_m$ a finitely presented $R$-module. Without loss of generality, suppose $F_\bullet$ is of the form
\[
F_\bullet = \cdots \to 0 \to F_l \to F_{l-1} \to \cdots \to F_1 \to F_0 \to 0 \to \cdots.
\]
Since $F_\bullet$ has also projective dimension at most $n$, we have that $F_\bullet$ is exact and its kernels have projective dimension at most $n$. It then follows that $\textrm{pd}(F_0) \leq n$ and $\textrm{pd}(Z_1(F_\bullet)) \leq n$, which in turn implies that $\textrm{pd}(F_1) \leq n$. Following these procedure, we can conclude that $\textrm{pd}(F_m) \leq n$ for every $m \in \mathbb{Z}$. Hence, $F_\bullet$ is bounded, exact and with $F_m \in \mathsf{FP}^{\leq n}$ for every $m \in \mathbb{Z}$.

Now suppose that $F_\bullet$ is bounded, exact and with each $F_m \in \mathsf{FP}^{\leq n}$. Again, we may assume that $F_\bullet$ is of the form
\[
F_\bullet = \cdots \to 0 \to F_l \to F_{l-1} \to \cdots \to F_1 \to F_0 \to 0 \to \cdots.
\]
Then, clearly $F_\bullet$ is finitely presented. On the other hand, from the short exact sequence $0 \to Z_1(F_\bullet) \to F_1 \to F_0 \to 0$ we get that $\textrm{pd}(Z_1(F_\bullet)) \leq n$. Repeating this procedure yields that $\textrm{pd}(Z_m(F_\bullet)) \leq n$ for every $m \in \mathbb{Z}$. Therefore, $\textrm{pd}(F_\bullet) \leq n$. 
\end{proof}

We can define $n$-absolutely pure and $n$-flat chain complexes in the same way as their module counterparts, that is, as Ext and Tor orthogonal complements of $\mathsf{FP}^{\leq n}_{{\rm Ch}}$. Recall from \cite{GarciaRozas99} that given $Y_\bullet \in \chR$ and $X_\bullet \in \Rch$, their tensor product $Y_\bullet \otimes X_\bullet$ is defined as the chain complex of abelian groups
\[
(Y_\bullet \otimes X_\bullet)_m = \bigoplus_{k \in \mathbb{Z}} (Y_k \otimes_R X_{m-k})
\]
with differentials given by 
\[
\partial(y \otimes x) = \partial(y) \otimes x + (-1)^{|y|} y \otimes \partial(x)
\]
where $|y|$ denotes the degree of $y$. On the other hand, the modified tensor product $Y_\bullet \overline{\otimes} X_\bullet$ is defined as the chain complex of abelian groups
\[
(Y_\bullet \overline{\otimes} X_\bullet)_m = \frac{(Y_\bullet \otimes X_\bullet)_m}{B_m(Y_\bullet \otimes X_\bullet)}
\]
with differentials given by 
\[
\partial(\overline{y \otimes x}) = \overline{\partial(y) \otimes x}.
\]
The left derived functors of the modified tensor product will be denoted by 
\[
\overline{{\rm Tor}}_i(-,\sim) \colon \chR \times \Rch \to {\rm Ch}(\mathbb{Z}\mbox{-}{\rm Mod}).
\] 
The right derived extension functors of the categorical ${\rm Hom}(-,\sim)$ in $\Rch$, on the other hand, are defined in the same way as for modules, and will be denoted by
\[
{\rm Ext}^i(-,\sim) \colon \Rch \times \Rch \to {\rm Ch}(\mathbb{Z}\mbox{-}{\rm Mod}).
\]

\mbox{}

\begin{definition}
Let $M_\bullet \in \Rch$ and $N_\bullet \in \chR$. We say that:
\begin{enumerate}[(1)]
\item $M_\bullet$ is \textbf{$\bm{n}$-absolutely pure} if ${\rm Ext}^i(F_\bullet,M_\bullet) = 0$ for every $F_\bullet \in \mathsf{FP}^{\leq n}_{{\rm Ch}}$;

\item $N_\bullet$ is \textbf{$\bm{n}$-flat} if $\overline{{\rm Tor}}_i(N_\bullet,F_\bullet) = 0$ for every $F_\bullet \in \mathsf{FP}^{\leq n}_{{\rm Ch}}$. \\
\end{enumerate}
\end{definition}

Chain complex versions of $n$-absolutely pure and $n$-flat modules are also defined and studied in \cite{SelvarajSaravanan16}. The approach given there is different to our previous definition, since the authors consider the projective dimension of chain complexes in the sense of Avramov and Foxby \cite{AvramovFoxby91}, and not García Rozas'. Then, the characterizations of $n$-absolutely pure\footnote{These complexes are called $n$-FP-injective in \cite{SelvarajSaravanan16}.} and $n$-flat complexes obtained in \cite{SelvarajSaravanan16} differ from ours, presented below.

\begin{proposition}\label{prop:characterization-nabs_complex}
A chain complex $M_\bullet \in \Rch$ is $n$-absolutely pure if, and only if, $M_m \in \mathsf{A}_n$ for every $m \in \mathbb{Z}$. 
\end{proposition}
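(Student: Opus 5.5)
The plan is to use the two standard families of finitely presented complexes with bounded projective dimension: the disk complexes $D^m(F)$ (with $F$ in degrees $m$ and $m-1$, joined by the identity) and their finite extensions. We interpret the definition as requiring $\Ext^1(F_\bullet,M_\bullet)=0$, the group of extensions in $\Rch$. By Proposition \ref{prop:FPn-complex}, for $F \in \mathsf{FP}^{\leq n}$ the disk $D^m(F)$ is bounded and exact with components in $\mathsf{FP}^{\leq n}$, so $D^m(F) \in \mathsf{FP}^{\leq n}_{\rm Ch}$. Moreover, every $F_\bullet \in \mathsf{FP}^{\leq n}_{\rm Ch}$ is built from finitely many such disks by extensions. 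Concretely, if $F_\bullet$ is concentrated in degrees $0,\dots,l$, exactness gives that the boundaries $B_m(F_\bullet)=Z_m(F_\bullet)$ are finitely presented of projective dimension $\leq n$. This follows by induction from $0 \to Z_1 \to F_1 \to F_0 \to 0$ together with Remark \ref{rmk:FP2}-type arguments, or more simply because the kernel of an epimorphism between finitely presented modules is finitely generated, and it is finitely presented because the sequence is a bounded exact sequence of finitely presented modules. Hence $F_\bullet$ is a finite iterated extension of the disks $D^{m}(Z_{m-1}(F_\bullet))$, each lying in $\mathsf{FP}^{\leq n}_{\rm Ch}$.

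The key adjunction is the standard isomorphism $\Ext^1_{\rm Ch}(D^m(F),M_\bullet) \cong \Ext^1_R(F,M_m)$. This holds because $D^m(-)$ is left adjoint to the evaluation functor $X_\bullet \mapsto X_m$. Both functors are exact, and $D^m$ sends projectives to projectives, so the adjunction passes to $\Ext^1$. Alternatively, one can check the isomorphism directly on short exact sequences: an extension $0\to M_\bullet\to E_\bullet\to D^m(F)\to 0$ splits if and only if it splits in degree $m$.

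For ($\Rightarrow$), given $F\in\mathsf{FP}^{\leq n}$ and $m\in\mathbb{Z}$, we have $0=\Ext^1(D^m(F),M_\bullet)\cong\Ext^1_R(F,M_m)$, so $M_m\in\mathsf{A}_n$. For ($\Leftarrow$), the same isomorphism gives $\Ext^1(D^m(F),M_\bullet)=0$ for all disks on modules in $\mathsf{FP}^{\leq n}$. Because the left orthogonal ${}^{\perp_1}M_\bullet$ is closed under extensions (by the long exact Ext sequence), the finite filtration of an arbitrary $F_\bullet\in\mathsf{FP}^{\leq n}_{\rm Ch}$ by disks then yields $\Ext^1(F_\bullet,M_\bullet)=0$.

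The main obstacle is the decomposition step. We must check that the cycle modules $Z_m(F_\bullet)$ are genuinely finitely presented; this follows by descending induction on the length using the exact sequences $0\to Z_m\to F_m\to Z_{m-1}\to 0$, starting with $Z_0=F_0$. We must also check that their projective dimensions are $\le n$, which follows by the same induction, as in the proof of Proposition \ref{prop:FPn-complex}. A further point is that $F_\bullet$ is an extension $0\to D^{m}(Z_{m-1})\to F_\bullet\to F'_\bullet\to 0$ with $F'_\bullet$ shorter. Indeed, for the bottom degree one may split off the disk $D^1(F_0)$ as a quotient complex with kernel the truncation $\cdots\to F_2\to Z_1\to 0$, which is again bounded, exact and has components in $\mathsf{FP}^{\leq n}$. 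Induction on the length then finishes the argument.
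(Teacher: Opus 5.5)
Your argument is correct and is essentially the paper's: the same isomorphism $\Ext^1(D^m(F),M_\bullet)\cong\Ext^1_R(F,M_m)$ (the paper cites Gillespie for it, while you derive it from the adjunction $D^m\dashv(-)_m$) and the same induction on length via the long exact $\Ext$ sequence. The only difference is that you split off the bottom disk $D^1(F_0)$ as a quotient, whereas the paper splits off the top disk $D^m(F_m)$ as a subcomplex, whose quotient has $Z_{m-2}(F_\bullet)\cong F_{m-1}/F_m$ in its top degree.

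One step needs repair: in your variant, finite presentation of $Z_1(F_\bullet)$ does not follow from an induction starting at $Z_0=F_0$. A kernel of an epimorphism between finitely presented modules is only finitely generated, and Remark~\ref{rmk:FP2} would require Lee $n$-coherence. You must instead argue from the top down, using $Z_{l-1}\cong F_l$ and $Z_{m-1}\cong F_m/Z_m$, each a cokernel of a map between finitely presented modules. The paper's top-down peeling gets finite presentation of the new module for free, while your bottom-up peeling gets the bound $\pd(Z_1)\le n$ for free.
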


\begin{proof}
Let us first assume that $M_\bullet$ is $n$-absolutely pure, and consider $F \in \mathsf{FP}^{\leq n}$. By Proposition \ref{prop:FPn-complex},  $D^m(F) \in \mathsf{FP}^{\leq n}_{{\rm Ch}}$ for every $m \in \mathbb{Z}$, where $D^m(F)$ is the disk complex with $F$ at degrees $m$ and $m-1$, zero at the rest of the degrees, and where the only nonzero differential map is the identity $F \to F$. By \cite[Lem. 3.1 (5)]{GillespieFlatModel}, we have that a natural isomorphism $0 = \Ext^1(D^m(F),M_\bullet) \cong \Ext^1_R(F,M_m)$. Since $F \in \mathsf{FP}^{\leq n}$ is arbitrary, we have that $M_m \in \mathsf{A}_n$ for every $m \in \mathbb{Z}$. 

Now suppose that $M_\bullet$ is a complex with components in $\mathsf{A}_n$, and let $F_\bullet \in \mathsf{FP}^{\leq n}_{{\rm Ch}}$. Without loss of generality, suppose that $F_\bullet$ is of the form
\[
F_\bullet = \cdots \to 0 \to F_m \to F_{m-1} \to \cdots \to F_1 \to F_0 \to 0 \to \cdots.
\]
We use induction on $m$ to show that $\Ext^1(F_\bullet, M_\bullet) = 0$.
\begin{itemize}
\item Base case $m = 1$: In this case, $F_\bullet$ is isomorphic to a disk complex $D^1(F)$ for some $F \in \mathsf{FP}^{\leq n}$. Thus, we get $\Ext^1(F_\bullet, M_\bullet) \cong \Ext^1(D^1(F),M_\bullet) \cong \Ext^1_R(F,M_1) = 0$.

\item Suppose that $\Ext^1(F'_\bullet,M_\bullet) = 0$ for every $F'_\bullet \in \mathsf{FP}^{\leq n}_{{\rm Ch}}$ of the form
\[
F'_\bullet = \cdots \to 0 \to F'_{m-1} \to \cdots \to F'_1 \to F'_0 \to 0 \to \cdots,
\]
and let $F_\bullet$ as above. We can form the following commutative diagram with exact rows and columns:
\[
\begin{tikzpicture}[description/.style={fill=white,inner sep=2pt}] 
\matrix (m) [matrix of math nodes, row sep=1.5em, column sep=1.5em, text height=1.25ex, text depth=0.25ex] 
{ 
0 & \cdots & 0 & 0 & 0 & 0 & \cdots & 0 & 0 & 0 & \cdots \\
D^m(F_m) \colon & \cdots & 0 & F_m & F_m & 0 & \cdots & 0 & 0 & 0 & \cdots \\
F_\bullet \colon & \cdots & 0 & F_m & F_{m-1} & F_{m-2} & \cdots & F_1 & F_0 & 0 & \cdots \\
F'_\bullet \colon & \cdots & 0 & 0 & Z_{m-2}(F_\bullet) & F_{m-2} & \cdots & F_1 & F_0 & 0 & \cdots \\
0 & \cdots & 0 & 0 & 0 & 0 & \cdots & 0 & 0 & 0 & \cdots \\
}; 
\path[->] 
(m-1-1) edge (m-2-1) (m-2-1) edge (m-3-1) (m-3-1) edge (m-4-1) (m-4-1) edge (m-5-1)
(m-1-3) edge (m-2-3) (m-1-4) edge (m-2-4) (m-1-5) edge (m-2-5) (m-1-6) edge (m-2-6) (m-1-8) edge (m-2-8) (m-1-9) edge (m-2-9) (m-1-10) edge (m-2-10)
(m-4-3) edge (m-5-3) (m-4-4) edge (m-5-4) (m-4-5) edge (m-5-5) (m-4-6) edge (m-5-6) (m-4-8) edge (m-5-8) (m-4-9) edge (m-5-9) (m-4-10) edge (m-5-10)
(m-2-2) edge (m-2-3) (m-2-3) edge (m-2-4) (m-2-5) edge (m-2-6) (m-2-6) edge (m-2-7) (m-2-7) edge (m-2-8) (m-2-8) edge (m-2-9) (m-2-9) edge (m-2-10) (m-2-10) edge (m-2-11)
(m-3-2) edge (m-3-3) (m-3-3) edge (m-3-4) (m-3-4) edge (m-3-5) (m-3-5) edge (m-3-6) (m-3-6) edge (m-3-7) (m-3-7) edge (m-3-8) (m-3-8) edge (m-3-9) (m-3-9) edge (m-3-10) (m-3-10) edge (m-3-11)
(m-4-2) edge (m-4-3) (m-4-3) edge (m-4-4) (m-4-4) edge (m-4-5) (m-4-5) edge (m-4-6) (m-4-6) edge (m-4-7) (m-4-7) edge (m-4-8) (m-4-8) edge (m-4-9) (m-4-9) edge (m-4-10) (m-4-10) edge (m-4-11)
(m-2-3) edge (m-3-3) (m-3-3) edge (m-4-3) (m-3-4) edge (m-4-4)
(m-2-5) edge (m-3-5) (m-3-5) edge (m-4-5)
(m-2-6) edge (m-3-6) 
(m-2-8) edge (m-3-8) (m-2-9) edge (m-3-9) (m-2-10) edge (m-3-10) (m-3-10) edge (m-4-10)
;
\path[-,font=\scriptsize]
(m-2-4) edge [double, thick, double distance=2pt] (m-2-5)
(m-2-4) edge [double, thick, double distance=2pt] (m-3-4)
(m-3-6) edge [double, thick, double distance=2pt] (m-4-6)
(m-3-8) edge [double, thick, double distance=2pt] (m-4-8)
(m-3-9) edge [double, thick, double distance=2pt] (m-4-9)
;
\end{tikzpicture} 
\]
From the short exact sequence $0 \to F_m \to F_{m-1} \to Z_{m-2}(F_\bullet) \to 0$, since $F_m$ and $F_{m-1}$ are finitely presented, we have that so is $Z_{m-2}(F_\bullet)$. Note also that $\textrm{pd}(Z_{m-2}(F_\bullet)) \leq n$. Thus, $F'_\bullet \in \mathsf{FP}^{\leq n}_{{\rm Ch}}$. From the short exact sequence $0 \to D^m(F_m) \to F_\bullet \to F'_\bullet \to 0$, we obtain the exact sequence $$\Ext^1(F'_\bullet,M_\bullet) \to \Ext^1(F_\bullet, M_\bullet) \to \Ext^1(D^m(F_m),M_\bullet)$$ of abelian groups where $\Ext^1(F'_\bullet,M_\bullet) = 0$ by the induction hypothesis, and $\Ext^1(D^m(F_m),M_\bullet) = 0$ by the base case. It follows that $\Ext^1(F_\bullet, M_\bullet) = 0$ for every $F_\bullet \in \mathsf{FP}^{\leq n}_{{\rm Ch}}$, and so $M_\bullet$ is $n$-absolutely pure. 
\end{itemize}
\end{proof}

From the previous result and \cite[Def. 3.1]{GillespieDegreewise}, we have that the class of $n$-absolutely pure complexes in $\Rch$ coincides with the class of degreewise $n$-absolutely pure complexes, denoted by ${\rm dw}\,\widetilde{\mathsf{A}_n}$. In order to obtain a similar description for $n$-flat complexes, we need a Tor analog for \cite[Lem. 3.1 (5)]{GillespieFlatModel}. We are not aware if this result appears in the literature, so we provide a proof for the reader's convenience. In what follows, given a chain complex $Y_\bullet \in \chR$, the \emph{$m$-th suspension} of $Y_\bullet$ is defined as the chain complex $\Sigma^m(Y_\bullet)$ given by 
\[
[\Sigma^m(Y_\bullet)]_k = Y_{k-m}
\] 
with differentials $\partial_k = (-1)^m\partial^{Y_\bullet}_{k-m}$ for every $k \in \mathbb{Z}$.

\begin{lemma}\label{lem:TorAnalog}
For every $Y_\bullet \in \chR$ and $N \in \Rmod$, there is a natural isomorphism 
\[
\overline{{\rm Tor}}_i(Y_\bullet,D^m(N)) \cong {\rm Tor}^R_i(\Sigma^m(Y_\bullet),N),
\]
where ${\rm Tor}^R_i(\Sigma^m(Y_\bullet),N)$ is the complex
\[
\cdots \to {\rm Tor}^R_i([\Sigma^m(Y_\bullet)]_{k+1},N) \to {\rm Tor}^R_i([\Sigma^m(Y_\bullet)]_{k},N) \to {\rm Tor}^R_i([\Sigma^m(Y_\bullet)]_{k-1},N) \to \cdots.
\]
\end{lemma}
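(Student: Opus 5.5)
We plan to compute $\overline{{\rm Tor}}_i(Y_\bullet,D^m(N))$ by resolving the second variable. Take a projective resolution $\cdots \to P_1 \to P_0 \to N \to 0$ in $\Rmod$. Since $D^m(-)$ is exact and sends projective modules to projective complexes, applying it yields a projective resolution $\cdots \to D^m(P_1) \to D^m(P_0) \to D^m(N) \to 0$ in $\Rch$. We assume the standard fact that $\overline{{\rm Tor}}_i$ is balanced, or at least computable by projective resolutions in the second variable; this is how it is set up in \cite{GarciaRozas99}. Then $\overline{{\rm Tor}}_i(Y_\bullet,D^m(N))$ is the $i$-th homology, taken degreewise, of the complex of complexes $Y_\bullet \overline{\otimes} D^m(P_\bullet)$.

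The key step is a natural isomorphism of complexes
\[
Y_\bullet \overline{\otimes} D^m(P) \cong \Sigma^m(Y_\bullet) \otimes_R P
\]
for every $P \in \Rmod$, natural in $P$ and $Y_\bullet$. Here the right-hand side denotes the degreewise tensor product, with components $Y_{k-m}\otimes_R P$ and differential $(-1)^m\partial\otimes 1$. To obtain it, compute $(Y_\bullet \otimes D^m(P))_k = (Y_{k-m}\otimes P)\oplus(Y_{k-m+1}\otimes P)$, where the second summand comes from $P$ sitting in degree $m-1$. Then compute the boundaries $B_k$, i.e. the image of the differential from degree $k+1$. An element $(y\otimes p, y'\otimes p')$ in degree $k+1$ maps to $(\partial y\otimes p \pm y'\otimes p',\ \partial y'\otimes p')$. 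The $Y_{k-m+1}\otimes P$ component, indexed via $y'$ against the identity of the disk, contributes enough to identify the second summand modulo boundaries with the first. Concretely, $B_k$ contains all elements of the form $(\pm y'\otimes p', \partial y'\otimes p')$. The quotient is therefore isomorphic to $Y_{k-m}\otimes P$ via the map $(a,b)\mapsto a \mp (\text{correction from } b)$, and the induced differential is $\pm\partial\otimes 1$. Tracking the sign $(-1)^{|y|}$ should produce exactly the $(-1)^m$ in the definition of $\Sigma^m$. This is essentially the Tor analogue of \cite[Lem. 3.1 (5)]{GillespieFlatModel}, and dual to the adjunction $\Hom(D^m(P),X_\bullet)\cong\Hom_R(P,X_m)$.

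Granting the key isomorphism, the double complex $Y_\bullet \overline{\otimes} D^m(P_\bullet)$ becomes $\Sigma^m(Y_\bullet)\otimes_R P_\bullet$. Its homology in the resolution direction, computed degreewise in $k$, is ${\rm Tor}^R_i([\Sigma^m(Y_\bullet)]_k,N)$. The horizontal differentials induced by $\partial$ give the complex ${\rm Tor}^R_i(\Sigma^m(Y_\bullet),N)$ described in the statement. Naturality follows because the isomorphism is natural and independent of the choice of resolution, by the comparison theorem.

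The main obstacle is the explicit identification of the boundary submodule and the sign bookkeeping in the key isomorphism. In particular, we must check that the quotient map is well defined and is an isomorphism, and not merely surjective. Our first attempt would be to write down the candidate inverse $Y_{k-m}\otimes P \to (Y_\bullet\overline{\otimes}D^m(P))_k$, $y\otimes p\mapsto \overline{(y\otimes p,0)}$, and verify directly that it is surjective. We would then verify that its kernel is trivial by showing that $B_k\cap (Y_{k-m}\otimes P\oplus 0)=0$, using the fact that the identity map in $D^m(P)$ makes the relevant component injective.
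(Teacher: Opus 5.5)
Your proof is correct in outline, but it takes a different route from the paper. The paper resolves the \emph{first} variable. It takes a projective resolution $\cdots \to P^1_\bullet \to P^0_\bullet \to Y_\bullet \to 0$ in $\chR$ and applies the isomorphism $P^i_\bullet \overline{\otimes} D^m(N) \cong \Sigma^m(P^i_\bullet)\otimes_R N$ cited from \cite[Prop. 4.2.1 (4)]{GarciaRozas99}. It then reads off ${\rm Tor}^R_i(Y_{k-m},N)$ degreewise: projective complexes have projective components, so $\cdots \to P^1_{k-m} \to P^0_{k-m} \to Y_{k-m} \to 0$ is a projective resolution of $Y_{k-m}$. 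You instead resolve the \emph{second} variable by $D^m(P_\bullet)$, and use the same isomorphism with $Y_\bullet$ fixed and $N$ replaced by projective modules. The price is that you need balance of $\overline{{\rm Tor}}$. That holds, since $\overline{\otimes}$ with a projective complex is exact in either variable, but it is an input the paper avoids. The gain is that you never resolve $Y_\bullet$, so naturality in $Y_\bullet$ is immediate, and you prove the key isomorphism by hand rather than citing it.

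That hand computation, however, is written down incorrectly, even though your final plan survives. Take an element $(a,b)$ in degree $k+1$, with $a \in Y_{k+1-m}\otimes_R P$ paired with the copy of $P$ in degree $m$, and $b \in Y_{k+2-m}\otimes_R P$ paired with degree $m-1$. It is sent to $\bigl((\partial\otimes 1)(a),\ (-1)^{k+1-m}a + (\partial\otimes 1)(b)\bigr)$. The identity of the disk moves $a$ into the \emph{second} summand. A term $\pm y'\otimes p'$ with $y' \in Y_{k+2-m}$ cannot appear in the first component, which lives in $Y_{k-m}\otimes_R P$.

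Hence $B_k = \{ ((-1)^{k+1-m}(\partial\otimes 1)(d),\, d) : d \in Y_{k+1-m}\otimes_R P\}$, the graph of a map out of the second summand. A graph meets $(Y_{k-m}\otimes_R P)\oplus 0$ trivially and projects onto the second summand. That is precisely why $c \mapsto \overline{(c,0)}$ is an isomorphism, as in your last paragraph.

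Also, the modified differential is $\overline{y\otimes x} \mapsto \overline{\partial y \otimes x}$, so the induced differential on $Y_{k-m}\otimes_R P$ is $\partial\otimes 1$, not $(-1)^m\partial\otimes 1$. The sign of $\Sigma^m$ does not come out of the computation as you expect. This is harmless, since negating all differentials gives an isomorphic complex.
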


\begin{proof}
Let $\cdots \to P^2_\bullet \to P^1_\bullet \to P^0_\bullet \to Y_\bullet \to 0$ be a projective resolution of $Y_\bullet$. Recall that $\overline{{\rm Tor}}_i(Y_\bullet,D^m(N))$ is the $i$-th homology group of the complex 
\[
\cdots \to P^2_\bullet \overline{\otimes} D^m(N) \to P^1_\bullet \overline{\otimes} D^m(N) \to P^0_\bullet \overline{\otimes} D^m(N) \to 0.
\]
On the other hand, by \cite[Prop. 4.2.1 (4)]{GarciaRozas99} we have that there is a natural isomorphism $P^i_\bullet \overline{\otimes} D^m(N) \cong \Sigma^m(P^i_\bullet) \otimes_R N$ for every $i \in \mathbb{Z}$. Thus, we have the following isomorphism in the category of complexes of chain complexes of abelian groups:
\[
\begin{tikzpicture}[description/.style={fill=white,inner sep=2pt}] 
\matrix (m) [matrix of math nodes, row sep=2.5em, column sep=1.5em, text height=1.25ex, text depth=0.25ex] 
{ 
\cdots & P^2_\bullet \overline{\otimes} D^m(N) & P^1_\bullet \overline{\otimes} D^m(N) & P^0_\bullet \overline{\otimes} D^m(N) & 0 \\ 
\cdots & \Sigma^m(P^2_\bullet) \otimes_R N & \Sigma^m(P^1_\bullet) \otimes_R N & \Sigma^m(P^0_\bullet) \otimes_R N & 0 \\
}; 
\path[->] 
(m-1-1) edge (m-1-2) (m-1-2) edge (m-1-3) (m-1-3) edge (m-1-4) (m-1-4) edge (m-1-5)
(m-2-1) edge (m-2-2) (m-2-2) edge (m-2-3) (m-2-3) edge (m-2-4) (m-2-4) edge (m-2-5)
(m-1-2) edge node[right] {\footnotesize$\cong$} (m-2-2) (m-1-3) edge node[right] {\footnotesize$\cong$} (m-2-3) (m-1-4) edge node[right] {\footnotesize$\cong$} (m-2-4)
;
\end{tikzpicture} 
\]
This implies that the top and bottom complexes have naturally isomorphic homology groups, that is,  $\overline{{\rm Tor}}_i(Y_\bullet,D^m(N)) \cong {\rm Tor}^R_i(\Sigma^m(Y_\bullet),N)$.
\end{proof}

From the previous lemma, the proof of the following result is similar to the one shown in Proposition \ref{prop:characterization-nabs_complex}.

\begin{proposition}
A chain complex $N_\bullet \in \chR$ is $n$-flat if, and only if, $N_m \in \mathsf{F}_n$ for every $m \in \mathbb{Z}$. 
\end{proposition}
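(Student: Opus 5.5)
The proof will mirror that of Proposition \ref{prop:characterization-nabs_complex}, with Lemma \ref{lem:TorAnalog} replacing \cite[Lem. 3.1 (5)]{GillespieFlatModel}. We read $n$-flatness of $N_\bullet$ as the vanishing of $\overline{{\rm Tor}}_1(N_\bullet,F_\bullet)$ for all $F_\bullet \in \mathsf{FP}^{\leq n}_{{\rm Ch}}$, in analogy with the $\Ext^1$ reading used for absolutely pure complexes.

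\emph{Necessity.} Suppose $N_\bullet$ is $n$-flat, fix $F \in \mathsf{FP}^{\leq n}$ and $m \in \mathbb{Z}$.
\begin{enumerate}[(1)]
\item By Proposition \ref{prop:FPn-complex}, the disk complex $D^m(F)$ lies in $\mathsf{FP}^{\leq n}_{{\rm Ch}}$, since it is bounded, exact, and has components in $\mathsf{FP}^{\leq n}$.
\item Lemma \ref{lem:TorAnalog} then gives $0 = \overline{{\rm Tor}}_1(N_\bullet,D^m(F)) \cong \Tor^R_1(\Sigma^m(N_\bullet),F)$. This is the complex whose degree-$k$ term is $\Tor^R_1(N_{k-m},F)$.
\item Vanishing of the whole complex forces $\Tor^R_1(N_j,F)=0$ for every $j$.
\end{enumerate}
As $F$ is arbitrary, every $N_j \in \mathsf{F}_n$.

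\emph{Sufficiency.} Assume every $N_j \in \mathsf{F}_n$. Let $F_\bullet \in \mathsf{FP}^{\leq n}_{{\rm Ch}}$, concentrated after a shift in degrees $0,\dots,m$. We induct on the length $m$.
\begin{enumerate}[(1)]
\item \emph{Base case.} $F_\bullet \cong D^1(F)$ (or a suitable shift $D^j(F)$) with $F \in \mathsf{FP}^{\leq n}$. By Lemma \ref{lem:TorAnalog}, $\overline{{\rm Tor}}_1(N_\bullet,F_\bullet)$ is the complex of groups $\Tor^R_1(N_{k-j},F)$, which is zero.
\item \emph{Inductive step.} Use the same short exact sequence $0 \to D^m(F_m) \to F_\bullet \to F'_\bullet \to 0$ as in the proof of Proposition \ref{prop:characterization-nabs_complex}. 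There $F'_\bullet$ is shorter and lies in $\mathsf{FP}^{\leq n}_{{\rm Ch}}$, with $Z_{m-2}(F_\bullet)$ finitely presented of projective dimension $\leq n$, as already checked.
\item The long exact sequence of $\overline{{\rm Tor}}_\ast(N_\bullet,-)$, a left derived functor of a bifunctor, contains the exact segment
\[
\overline{{\rm Tor}}_1(N_\bullet,D^m(F_m)) \to \overline{{\rm Tor}}_1(N_\bullet,F_\bullet) \to \overline{{\rm Tor}}_1(N_\bullet,F'_\bullet).
\]
Its outer terms vanish by the base case and the induction hypothesis, so the middle term vanishes.
\end{enumerate}

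\emph{Main obstacle.} The delicate point is the last step: we need the long exact sequence in the \emph{second} variable for $\overline{{\rm Tor}}_i$. The functor $\overline{{\rm Tor}}_i$ is defined via projective resolutions of the first variable $Y_\bullet$, so exactness in the second variable requires balance, or at least that $P_\bullet \overline{\otimes} -$ is exact on short exact sequences for projective complexes $P_\bullet$.

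I would justify this from \cite{GarciaRozas99}, where $\overline{\otimes}$ is shown to be balanced: $\overline{{\rm Tor}}$ can be computed using resolutions in either variable. Alternatively, a projective complex is a direct sum of disks $D^k(P)$. Using $D^k(P)\overline{\otimes} X_\bullet \cong \Sigma^{k}(P \otimes_R X_\bullet)$ (the symmetric version of \cite[Prop. 4.2.1 (4)]{GarciaRozas99}), the functor $P_\bullet \overline{\otimes} -$ is exact. The horseshoe/snake argument then gives the long exact sequence.
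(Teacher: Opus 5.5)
Your proposal is correct and matches the paper's argument. The paper proves this statement by pointing to the proof of Proposition \ref{prop:characterization-nabs_complex}, with Lemma \ref{lem:TorAnalog} replacing the disk isomorphism for $\Ext^1$: disk complexes give necessity and the base case, and induction on length uses $0 \to D^m(F_m) \to F_\bullet \to F'_\bullet \to 0$. You also justify the long exact sequence of $\overline{{\rm Tor}}_\ast(N_\bullet,-)$ in the second variable, via exactness of $P_\bullet \overline{\otimes} -$ for projective complexes $P_\bullet \cong \bigoplus_k D^k(P_k)$, which the paper leaves implicit. No horseshoe argument is needed there; it is just the homology sequence of $0 \to P^\ast_\bullet \overline{\otimes} X' \to P^\ast_\bullet \overline{\otimes} X \to P^\ast_\bullet \overline{\otimes} X'' \to 0$.
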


We then have that the class of $n$-flat complexes in $\chR$ is given by ${\rm dw}\,\widetilde{\mathsf{F}_n}$. Let us summarize in the following result the properties of $n$-absolutely pure and $n$-flat complexes.

\begin{proposition}\label{prop:basic_properties_complexes}
The following assertions hold:
\begin{enumerate}[(1)]
\item $({\rm dw}\,\widetilde{\mathsf{F}_n},{\rm dw}\,\widetilde{\mathsf{A}_n})$ is a perfect duality pair. 
\item $({\rm dw}\,\widetilde{\mathsf{F}_n},[{\rm dw}\,\widetilde{\mathsf{F}_n}]^{\perp_1})$ is a perfect cotorsion pair in $\chR$. In particular, every complex of $R^{\rm op}$-modules has an $n$-flat cover. 
\item ${\rm dw}\,\widetilde{\mathsf{F}_n}$ is closed under pure submodules, pure quotients and pure extensions. 
\item $({}^{\perp_1}[{\rm dw}\,\widetilde{\mathsf{A}_n}],{\rm dw}\,\widetilde{\mathsf{A}_n})$ is a cotorsion pair in $\Rch$ cogenerated by a set. In particular, every complex of $R$-modules has a $n$-absolutely pure special preenvelope. 
\end{enumerate}
\end{proposition}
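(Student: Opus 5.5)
The plan is to transport the proof of Proposition \ref{prop:basic_properties} to $\Rch$ and $\chR$, using the degreewise descriptions ${\rm dw}\,\widetilde{\mathsf{A}_n}$ and ${\rm dw}\,\widetilde{\mathsf{F}_n}$ just obtained. Duality pairs of complexes are taken with respect to the character complex $(X_\bullet)^+ := \Hom_{\mathbb{Z}}(X_\bullet,\mathbb{Q/Z})$, whose degree $m$ component is $(X_{-m})^+$. Since this functor acts degreewise on components, $N_\bullet \in {\rm dw}\,\widetilde{\mathsf{F}_n}$ if and only if each $N_m \in \mathsf{F}_n$. By \cite[Lem. 5]{Lee} this holds if and only if each $(N_m)^+ \in \mathsf{A}_n$, that is, if and only if $(N_\bullet)^+ \in {\rm dw}\,\widetilde{\mathsf{A}_n}$.

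Next I check the closure properties degreewise. ${\rm dw}\,\widetilde{\mathsf{A}_n}$ is closed under direct summands and finite coproducts because $\mathsf{A}_n$ is. ${\rm dw}\,\widetilde{\mathsf{F}_n}$ is closed under coproducts and extensions because short exact sequences and coproducts of complexes are computed degreewise and $\mathsf{F}_n$ has these closure properties. For perfectness I must show that the projective generator of $\chR$, namely $\bigoplus_{m} D^m(R)$, lies in ${\rm dw}\,\widetilde{\mathsf{F}_n}$. This holds since its components are copies of $R \oplus R \in \mathsf{F}_n$. This gives (1).

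For (2) and (3) I invoke the Holm--J{\o}rgensen theory \cite[Thm. 3.1]{HJ09}, whose proof uses only purity and a deconstructibility/Kaplansky-type argument. The same arguments work in a locally finitely presented Grothendieck category with enough projectives such as $\chR$; alternatively one cites the complex versions in \cite{BGH14,GillespieDuality}. These give that ${\rm dw}\,\widetilde{\mathsf{F}_n}$ is closed under pure subcomplexes, pure quotients and pure extensions, that it is covering, and that $({\rm dw}\,\widetilde{\mathsf{F}_n},[{\rm dw}\,\widetilde{\mathsf{F}_n}]^{\perp_1})$ is a perfect cotorsion pair. As a cross-check for (3), a pure exact sequence of complexes is degreewise pure, so Proposition \ref{prop:basic_properties}(3) can be applied in each degree.

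For (4), ${\rm dw}\,\widetilde{\mathsf{A}_n} = (\mathsf{FP}^{\leq n}_{\rm Ch})^{\perp_1}$ by definition together with Proposition \ref{prop:characterization-nabs_complex}. Finitely presented complexes are bounded with finitely generated components, so $\mathsf{FP}^{\leq n}_{\rm Ch}$ is skeletally small, and I can take a set $\mathcal{S}$ of representatives. The Eklof--Trlifaj theorem holds in Grothendieck categories with enough projectives, as in Hovey's or Gillespie's versions for $\Rch$. Applying it shows that $({}^{\perp_1}[{\rm dw}\,\widetilde{\mathsf{A}_n}],{\rm dw}\,\widetilde{\mathsf{A}_n})$ is complete, which yields special preenvelopes.

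The main obstacle is justifying that the module-level results of \cite{HJ09} and \cite{ET01} apply in $\Rch$. I expect to handle it by citing the general versions for locally finitely presented categories and by relying on the fact that purity, products and coproducts in $\Rch$ are computed degreewise.
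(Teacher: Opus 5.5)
Your proposal is correct and is essentially the paper's argument. The paper also derives (2) and (3) from (1) via the chain-complex version of Holm--J{\o}rgensen's theorem (it cites \cite[Thm. 3.2]{Yang12}). For (1) it cites \cite{ZhaoPerez19} for the fact that degreewise extensions of perfect duality pairs of modules are perfect duality pairs of complexes, and your direct check via \cite[Lem. 5]{Lee} and $\bigoplus_m D^m(R)$ reproduces exactly that. The real differences are minor. For (4) the paper lifts the set-cogenerated pair of Proposition~\ref{prop:basic_properties}(4) degreewise using \cite[Prop. 4.4]{GillespieDegreewise}, whereas you apply Eklof--Trlifaj in $\chR$ directly to a set of representatives of $\mathsf{FP}^{\leq n}_{\rm Ch}$ through Proposition~\ref{prop:characterization-nabs_complex}, which is equally valid. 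Your ``cross-check'' for (3), using that pure exact sequences of complexes are degreewise pure, is in fact a complete and more elementary proof of that part.
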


\begin{proof}
Part (1) is a consequence of Proposition \ref{prop:basic_properties} (1) and \cite[Thm. 4.2.1 of the preprint version]{ZhaoPerez19}. Parts (2) and (3), on the other hand, follow from part (1) and \cite[Thm. 3.2]{Yang12}. Finally, we can obtain part (4) combining \cite[Prop. 4.4]{GillespieDegreewise} and Proposition \ref{prop:basic_properties} (4).
\end{proof}

\begin{remark}
Chain complex versions of \cite[Lem. 2.1, Thm. 2.1 (2) \& Prop. 3.1]{YangLiu11} are stated and proved in \cite[Prop. 3.5, Thm. 3.6 (2) \& Prop. 4.1]{SelvarajSaravanan16}. The observations in Remarks \ref{obs-initial}, \ref{rmk0}, \ref{obs-ideal} (1) concerning the proofs of these results given in \cite{YangLiu11} also apply to the proofs provided in \cite{SelvarajSaravanan16}. \newpage
\end{remark}

%%%%%%%%%%%%%%%%%%%%%%%%%%%%%%%%%%%%%
%%%%%%%%%%%%%%%%%%%%%%%%%%%%%%%%%%%%%
%%%%%%%%%%%%%%%%%%%%%%%%%%%%%%%%%%%%%
%%%%%%%%%%%%%%%%%%%%%%%%%%%%%%%%%%%%%

%\section{Some homotopical aspects from $n$-absolutely pure and $n$-flat modules}

%%%%%%%%%%%%%%%%%%%%%%%%%%%%%%%%%%%%%
%%%%%%%%%%%%%%%%%%%%%%%%%%%%%%%%%%%%%

%\subsection*{$n$-Absolutely pure and $n$-flat model structures}

%%%%%%%%%%%%%%%%%%%%%%%%%%%%%%%%%%%%%
%%%%%%%%%%%%%%%%%%%%%%%%%%%%%%%%%%%%%

%\subsection*{Induced model structures on chain complexes}

%%%%%%%%%%%%%%%%%%%%%%%%%%%%%%%%%%%%%
%%%%%%%%%%%%%%%%%%%%%%%%%%%%%%%%%%%%%

%\subsection*{Induced relative Gorenstein flat model structures}

%%%%%%%%%%%%%%%%%%%%%%%%%%%%%%%%%%%%%
%%%%%%%%%%%%%%%%%%%%%%%%%%%%%%%%%%%%%
%%%%%%%%%%%%%%%%%%%%%%%%%%%%%%%%%%%%%
%%%%%%%%%%%%%%%%%%%%%%%%%%%%%%%%%%%%%

%\section{Tracking citations}

%%%%%%%%%%%%%%%%%%%%%%%%%%%%%%%%%%%%%
%%%%%%%%%%%%%%%%%%%%%%%%%%%%%%%%%%%%%
%%%%%%%%%%%%%%%%%%%%%%%%%%%%%%%%%%%%%
%%%%%%%%%%%%%%%%%%%%%%%%%%%%%%%%%%%%%

\section*{Disclosure}

No AI models were used during the preparation of this paper. 

%%%%%%%%%%%%%%%%%%%%%%%%%%%%%%%%%%%%%
%%%%%%%%%%%%%%%%%%%%%%%%%%%%%%%%%%%%%
%%%%%%%%%%%%%%%%%%%%%%%%%%%%%%%%%%%%%
%%%%%%%%%%%%%%%%%%%%%%%%%%%%%%%%%%%%%

\section*{Funding}

The authors were supported by the following institutions:  ANII - Agencia Nacional de Investigación e Innovación, PEDECIBA - Programa de Desarrollo de las Ciencias Básicas, and CSIC - Comisión Sectorial de la Investigación Científica, Universidad de la República (via Apoyo a Proyectos de I+D, grant \# 22520240100305UD ``Condiciones de finitud en ideales de anillos'').

The second author was also supported by ANII via Math AMSUD project \# ANII-MOV-1012404 ``HHMA Homological and homotopical methods in algebra''.

%%%%%%%%%%%%%%%%%%%%%%%%%%%%%%%%%%%%%
%%%%%%%%%%%%%%%%%%%%%%%%%%%%%%%%%%%%%
%%%%%%%%%%%%%%%%%%%%%%%%%%%%%%%%%%%%%
%%%%%%%%%%%%%%%%%%%%%%%%%%%%%%%%%%%%%

\bibliographystyle{alpha}
\bibliography{biblio_almost}

\end{document}